\newcommand{\cyclic}{\mathop{\kern0.9ex{{+}\kern-2.10ex\raise-0.20
      ex\hbox{\Large\hbox{$\circlearrowright$}}}}\limits}
\newtheoremstyle{daniel}{3.0mm}{2mm}{\itshape}{}{\bfseries}{.}{1.5mm}{}
\theoremstyle{daniel}
\newtheorem{thm}{Theorem}[section]
\newtheorem{prop}[thm]{Proposition}
\newtheorem{cor}[thm]{Corollary}
\newtheorem{Exs}[thm]{Examples}
\newtheorem{Rems}[thm]{Remarks}
\newtheorem*{thm*}{Theorem}
\newtheorem*{cor*}{Corollary}
\newtheorem*{thm3.6}{Theorem 3.6}
\newtheorem*{thm4.3}{Theorem 4.3}
\newtheorem*{prop*}{Proposition}
\newtheorem*{Notation}{Notation}
\newtheorem{Lem}[thm]{Lemma}
\newtheorem{Def}[thm]{Definition}
\newtheorem{Rem}[thm]{Remark}
\newtheorem{Ex}[thm]{Example}
\newtheorem*{Setup}{Setup}
\newenvironment{rem}   {\begin{Rem}\em}{\end{Rem}}
\newenvironment{notation}   {\begin{Notation}\em}{\end{Notation}}
\def\cC{\mathcal C }
\def\cE{\mathcal E}
\def\cF{\mathcal F}
\def\cH{\mathcal H}
\def\cO{\mathcal O}
\def\cT{\mathcal T}
\def\cK{\mathcal K}
\def\gE{{\mathfrak E}}
\renewcommand{\P}{\mathbb{P}}
\newcommand{\Q}{\mathbb{Q}}
\newcommand{\R}{\mathbb{R}}
\def\codim{\mbox{codim}}
\DeclareMathOperator{\car}{char}
\def\ch{\mbox{ch}}
\DeclareMathOperator{\Ext}{Ext}
\def\Hom{\mbox{Hom}}
\DeclareMathOperator{\mult}{mult}
\DeclareMathOperator{\SchSupp}{SchSupp}
\def\Supp{\mbox{Supp}}
\def\Todd{\mbox{Todd}}
\DeclareMathOperator{\depth}{depth}
\DeclareMathOperator{\Amp}{Amp}
\DeclareMathOperator{\Pseff}{Pseff}
\DeclareMathOperator{\Nef}{Nef}
\DeclareMathOperator{\ev}{ev}
\DeclareMathOperator{\num}{num}
\DeclareMathOperator{\rk}{rk}
\DeclareMathOperator{\BSS}{\mathbf{UB}}
\numberwithin{equation}{section}
\begin{document}
\title[Uniform boundedness of semistable pure sheaves]{Uniform boundedness of semistable pure sheaves on projective manifolds}

\author{Mihai Pavel, Julius Ross, Matei Toma}

\address{Institute of Mathematics of the Romanian Academy,
P.O. Box 1-764, 014700 Bucharest, Romania
}
\email{cpavel@imar.ro}
\address{Department of Mathematics, Statistics, and Computer Science, University of Illinois at Chicago, 322 Science and Engineering Offices (M/C 249), 851 S. Morgan Street, Chicago, IL 60607
 }
\email{juliusro@uic.edu}
 
\address{ Universit\'e de Lorraine, CNRS, IECL, F-54000 Nancy, France
}

\email{Matei.Toma@univ-lorraine.fr}

\date{\today}
\keywords{semistable coherent sheaves}
\subjclass[2010]{14D20, 32G13}

\begin{abstract} We prove uniform boundedness statements for semistable pure sheaves on projective manifolds.  For example, we prove that the set of isomorphism classes of pure sheaves of dimension 2 that are slope semistable with respect to ample classes that vary in a compact set $K$ are bounded.  We also prove uniform boundedness for pure sheaves of higher dimension, but with restrictions on the compact set $K$.  As applications we get new statements about moduli spaces of semistable sheaves, and the wall-chamber structure that governs their variation.
\end{abstract}
\maketitle
\setcounter{tocdepth}{1}
\tableofcontents
\noindent


\section{Introduction}
When constructing moduli spaces of objects in algebraic geometry one of the most basic properties desired is \emph{boundedness} as this ensures that such moduli spaces will have certain finiteness properties (e.g.\ be of finite type) and, typically, ensuring such boundedness requires imposing some choice of stability condition.  

Once such moduli spaces have been constructed, it is natural to ask how they depend on this choice of stability.   In the best cases one can divide the space of stability choices into a (locally) finite number of chambers separated by walls such that the moduli spaces are unchanged within chambers, and undergo some transformation, or \emph{wall-crossing} as the choice of stability passes from one chamber to another.  

Typically obtaining such a finite chamber structure requires stronger bound\-edness results.   That is, one requires not just that the set of objects that are semistable with respect to one stability condition is bounded, but that this boundedness remains true uniformly as the stability condition varies within a compact set.

In this paper we address this stronger boundedness in the case of pure sheaves on projective manifolds.  The precise statements require some definitions (see Section \ref{sec:preparations} for more details).  On a projective manifold $X$ of dimension $n$ {over an algebraically closed field} $k$ we let $\Amp^{i}(X)$  denote the interior of the nef cone of $i$-codimensional cycles. Given $$\alpha_{d-1}\in \Amp^{d-1}(X)\text{ and }\alpha_d\in \Amp^d(X)$$ we define the slope of a purely $d$-dimensional sheaf $E$ on $X$ to be 
\[
    \mu^{\alpha_{d-1}}_{\alpha_d}(E):= \frac{\deg_{\alpha_{d-1}}(E)}{\deg_{\alpha_d}(E)},
\]
where the degree of $E$ with respect to a class $\beta \in \Amp^p(X)$ is $$\deg_{\beta}(E) : = \int_X \ch(E)\beta \Todd_X.$$
 When $h \in \Amp^1(X)$ is an ample class, this gives the classical slope function upon setting $(\alpha_d,\alpha_{d-1}) = (h^d, h^{d-1})$.

\begin{Def}[Semistability]
    We say that a $d$-dimensional sheaf $E$ on $X$ is {\em  semistable with respect to} $(\beta,\alpha)$ if it is pure 
    and for all pure $d$-dimensional quotients $E \to F$ we have
\[
  \mu^{\alpha}_\beta(F) \ge \mu^{\alpha}_\beta(E).
\]
\end{Def}

We recall that a set $\mathcal S$ of isomorphism classes of sheaves on $X$ is said to be \emph{bounded} if there is a scheme $S$ of finite type and a coherent $\mathcal O_{X\times S}$ sheaf $\mathcal E$ on $X\times S$ such that every sheaf in $\mathcal S$ is isomorphic to $\mathcal E|_{X\times \{s\}}$ for some $s\in S$ \cite[Definition 1.7.5]{HL}.

\begin{Def}[Uniform Boundedness]
Given a numerical class $\gamma \in K(X)_\text{num}$ of dimension $d > 0$ and a compact subset $K \subset \Amp^{d}(X) \times \Amp^{d-1}(X)$, by \emph{uniform boundedness} $\BSS$ we mean the following statement:

\vspace{0.5em}
$\BSS(K,\gamma)$:  The set of  isomorphism classes of coherent sheaves $E$ of numerical type $\gamma$ on $X$ that are semistable with respect to some $(\beta,\alpha)$ in $K$ is \textit{bounded}. 
\end{Def}

As discussed above, the case of a singleton $K = \{(\beta,\alpha)\}$ is important to ensure the existence of \textit{finite-type} moduli stacks of pure sheaves, and provides one of the key ingredients in the construction of proper moduli spaces of sheaves. On the other hand, for studying the variation of such moduli spaces with respect to the change of semistability, it is crucial to allow $(\beta,\alpha)$ to vary uniformly in a given compact set $K$. The statement $\BSS$ is already known for one-dimensional pure sheaves (see \cite[Proposition 3.6]{GRT2preprint} and \cite[Proposition 7.19]{joyce2021enumerative}). The first open case we treat here is that of two-dimensional pure sheaves, for which we prove the following general result:

\begin{thm}[$\subset$ Corollary \ref{cor:UniformBoundednessD=2}, Uniform boundedness for pure 2-dimensional sheaves]\label{thm:introD=2}
The statement $\BSS(K,\gamma)$ holds for arbitrary $2$-dimensional numerical classes $\gamma$ and any compact $K \subset \Amp^2(X) \times \Amp^1(X)$. 
\end{thm}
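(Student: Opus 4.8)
The plan is to deduce $\BSS(K,\gamma)$ from a single uniform slope estimate via the standard boundedness criterion. Fix once and for all an ample class $h\in\Amp^1(X)$ and write $\mu_h:=\mu^{h}_{h^2}$ for the associated classical slope on purely $2$-dimensional sheaves. Recall (cf.\ \cite{HL}) that a family of pure sheaves of fixed numerical type on $X$ is bounded as soon as the maximal slope $\mu_{h,\max}(E)$ of the Harder--Narasimhan filtration with respect to $\mu_h$ is uniformly bounded above: such a bound feeds into the Le Potier--Simpson estimate to control $h^0(E(m))$ for one fixed large $m$, hence the Castelnuovo--Mumford regularity, and Kleiman's criterion then yields boundedness. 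Since $\gamma$ fixes $\ch(E)$ numerically and $K$ is compact, it therefore suffices to produce a constant $C=C(X,h,\gamma,K)$ with
\[
\mu_{h,\max}(E)\le C
\]
for every $E$ of type $\gamma$ that is semistable with respect to some $(\beta,\alpha)\in K$.

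For the key estimate, let $E'\subseteq E$ be the maximal $\mu_h$-destabilizing subsheaf, so that $E'$ is saturated and $\mu_h$-semistable with $\mu_h(E')=\mu_{h,\max}(E)$. Because $\gamma$ fixes the leading term $\ch_{n-2}(E)$, the denominator $\deg_\beta(E')=\int_X\ch_{n-2}(E')\beta+\cdots$ stays in a fixed positive range as $\beta$ ranges over the compact projection of $K$; thus $\mu_h(E')$ is large precisely when the $1$-cycle class $\ch_{n-1}(E')$ pairs largely with $h$. On the other hand, $(\beta,\alpha)$-semistability gives $\mu^{\alpha}_{\beta}(E')\le\mu^{\alpha}_{\beta}(E)$, whose right-hand side is bounded uniformly over $K$ (again because $\gamma$ fixes $\ch(E)$ and $\deg_\beta(E)>0$ is bounded away from $0$); unwinding the definitions, this bounds $\int_X\ch_{n-1}(E')\alpha$ from above, uniformly. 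The problem thus reduces to the comparison: for the curve class $c:=\ch_{n-1}(E')$ supported on $S:=\Supp(E)$, an upper bound on $c\cdot\alpha$ must force an upper bound on $c\cdot h$.

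This comparison is exactly where $2$-dimensionality enters, through the Hodge index theorem. Passing to the support surface $S$, the relevant quantity is the normalized difference of first Chern classes $\xi\in\NS(S)_\R$ (the classes $\ch_{n-1}(E')$, $\ch_{n-1}(E)$ divided by their respective generalized ranks), for which $(\beta,\alpha)$-semistability reads $\xi\cdot\alpha\le C'$ while the Bogomolov--Gieseker inequality for the $\mu_h$-semistable sheaf $E'$ bounds its discriminant and hence gives a lower bound $\xi^2\ge -C_0$. Since $\NS(S)_\R$ carries an intersection form of signature $(1,\rho-1)$ and $\alpha|_S$ is positive, writing $\xi=t\,\alpha+\eta$ with $\eta\cdot\alpha=0$ shows that $\xi\cdot\alpha\le C'$ bounds $t$ from above, while the negative-definiteness of $\alpha^\perp$ together with $\xi^2\ge -C_0$ bounds the remaining contribution: an elementary Cauchy--Schwarz estimate in $\alpha^\perp$ controls $|\eta\cdot h|$ linearly in $|t|$ with a coefficient strictly smaller than the linear gain $|t(\alpha\cdot h)|$, so that $\xi\cdot h\le C_1$. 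This yields $c\cdot h\le C_2$ and hence the bound on $\mu_{h,\max}(E)$ required above. The same input fails in higher dimension, where pairing a codimension-one class with several independent ample classes on the support no longer enjoys the signature $(1,\ast)$ property, which is precisely why for $d\ge 3$ one must constrain $K$.

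The main obstacle is making this positivity argument genuinely uniform, since both the support surface $S$ and its Néron--Severi lattice vary with $E$. I expect the crux to be twofold. First, one must establish the discriminant lower bound $\xi^2\ge -C_0$ with $C_0$ depending only on $\gamma$ and not on the individual $E$; this requires a Bogomolov-type inequality valid for the pure $2$-dimensional, possibly non-locally-free, sheaves occurring here, together with uniform control of the lower-order $\Todd_X$-corrections hidden in $\deg_\alpha$ and $\deg_\beta$ and a reconciliation of the $\beta$-rank and $h^2$-rank normalizations (which differ by a uniformly bounded factor). Second, one must phrase the Hodge index inequality intrinsically on $X$, for instance as an inequality among the fixed intersection numbers determined by $[S]=\ch_{n-2}(E)$ and the classes $h,\alpha$ ranging over compact sets, so that the constants $C_0,C_1,C_2$ are manifestly uniform over $K$ and over the family. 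Once these are in place, the reduction of the first paragraph closes the proof.
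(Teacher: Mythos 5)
There is a genuine gap, and it is exactly the one the paper is designed to circumvent. Your opening reduction --- bounding $\mu_{h,\max}(E)$ uniformly implies boundedness of the family --- is standard and fine (cf.\ \cite{Langer,HL}). But your key estimate runs the Bogomolov--Gieseker inequality plus the Hodge Index Theorem on the support surface $S=\Supp(E)$, and for \emph{pure} sheaves this machinery is not available: $S$ may be reducible, non-reduced, and arbitrarily singular, so there is no intersection form of signature $(1,\rho-1)$ on $\NS(S)_\R$ in which to perform your orthogonal decomposition $\xi=t\,\alpha+\eta$ and Cauchy--Schwarz estimate (the Hodge Index Theorem presupposes at least an integral surface, and its standard use a smooth or normal one). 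Likewise, the discriminant bound $\xi^2\ge -C_0$ you need is a Bogomolov-type inequality for the $\mu_h$-semistable sheaf $E'$ on this singular, possibly non-integral support; such an inequality is not known in this generality and is the subject of current research (see \cite{wu2023bogomolovs, langer2023bridgeland}). This is precisely why the authors state that the Matsuki--Wentworth and Greb--Ross--Toma strategy \cite{MatsukiWentworth, GRT1} --- which is the strategy you propose --- does not extend from torsion-free sheaves on smooth varieties to pure sheaves. Your final paragraph names both issues but defers them as uniformity technicalities to be ``put in place''; they are in fact the central obstruction, and no argument for them is given. Note also that a BG-based proof would at best work in characteristic zero, whereas the theorem is proved in the paper in arbitrary characteristic.

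For comparison, the paper's proof (Theorem \ref{thm:boundedness2Dim}) avoids both inputs by a Maruyama-style induction on the multiplicity $r$: one reduces to reflexive $(S_2)$-sheaves, handles $r=1$ by observing that multiplicity-one pure sheaves are slope-semistable for \emph{every} polarization (so classical boundedness applies), and in the induction step produces, via uniform non-vanishing of $H^0(X,E(m))$ and boundedness of the support cycles (Lemma \ref{lem:boundedSupp}), a saturated rank-one subsheaf $L_E\subset E(m)$ whose scheme-theoretic support is integral. A Bogomolov-type bound is invoked only there (Lemma \ref{lem:rank1inequality}), for rank-one sheaves on an integral surface, where semistability is automatic and the inequality follows by pushing forward along a finite projection to $\P^{2}$ \cite{Maruyama81boundedness, Pavel2022Thesis}. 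The quotient $E_1=E(m)/L_E$ is then shown to be $(a_1,K)$-semistable for a controlled $a_1$ and is bounded by induction. If you want to salvage your approach, you would first have to prove a uniform Bogomolov inequality and a Hodge-index-type statement for the singular, non-integral supports arising here --- which is an open problem, not a step one can assume.
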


Next we consider uniform boundedness for higher dimensional pure sheaves. Our approach involves the use of a generalized Grauert-Mülich-Spindler Theorem (see Section \ref{section:GrauertMuelich}). We obtain the following uniform boundedness result:

\begin{thm}[= Corollary \ref{cor:higherDimension}, Uniform boundedness of pure sheaves]\label{thm:boundednessHD:intro}
Assume $\car(k)=0$, let $d \ge 2$ and let $h_1,\ldots,h_{d-2} \in \Amp^1(X)$ be rational ample divisor classes.  Suppose also $K_1 \subset \Amp^1(X)$ and $K_d \subset \Amp^d(X)$ are compact sets, and define 
 $$K_{d-1} = K_1 h_1 \cdots h_{d-2} \subset \Amp^{d-1}(X).$$
Then the statement $\BSS(K_d \times K_{d-1},\gamma)$ holds for any $d$-dimensional numerical class $\gamma$.
\end{thm}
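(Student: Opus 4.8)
The plan is to reduce the $d$-dimensional problem to the two-dimensional case already settled in Theorem~\ref{thm:introD=2} by restricting to a general complete intersection surface cut out by the fixed classes $h_1,\dots,h_{d-2}$, using the generalized Grauert--M\"ulich--Spindler theorem of Section~\ref{section:GrauertMuelich} to transport semistability across the restriction. The starting point is a boundedness criterion for pure sheaves: since the numerical type $\gamma$ is fixed, all Chern-character data, and in particular the discriminant-type invariants, of every $E$ in our family are fixed, so it is enough to produce a uniform upper bound on the maximal Harder--Narasimhan slope $\hat\mu_{\max}(E)$ measured against a single auxiliary multi-polarization. I would take this polarization of the shape $(b_0 h_1\cdots h_{d-2},\,h_0 h_1\cdots h_{d-2})$ with $h_0\in\Amp^1(X)$ and $b_0\in\Amp^2(X)$ fixed, precisely so that both of its classes factor through $h_1\cdots h_{d-2}$ and hence descend to the surface constructed below.

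First I would clear denominators, choosing integers $m_i$ with $L_i:=m_i h_i$ very ample, and set $S:=D_1\cap\cdots\cap D_{d-2}$ for general $D_i\in|L_i|$. By Bertini $S$ is smooth of dimension $n-d+2$, and for general $S$ the restriction $E_S:=E|_S$ is pure of dimension $2$ and Tor-independent of $S$, so that $\ch(E_S)=i^*\ch(E)$. The crucial numerical observation is that the slope for a pair $(\beta,\alpha)$ with $\alpha=h\,h_1\cdots h_{d-2}$ splits into a harmless denominator and a restrictable numerator. Indeed the denominator $\deg_\beta(E')=\cycle(E')\cdot\beta$ is simply the weighted support cycle paired with $\beta$, so for subsheaves $E'\subseteq E$ of fixed $\gamma$ it is comparable, up to uniform constants over the compact $K_d$, to $\deg_{b_0 h_1\cdots h_{d-2}}(E')$; while the leading part of the numerator $\deg_\alpha(E')$ equals, up to the scalar $\prod m_i$ and lower-order $\Todd$ corrections controlled by the fixed support cycle, the $h|_S$-degree of $E'_S$ on $S$. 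Thus $(\beta,\alpha)$-slopes on $X$ correspond to $(b_0|_S,h|_S)$-slopes on $S$, and as $(\beta,\alpha)$ ranges over $K_d\times K_{d-1}$ the induced classes $(b_0|_S,h|_S)$ range over a compact subset of $\Amp^2(S)\times\Amp^1(S)$.

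Next I would invoke the generalized Grauert--M\"ulich--Spindler theorem to convert $(\beta,\alpha)$-semistability of $E$ into control of the restriction: for general $S$ it confines the $h|_S$-Harder--Narasimhan slopes of $E_S$ to an interval whose length is bounded independently of $E$ and of $(\beta,\alpha)\in K_d\times K_{d-1}$ (this is the step where the hypothesis $\car(k)=0$ enters). Consequently $E_S$ admits a Harder--Narasimhan filtration with a bounded number of semistable factors, each of one of finitely many numerical types on $S$; applying Theorem~\ref{thm:introD=2} to these factors and closing up under extensions shows that the family $\{E_S\}$ is bounded on $S$. In particular all slopes of $E_S$, and hence through the restriction correspondence the relevant slopes of $E$ on $X$, are uniformly bounded, which feeds back the upper bound on $\hat\mu_{\max}(E)$ demanded by the boundedness criterion.

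The main obstacle is the interface between these three ingredients, and it is genuinely where the work lies. The Grauert--M\"ulich--Spindler estimate must be uniform not only over the compact set $K_d\times K_{d-1}$ but also over the family of admissible complete intersections $S$, and the restriction must be taken generically enough to preserve purity and Tor-independence while keeping every constant uniform. Equally delicate is the final descent: boundedness of the restricted family together with the slope correspondence has to be assembled into a bound for the maximal slope of $E$ itself with respect to the fixed multi-polarization, which is exactly the step where one must know that the auxiliary boundedness criterion is valid for slopes defined by such products of ample classes rather than by a single ample divisor. Establishing that criterion in the required generality, and verifying that the denominator really is harmless uniformly in $\gamma$, are the points I expect to demand the most care.
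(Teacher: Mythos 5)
Your overall strategy --- restrict using the generalized Grauert--M\"ulich--Spindler theorem, invoke the two-dimensional result on the restriction, and feed a maximal-slope bound back into a boundedness criterion --- is the same as the paper's, but two of your specific steps have genuine gaps, and both occur exactly where the paper's actual argument (an induction on $d$, restricting one general divisor at a time) is structured to avoid them. First, your ``starting point'' is that it suffices to bound $\hat\mu_{\max}(E)$ against the multi-polarization $(b_0h_1\cdots h_{d-2},\,h_0h_1\cdots h_{d-2})$. No such criterion is available for pure sheaves: the Grothendieck--Kleiman--Langer criterion the paper ultimately cites requires a bound on $\mu_{\max}$ with respect to a \emph{single} ample class, i.e.\ on slopes of the form $\deg_{\tilde h^{d-1}}/\deg_{\tilde h^{d}}$, and converting a multi-polarization slope bound into a single-class slope bound uniformly over all saturated subsheaves is precisely the kind of Hodge-index-type comparison that the introduction notes is not known for pure sheaves. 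You flag this point yourself, but it is not a technicality to be checked later: establishing that criterion is essentially as hard as the theorem. The paper sidesteps it by restricting only one divisor $D_1\in|H_1|$ per step; the inductive hypothesis then yields \emph{boundedness} (not merely a slope bound) of the family of restrictions, hence a bound on $\mu^{h_1^{d-2}}_{h_1^{d-1},\max}(E|_{D_1})$, and the restriction identity $\mu^{h_1^{d-2}}_{h_1^{d-1}}(F|_{D_1})=\mu^{h_1^{d-1}}_{h_1^{d}}(F)-\tfrac12$ applied to saturated subsheaves $F\subset E$ bounds $\mu^{h_1^{d-1}}_{h_1^{d},\max}(E)$, a genuine single-class slope to which the standard criterion applies.

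Second, your handling of the restricted sheaves on $S$ is circular. Since GMS only yields approximate semistability of $E_S$, you pass to its Harder--Narasimhan factors, assert that they belong to ``finitely many numerical types'', and close up under extensions. That finiteness does not follow from bounded slopes and multiplicities: pinning down the numerical classes (e.g.\ the $\theta_1$-classes) of the HN subsheaves of $E_S$ uniformly requires Grothendieck-type finiteness applied to a family that is already known to be bounded, which is exactly what you are trying to prove. The paper resolves this by proving its two-dimensional statement from the outset for $(a,K)$-semistable sheaves, i.e.\ allowing a uniform semistability defect $a$ (Theorem \ref{thm:boundedness2Dim} and Corollary \ref{cor:UniformBoundednessD=2}, which are strictly stronger than the statement of Theorem \ref{thm:introD=2} you invoke), precisely so that it can be applied \emph{directly} to the approximately semistable restriction, with no decomposition into HN factors. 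If you replace your use of Theorem \ref{thm:introD=2} by that stronger $(a,K)$-version, and climb back up one divisor at a time as described above (each descent step also needs Lemma \ref{lem:changingDegs} to renormalize the denominator class before reapplying Corollary \ref{cor:restrictionOfType}), your argument becomes correct --- and is then precisely the paper's induction, unrolled.
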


This allows us connect two ``complete intersection'' stability conditions by a compact set (in fact a collection of segments) and prove uniform boundedness with respect to this compact set (see Figure \ref{fig:linesofboundedstabilityparameters} for an illustration in the case $d=4$).

\begin{cor}[= Corollary \ref{cor:boundednessSegments}]\label{cor:boundednessSegments:intro}
Assume $\car(k)=0$, let $\gamma$ be a numerical class of $d$-dimensional coherent sheaves on $X$, let $\alpha, \alpha' \in \Amp^1(X)$ and let $h_1,\ldots,h_{d-2},h_1',\ldots,h'_{d-2} \in \Amp^1(X)$ be rational ample classes. Then there is a connected compact subset $K_{d-1} \subset \Amp^{d-1}(X)$ containing $\alpha h_1\cdots h_{d-2}$ and $\alpha' h'_1\cdots h'_{d-2}$ such that for any non-empty compact set $K_d \subset \Amp^{d}(X)$, the statement 
$\BSS(K_{d} \times K_{d-1} ,\gamma)$
is true. 
\end{cor}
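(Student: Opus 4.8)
The plan is to reduce the statement to Theorem~\ref{thm:boundednessHD:intro} by exhibiting $K_{d-1}$ as a finite union of ``complete-intersection segments'', along each of which that theorem already applies, and then invoking the fact that a finite union of bounded families is again bounded. Recall that Theorem~\ref{thm:boundednessHD:intro} yields uniform boundedness over sets of the shape $K_1 h_1\cdots h_{d-2}$ with $K_1\subset\Amp^1(X)$ compact and $h_1,\dots,h_{d-2}$ \emph{rational} ample. For $\beta,\beta'\in\Amp^1(X)$ write $[\beta,\beta']=\{t\beta+(1-t)\beta' : t\in[0,1]\}$; since $\Amp^1(X)$ is an open convex cone this is a compact subset of $\Amp^1(X)$, and multiplying it by a fixed rational ample $(d-2)$-tuple produces a compact segment inside $\Amp^{d-1}(X)$, exactly the containment asserted in Theorem~\ref{thm:boundednessHD:intro}.

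I would build a polygonal path from $\alpha h_1\cdots h_{d-2}$ to $\alpha' h_1'\cdots h_{d-2}'$ whose edges are segments of this type. First fix any rational ample class $g\in\Amp^1(X)$ (possible since rational classes are dense in the cone). Take as a first edge $[\alpha,g]\,h_1\cdots h_{d-2}$, joining $\alpha h_1\cdots h_{d-2}$ to $g h_1\cdots h_{d-2}$: here the free factor is the $\Amp^1$-interval $[\alpha,g]$ and the fixed tuple $(h_1,\dots,h_{d-2})$ is rational, so Theorem~\ref{thm:boundednessHD:intro} applies with $K_d$ arbitrary. At the point $g h_1\cdots h_{d-2}$ every factor is now rational, so I can change the $h_i$ to the $h_i'$ one at a time: the $i$-th edge varies the single factor over $[h_i,h_i']$ while keeping the remaining (rational) factors $g,h_1',\dots,h_{i-1}',h_{i+1},\dots,h_{d-2}$ fixed, and since all fixed factors are rational each such edge is again of the required form. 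After these $d-2$ edges we reach $g h_1'\cdots h_{d-2}'$, and a final edge $[g,\alpha']\,h_1'\cdots h_{d-2}'$ reaches $\alpha' h_1'\cdots h_{d-2}'$, again with rational fixed tuple.

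Set $K_{d-1}$ equal to the union of these finitely many edges. It is a finite union of compact segments, consecutive ones sharing an endpoint, hence compact and connected; it lies in $\Amp^{d-1}(X)$ and contains the two prescribed points. Now fix any non-empty compact $K_d\subset\Amp^d(X)$. A sheaf of type $\gamma$ that is semistable with respect to some $(\beta,\alpha_{d-1})\in K_d\times K_{d-1}$ has $\alpha_{d-1}$ lying on one of the finitely many edges $S_j$, so it belongs to the family controlled by $\BSS(K_d\times S_j,\gamma)$; each such family is bounded by Theorem~\ref{thm:boundednessHD:intro}, and a finite union of bounded families is bounded, which gives $\BSS(K_d\times K_{d-1},\gamma)$.

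The one real subtlety — and the reason the intermediate rational class $g$ is introduced — is the rationality hypothesis on the fixed factors $h_1,\dots,h_{d-2}$ in Theorem~\ref{thm:boundednessHD:intro}: the endpoints $\alpha,\alpha'$ may be irrational, so they cannot be used as fixed factors and cannot be ``frozen'' while some $h_i$ is varied. Sliding the free $\Amp^1$-factor first to the rational class $g$ removes the irrational class from the configuration, after which one is free to choose which (now rational) factor plays the role of $K_1$ at each step. Apart from this, one only needs to confirm that each product of an $\Amp^1$-interval with a rational ample $(d-2)$-tuple lands in the interior $\Amp^{d-1}(X)$ of the nef cone, which is again guaranteed by the formulation of Theorem~\ref{thm:boundednessHD:intro}.
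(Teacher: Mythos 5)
Your proposal is correct and takes essentially the same route as the paper: the paper's proof also fixes an auxiliary rational ample class $h$ (your $g$), forms the same polygonal chain of segments — first $[\alpha,h]\,h_1\cdots h_{d-2}$, then swapping each $h_i$ for $h_i'$ one at a time with all fixed factors rational, then $[h,\alpha']\,h_1'\cdots h_{d-2}'$ — and applies Theorem \ref{thm:boundednessHD:intro} to each edge before concluding by taking the finite union. Your remark on why the intermediate rational class is needed (the endpoints $\alpha,\alpha'$ cannot serve as fixed factors) is exactly the point of the paper's construction.
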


\begin{figure}
\centering
\begin{tikzpicture}[x=1cm,y=1cm]
\fill (0,0) circle(2pt);
\fill (2,-2) circle(2pt);
\fill (4,1) circle(2pt);
\fill (6,-2) circle(2pt);
\fill (8,0) circle(2pt);
\draw [thick] (0,0)--(2,-2);
\draw [thick] (2,-2)--(4,1);
\draw [thick] (6,-2)--(4,1);
\draw [thick] (6,-2)--(8,0);
\node[above] at (0,0){$\alpha h_1 h_2$};
\node[below] at (2,-2){$h h_1 h_2$};
\node[above] at (4,1){$h h_1' h_2$};
\node[below] at (6,-2){$h h_1' h_2'$};
\node[above] at (8,0){$\alpha' h_1' h_2'$};

\end{tikzpicture}
\caption{Illustration of bounded stability parameter set in $\Amp^3(X)$ joining $\alpha h_1h_2$ and $\alpha' h_1'h_2'$}
\label{fig:linesofboundedstabilityparameters}
\end{figure}
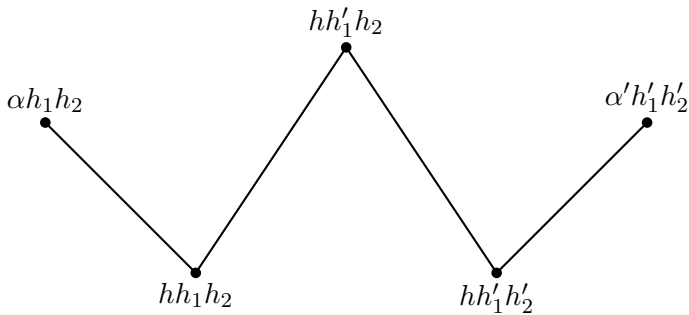

As per the original motivation, these boundedness results allow for the construction of (new) moduli spaces of semistable pure sheaves, and a finite chamber structure that describes how these moduli spaces change as this semistability condition varies.  We refer the reader to Section \ref{sec:corollaries} for details.

\subsection*{Comparison with Other Work: } 
Boundedness of semistable sheaves was intensively studied in the context of classical slope-semistability, and it took the efforts of many mathematicians to completely settle the case $K = \{(h^d, h^{d-1})\}$ with $h \in \Amp^1(X)$ rational (e.g. \cite{atiyah57vector,KleimanSga6, takemoto1972stable, Maruyama81boundedness, Gieseker, Langer}).

Statements about uniform boundedness are more recent, and were obtained for torsion-free sheaves on smooth surfaces by Matsuki-Wentworth \cite{MatsukiWentworth} and for torsion-free sheaves on higher dimensional base by Greb-Ross-Toma (see \cite{GrebToma, GRT1}). In both cases the approach is based on two main ingredients: the Bogomolov-Gieseker inequality and the Hodge Index Theorem, which are both valid when the underlying base space is smooth. To date, it remains uncertain whether these techniques are applicable in the general setting of pure sheaves, whose support scheme might have very bad singularities (see \cite{wu2023bogomolovs, langer2023bridgeland} for recent progress in this direction).

Instead, we adopt a different strategy to deal with the uniform boundedness problem in pure dimension two, inspired by Maruyama's work \cite{Maruyama75boundedness, Maruyama81boundedness}. This way we establish the two-dimensional $\BSS$ hypothesis in its broadest scope, valid in arbitrary characteristic. 

These results have applications to enumerative geometry through Joyce's work \cite{joyce2021enumerative}, in particular to counts of pure two-dimensional sheaves on a Fano threefold.  Specifically Corollary \ref{cor:UniformBoundednessD=2} gives a positive answer to \cite[Problem 7.21]{joyce2021enumerative}.

Our approach in higher dimensions is based on an analogue of the Grauert-Mülich-Spindler Theorem, which generalizes the classical result (see \cite[Theorem 3.1.2]{HL}) to the case of pure sheaves. This is likely to be of independent interest, with the caveat that it only holds in zero characteristic. We emphasise that we obtain here the first uniform boundedness results for pure sheaves in higher dimensions. It remains open if our results are also valid in positive characteristic. 

As already noted, several applications of our results to the moduli theory of sheaves are presented in Section \ref{sec:corollaries}. They are closely related to the work in \cite{MegyPavelToma}, which proposes a general framework for studying wall-crossing phenomena in higher dimensions.

\subsection*{Acknowlegements: } JR is partially supported by the National Science Foundation under Grant No. DMS-1749447. MP and MT acknowledge financial support from  IRN ECO-Maths. MP was also partly supported by the PNRR grant CF 44/14.11.2022 \textit{Cohomological Hall algebras
of smooth surfaces and applications}.

\section{Preparations}\label{sec:preparations}
We start by fixing some notations and conventions. Throughout the paper $X$ will be a smooth projective variety of dimension $n$ over an algebraically closed field.

For $0\le p\le n$ we will denote by $N^p(X)_\R$ the numerical group of real codimension $p$ cycles on $X$, by $\Nef^p(X)$ the cone in $N^p(X)_\R$ dual to the pseudoeffective cone $\Pseff^{n-p}(X)$,   and by $\Amp^{p}(X)$ the interior of $\Nef^p(X)$. 
The cones $\Amp^p(X)$ are non-empty for each $p\in\{0,\ldots,n\}$.
The elements of $\Amp^{p}(X)$ will be called {\em ample $p$-classes}. For  $\alpha \in \Amp^p(X)$ we define a degree function
\[ 
	\deg_{\alpha} : K(X) \to \R, \ [F]\mapsto \int_X \ch(F)\alpha \Todd_X,
\]
where $K(X)$ is the Grothendieck group of coherent sheaves on 
$X$.

We also fix a very ample line bundle $\mathcal O_X(1)$ on $X$ and put $h\in \Amp^{1}(X)$  the class of a  member of the linear system $|\mathcal O_X(1)|$.

Let $\alpha_{d} \in \Amp^{d}(X)$ and $\alpha_{d-1} \in \Amp^{d-1}(X)$. Let $E$ be a purely $d$-dimensional coherent sheaf on $X$. We define its $(\alpha_d,\alpha_{d-1})$-slope by
\[
    \mu^{\alpha_{d-1}}_{\alpha_d}(E):= \frac{\deg_{\alpha_{d-1}}(E)}{\deg_{\alpha_d}(E)}.
\]
For simplicity, we write $\mu^{\alpha_{d-1}} := \mu^{\alpha_{d-1}}_{h^d}$ and $\mu := \mu^{h^{d-1}}_{h^d}$. 

For any coherent sheaf $E$ on $X$, we set 
\[
    \theta_j(E) = \sum_{i=j}^n \ch_{n-i}(E)\Todd_{i-j}(X)
\]
for $0 \le j \le n$. We will consider the $\theta_j(E)$ as numerical classes, i.e. in the groups $N^{n-j}(X)_\Q$. Note that $\theta_j(E) = 0$ if $j > \dim(E)$ and that $\theta_j(E)$ is the class a positive  $j$-dimensional cycle if $j = \dim(E)$ (and $E\ne0$). With this notation, we see that for $\omega \in \Amp^p(X)$ 
\[
    \deg_{\omega} = \int_X \omega \cdot \theta_p.
\]
In particular if $p=\dim(E)\ge0$, we have $\deg_{\omega}(E)>0$.

By the {\em numerical class} (or {\em numerical type})  of a coherent sheaf $E$ on $X$ we understand its class in the numerical Grothendieck group $K(X)_{\num}$ of coherent sheaves on $X$, cf. \cite[Section 8.1]{HL}. It is known that the Chern character gives an isomorphism $\ch: K(X)_\Q\to A(X)_\Q$ between the rational Grothendieck group and the rational Chow group, \cite[Example 15.2.16]{FultonIT}, and that this isomorphism passes to the numerical groups $\ch:K(X)_{\Q \num}\to N^*(X)_\Q$, \cite[Example 19.1.5]{FultonIT}. Thus the numerical class of a coherent sheaf is perfectly determined by its numerical Chern character and thus also by its $\theta_j$-classes.

If $F$ is a coherent subsheaf of a coherent sheaf $E$ on $X$ we recall that the {\it saturation of $F$ in $E$} is the minimal coherent subsheaf $F'$ of $E$ containing $F$ such that $E/F'$ is zero or pure of dimension $\dim E$, cf. \cite[Definition 1.1.5]{HL}.

If $E$ is a coherent sheaf  on $X$ and $s$ is a positive integer, we shall denote by $N_s(E)$ the maximal coherent subsheaf of $E$ of dimension less than $s$ and write  
    $E_{(s)}:=E/N_s(E)$.

We now introduce a semistability notion with respect to classes which are allowed to vary in a compact set. 

\begin{Def}\label{def:stability} 
    Let $K \subset \Amp^{d}(X) \times \Amp^{d-1}(X)$ be compact, and let $a \in \R$. We say that a $d$-dimensional sheaf $E$ on $X$ is {\em  $(a,K)$-semistable} if it is pure and there is some $(\beta,\alpha) \in K$ such that for all pure $d$-dimensional quotients $E \to F$ we have
\[
  \mu^{\alpha}_\beta(F) \ge \mu^{\alpha}_\beta(E) - a.
\]
For $(\beta,\alpha) \in\Amp^{d}(X) \times \Amp^{d-1}(X)$  we shall simply  say that $E$ is {\em  $(a,\beta,\alpha)$-semistable} in this case. Coherent sheaves on $X$ 
which are $(0,\beta,\alpha)$-semistable will be also called {\em  $(\beta,\alpha)$-slope-semistable} or {\em slope-semistable with respect to $(\beta,\alpha)$}.
\end{Def}

\begin{Lem}\label{lem:changingDegs}
Let $K = K_{d} \times K_{d-1}$, $K' = K_{d}'\times K_{d-1}$, where $K_{d-1}$ is a compact subset of $\Amp^{d-1}(X)$ and $K_d, K'_d$ are compact subsets of $\Amp^d(X)$. Fix $a \in \R$ and $\theta_d \in N^{n-d}(X)_\Q$, $\theta_{d-1} \in N^{n-d+1}(X)_\Q$. There exists some $a' \in \R$ such that as soon as $E$ is an $(a,K)$-semistable $d$-dimensional sheaf on $X$  with $\theta_d(E) =\theta_d$ and $\theta_{d-1}(E) = \theta_{d-1}$, then $E$ is also  $(a',K')$-semistable.
\end{Lem}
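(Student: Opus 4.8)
The plan is to exploit that in passing from $K$ to $K'$ only the $d$-dimensional ample class (the denominator of the slope) changes, while the $(d-1)$-dimensional class (the numerator) is drawn from the \emph{same} compact set $K_{d-1}$. So suppose $E$ is $(a,K)$-semistable, witnessed by a pair $(\beta,\alpha) \in K_d \times K_{d-1}$, and fix some $\beta' \in K_d'$. Keeping the same $\alpha$, I would show that $E$ is $(a',\beta',\alpha)$-semistable for a constant $a'$ independent of $E$. For a nonzero pure $d$-dimensional quotient $E \to F$, the numerator $\deg_\alpha(F) = \int_X \alpha\cdot\theta_{d-1}(F)$ is identical in both slopes, and only the positive denominators $\deg_\beta(F)$ and $\deg_{\beta'}(F)$ differ; the argument thus reduces to controlling these denominators and the numerator separately.

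First I would bound the denominators uniformly. Since $F$ is a quotient of $E$, its support cycle is dominated by that of $E$: from $0 \to G \to E \to F \to 0$ and additivity of $\theta_d$ one gets $\theta_d(F) = \theta_d - \theta_d(G)$ with $\theta_d(G)$ effective, so $\cycle(F)$ ranges over the finite set of effective $d$-cycles dominated by $\cycle(E)$, and hence $\theta_d(F) = [\cycle(F)]$ over a finite set of numerical classes. As $\beta,\beta'$ vary in the compact sets $K_d, K_d'$ and pair strictly positively with nonzero effective $d$-cycles (ample $d$-classes lie in the interior of the cone dual to the pseudoeffective cone), the degrees $\deg_\beta(F)$ and $\deg_{\beta'}(F)$ are bounded above and bounded away from $0$ by positive constants depending only on $K_d, K_d', \theta_d$.

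Next I would bound the numerator $\deg_\alpha(F)$ from below, and this is where semistability enters. Rewriting $\mu^\alpha_\beta(F) \ge \mu^\alpha_\beta(E) - a$ as $\deg_\alpha(F) \ge \deg_\beta(F)\big(\mu^\alpha_\beta(E) - a\big)$ and using that $\deg_\beta(F)$ is bounded while $\mu^\alpha_\beta(E) = \int_X \alpha\cdot\theta_{d-1}\big/\int_X \beta\cdot\theta_d$ is bounded over $(\beta,\alpha) \in K_d\times K_{d-1}$, I obtain a uniform lower bound $\deg_\alpha(F) \ge -M$ with $M$ independent of $E$ and $F$. Combining the two bounds gives $\mu^\alpha_{\beta'}(F) = \deg_\alpha(F)/\deg_{\beta'}(F) \ge -M/c$ for a positive constant $c$, whereas $\mu^\alpha_{\beta'}(E)$ stays bounded above as $\alpha$ runs over $K_{d-1}$; choosing $a'$ to be this upper bound plus $M/c$ then yields $\mu^\alpha_{\beta'}(F) \ge \mu^\alpha_{\beta'}(E) - a'$ for every quotient $F$, which is exactly $(a',\beta',\alpha)$-semistability, hence $(a',K')$-semistability.

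The main obstacle, and the one point that needs care, is the a priori lack of control on the numerator, since $\theta_{d-1}(F)$ is genuinely unbounded as $F$ ranges over all quotients; the crux is that the semistability inequality converts the boundedness of the denominator $\deg_\beta(F)$ (itself coming from the finiteness of the possible support cycles of $F$) into a lower bound on $\deg_\alpha(F)$. Everything else is a matter of extracting uniform positive constants from compactness, so the only genuinely geometric inputs are that the support cycles of quotients of $E$ form a finite set and that ample $d$-classes pair positively with nonzero effective $d$-cycles.
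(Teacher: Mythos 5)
Your proof is correct and takes essentially the same route as the paper's: keep the witnessing class $\alpha \in K_{d-1}$, replace $\beta$ by an arbitrary $\beta' \in K'_d$, and convert $(\beta,\alpha)$-semistability into $(\beta',\alpha)$-semistability at the cost of a uniform constant, using that degrees of pure $d$-dimensional quotients $F$ of $E$ satisfy $0 < c \le \deg_{\beta'}(F)$ and $\deg_\beta(F) \le \deg_\beta(E)$ uniformly, by compactness of $K_d, K'_d$ inside the ample cone. The only difference is organizational: the paper rewrites the slope inequality directly into the form $\mu^\alpha_{\beta'}(F) \ge \mu^\alpha_{\beta'}(E) - [\text{bracket}]$ and bounds the bracket, whereas you bound $\mu^\alpha_{\beta'}(F)$ below and $\mu^\alpha_{\beta'}(E)$ above by absolute constants and take $a'$ to be the gap; both hinge on exactly the same uniform degree estimates.
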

\begin{proof}
Let $E$ be a purely $d$-dimensional sheaf as in the statement, and consider a pure quotient $E \to F$. As $E$ is $(a,K)$-semistable, there is some $(\beta,\alpha) \in K$ such that
\[
  \mu^{\alpha}_\beta(F) \ge \mu^{\alpha}_\beta(E) - a.
\]
Now for $(\beta',\alpha) \in K'$ we have
\begin{align*}
   \mu^{\alpha}_{\beta'}(F) ={}&  \mu^\alpha_{\beta}(F) \frac{\deg_{\beta}(F)}{\deg_{\beta'}(F)} \\
     \ge{}& \mu^\alpha_{\beta}(E) \frac{\deg_{\beta}(F)}{\deg_{\beta'}(F)} - a\frac{\deg_{\beta}(F)}{\deg_{\beta'}(F)}\\
     ={}& \mu^\alpha_{\beta'}(E) \frac{\deg_{\beta'}(E)}{\deg_{\beta}(E)} \frac{\deg_{\beta}(F)}{\deg_{\beta'}(F)} - a\frac{\deg_{\beta}(F)}{\deg_{\beta'}(F)} \\
     ={}& \mu^\alpha_{\beta'}(E) - \mu^\alpha_{\beta'}(E)\left(1 - \frac{\deg_{\beta'}(E)}{\deg_{\beta}(E)} \frac{\deg_{\beta}(F)}{\deg_{\beta'}(F)} \right) - a\frac{\deg_{\beta}(F)}{\deg_{\beta'}(F)}.
\end{align*}
 Since $F$ is a quotient of $E$ we have $\deg_{\beta}(F)\le \deg_{\beta}(E)$.   So clearly the quantity
\[
\mu^\alpha_{\beta'}(E)\left(1 - \frac{\deg_{\beta'}(E)}{\deg_{\beta}(E)} \frac{\deg_{\beta}(F)}{\deg_{\beta'}(F)} \right) + a\frac{\deg_{\beta}(F)}{\deg_{\beta'}(F)}.
\]
is bounded from above by some $a' \in \R$ depending on $K, K', \theta_{d-1}, \theta_d, a$, and independent of $E \to F$.
\end{proof}

\begin{Def}[Bounded Stability Parameter]\label{def:BoundedStabilitySet}
 Let $\gamma$ be a fixed numerical class of $d$-dimensional coherent sheaves on $X$. We say that a compact set $K \subset \Amp^{d}(X) \times \Amp^{d-1}(X)$ is a {\em  bounded stability parameter set for $\gamma$} if for any $a \in \R$ the set of isomorphism classes of $d$-dimensional  $(a,K)$-semistable sheaves of class $\gamma$ on $X$ is bounded.   
\end{Def}
\begin{rem}\label{rem:compacta}
We will typically apply Lemma \ref{lem:changingDegs} to the situation where $K = K_{d} \times K_{d-1}$ is arbitrary and $K' = \{h^d\}\times K_{d-1}$. Thus in order to prove that $K$ is a bounded stability parameter set for a numerical class $\gamma$ of $d$-dimensional coherent sheaves on  $X$ it will be enough to do so for $K'$.
\end{rem}

We will use the next elementary "convexity" result when changing degree functions in our boundedness statements.

\begin{Lem}\label{lem:convexity}
 Let $V$ be a real vector space of dimension $m$, let  $e_1,\ldots,e_m$ be a basis of $V$  and let $K$ be a compact set contained in the interior of the convex cone spanned by  $e_1,\ldots,e_m$ in $V$.  Then for any choice of real constants $c_1,\ldots, c_m,c$, the subset $$L_{c_1,\ldots, c_m,c,K}:=\{ f\in V^* \ | \ f(e_j)\ge c_j \ \forall j, \ \exists v\in K \ f(v)\le c\}$$ of the dual space $V^*$ is compact. 
\end{Lem}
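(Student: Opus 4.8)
The plan is to prove compactness in the finite-dimensional space $V^*$ by showing that $L := L_{c_1,\ldots,c_m,c,K}$ is both closed and bounded. First I would introduce coordinates via the dual basis $e_1^*,\ldots,e_m^*$, writing each $f \in V^*$ as $f = \sum_j f_j e_j^*$ with $f_j = f(e_j)$, so that the defining conditions become conditions on the tuple $(f_1,\ldots,f_m)$. For closedness, the conditions $f(e_j) \ge c_j$ cut out an intersection of closed half-spaces. The existential condition ``$\exists v \in K$ with $f(v) \le c$'' describes the projection to $V^*$ of the set $\{(f,v) \in V^* \times K : f(v) \le c\}$, which is closed since the pairing is continuous; because $K$ is compact, the projection $V^* \times K \to V^*$ is a closed map, so this image is closed. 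Hence $L$ is an intersection of two closed sets.

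The crux is boundedness. Since $K$ lies in the interior of the cone spanned by $e_1,\ldots,e_m$, every $v \in K$ has a unique expansion $v = \sum_j \lambda_j(v) e_j$ with all $\lambda_j(v) > 0$, where the $\lambda_j$ are the (continuous, linear) coordinate functions. By compactness of $K$ there exist constants $0 < \delta \le M$ with $\delta \le \lambda_j(v) \le M$ for all $v \in K$ and all $j$. Now take $f \in L$ and choose $v \in K$ with $f(v) \le c$; expanding gives $\sum_j \lambda_j(v) f_j \le c$. Using $f_j \ge c_j$ and $\lambda_j(v) \in [\delta, M]$ yields $\lambda_j(v) f_j \ge \lambda_j(v) c_j \ge -M|c_j|$ for every $j$, so for a fixed index $k$,
\[
    \lambda_k(v) f_k \le c - \sum_{j \ne k} \lambda_j(v) f_j \le c + M\sum_{j \ne k}|c_j|,
\]
whence $f_k \le \delta^{-1}\bigl(c + M\sum_{j \ne k}|c_j|\bigr)$. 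This upper bound, together with the lower bound $f_k \ge c_k$, is uniform over $f \in L$, so $L$ is bounded. Combined with closedness in the finite-dimensional space $V^*$, this gives compactness.

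I expect the only genuine obstacle to be the uniform strictly positive lower bound $\delta$ on the coefficient functions $\lambda_j$ over $K$. This is exactly where the hypothesis that $K$ lies in the \emph{interior} of the cone (so that the $\lambda_j$ are strictly positive, not merely nonnegative) is essential, and compactness then upgrades pointwise positivity to a uniform bound $\delta > 0$; without strict positivity the step producing the upper bounds on the $f_k$ would break down.
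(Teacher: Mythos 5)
Your proof is correct and follows essentially the same route as the paper: both express $f$ in the dual basis and sandwich each coefficient between $c_k$ below and an upper bound extracted from $f(v)\le c$ together with $f_j\ge c_j$. The paper's (one-line) proof records exactly the bounds $c_j\le b_j\le \bigl(c-\sum_{i\ne j}a_ic_i\bigr)/a_j$ and leaves implicit both the closedness of $L$ and the uniformity of this upper bound as $v$ ranges over $K$; you make both explicit, via the projection-along-a-compact-factor argument for closedness and the uniform bounds $\delta\le\lambda_j(v)\le M$, which is precisely where the compactness of $K$ and its position in the \emph{interior} of the cone enter.

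One small slip: from $\lambda_k(v)f_k\le R:=c+M\sum_{j\ne k}|c_j|$ you may divide by $\lambda_k(v)\ge\delta$ to conclude $f_k\le R/\delta$ only when $R\ge 0$; if $R<0$ the stated inequality can actually fail. For instance, take $m=2$, $K=\{2e_1+e_2\}$, $c_1=-100$, $c_2=0$, $c=-2$, so $\delta=1$, $M=2$, $R=-2$; then $f=-e_1^*$ lies in $L$ but $f_1=-1>R/\delta=-2$. The repair is immediate and does not affect boundedness: in the case $R<0$ divide instead by $\lambda_k(v)\le M$ to get $f_k\le R/M$, or simply replace $R$ by $|c|+M\sum_{j\ne k}|c_j|\ge 0$ throughout, yielding the uniform bound $f_k\le \delta^{-1}\bigl(|c|+M\sum_{j\ne k}|c_j|\bigr)$.
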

\begin{proof}
If we write elements $v\in K$ as $v=\sum_{j=1}^m a_je_j\in V$ with $a_j>0$ for $j=1,\ldots,m$ and we express an element $f$ of $L_{c_1,\ldots, c_m,c,K}$ in terms of the dual basis as $f=\sum_{j=1}^m b_je_j^*$, we immediately obtain bounds
$$c_j\le b_j\le \frac{c-\sum_{i\ne j}^m a_ic_i}{a_j}$$
for each coefficient $b_j$ of $f$.
\end{proof}

\section{The case of two-dimensional support}\label{section:dimension2}

As before $X$ will denote a smooth projective variety over an algebraically closed field $k$ of arbitrary characteristic and $h = [\cO_{X}(1)] \in \Amp^1(X)$  the class of a very ample divisor. For a two-dimensional coherent sheaf $E$ on $X$, its Hilbert polynomial with respect to any ample class $\alpha \in \Amp^{1}(X)$ is defined as
$$P^{\alpha}_{E}(m) = \int_{X} \ch(E)\Todd(X) e^{m\alpha},$$ which may be rewritten as

\begin{align*}
P^{\alpha}_{E}(m) ={}& \frac{m^{2}}{2}\alpha^{2}\theta_2(E) + m\alpha \theta_1(E) + \theta_0(E),
\end{align*}
where
\begin{align*}
    \theta_2(E) &= \ch_{n-2}(E)\Todd_{0}(X),\\
    \theta_1(E) &= \ch_{n-2}(E)\Todd_{1}(X) + \ch_{n-1}(E)\Todd_{0}(X),\\
    \theta_0(E) &= \ch_{n-2}(E)\Todd_{2}(X) + \ch_{n-1}(E)\Todd_{1}(X) + \ch_{n}(E)\Todd_{0}(X)
\end{align*}
are classes in $N^*(X)$. In particular $\theta_1$ is the dual class of the homology Todd class $\tau_{1}(E) \in N_1(X)$, see \cite[Section 18.2]{FultonIT}.

We will call $\mult(E):=h^{2}\theta_2(E)$ and $\deg_\alpha(E):=\alpha\theta_1(E)$ the {\em multiplicity of $E$}, respectively the {\em degree of $E$ with respect to $\alpha$} and will define the {\em slope of $E$ with respect to $\alpha$} by 
$$\mu^\alpha(E):=\frac{\deg_\alpha(E)}{\mult(E)}.$$ (We remark that this definition slightly differs from other conventions.)

If the scheme-theoretic support of $E$ is an integral surface $Y$ we will denote by $r(E)$ the rank of $E$ seen as an $\cO_Y$-module. In this case we have $\mult(E)=r(E)\mult(\cO_Y)$. (See \cite[Definition 05JV]{stacks-project} for the definition of the scheme theoretic support.)

\begin{Lem}\label{lem:rank1inequality}Let $L$ be a pure $2$-dimensional coherent sheaf whose scheme-theoretic support is an integral surface $Y$. Suppose moreover that $L$ has rank one on its support. Then $\chi(L)$ is upper bounded in terms of $\mult(L)=h^{2}\theta_2(L)=\deg_{h^{2}}(Y)$ and $\deg_h(L)=h\theta_1(L)$. 
\end{Lem}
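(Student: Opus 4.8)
The plan is to bound $h^0$ and $h^2$ of $L$ separately from above, since $\chi(L) = h^0(L) - h^1(L) + h^2(L) \le h^0(L) + h^2(L)$. The enabling observation is that $\mult(L) = \deg_{h^2}(Y)$ controls the $h$-degree of the integral surface $Y$, so as $L$ varies $Y$ ranges over a \emph{bounded} family of surfaces; consequently every discrete invariant of $Y$ (its arithmetic genus, the degree and arithmetic genus of a general hyperplane-section curve, the $h$-degree of its dualizing sheaf, etc.) is bounded in terms of $\mult(L)$ alone. I will use this repeatedly to absorb $Y$-dependent quantities into the final bound.

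To bound $h^0(Y,L)$ I would use d\'evissage by general hyperplane sections. For a general $H \in |\mathcal O_X(1)|$ the curve $C = Y \cap H$ is integral (Bertini), and since $L$ is pure two-dimensional, multiplication by the equation of $H$ is injective, giving
\[
0 \to L(-1) \to L \to L|_C \to 0, \qquad L(-j) := L\otimes \mathcal O_X(-j),
\]
whence $h^0(L) \le h^0(L(-1)) + h^0(L|_C)$. Iterating with a fresh general section at each stage yields $h^0(L) \le \sum_{j\ge 0} h^0(C_j, L(-j)|_{C_j})$. This terminates uniformly: a nonzero section of $L(-N)$ embeds $\mathcal O_Y$ into $L(-N)$ with torsion cokernel, so additivity of $\deg_h$ in exact sequences and effectivity of the degree of the (at most one-dimensional) cokernel force $\deg_h(L(-N)) \ge \deg_h(\mathcal O_Y)$; since $\deg_h(L(-N)) = \deg_h(L) - N\,\mult(L)$ and $\deg_h(\mathcal O_Y)$ is bounded, the summands vanish once $N$ exceeds $(\deg_h(L) - \deg_h(\mathcal O_Y))/\mult(L)$. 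Each surviving term is controlled by the curve estimate $h^0(C, G) \le \deg_C(G) + 1$ for a rank-one torsion-free sheaf $G$ on an integral curve $C$ (and $h^0 = 0$ when $\deg_C(G) < 0$), together with $\deg_{C_j}(L(-j)|_{C_j}) = \deg_h(L) - j\,\mult(L) + O(1)$, the $O(1)$ depending only on $Y$. Summing a bounded number of bounded terms then bounds $h^0(L)$.

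For $h^2(Y,L) = h^2(X,L)$ I would pass to the dual. Serre--Grothendieck duality on the smooth ambient $X$ gives $H^2(X,L)^* \cong \Ext^{n-2}_X(L,\omega_X)$; because $L$ is pure of dimension $2$ one has $\mathcal{E}xt^q_X(L,\omega_X) = 0$ for $q < n-2$ and the higher $\mathcal{E}xt$'s are supported in dimension $<2$, so the local-to-global spectral sequence collapses in the relevant corner to $\Ext^{n-2}_X(L,\omega_X) = H^0(Y, L^D)$, where $L^D := \mathcal{E}xt^{n-2}_X(L,\omega_X)$ is again pure, i.e.\ rank-one torsion-free on $Y$, with $L^D|_{Y^{\mathrm{sm}}} \cong \omega_Y \otimes L^{-1}$. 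Thus $\mult(L^D) = \mult(L)$ and $\deg_h(L^D) = -\deg_h(L) + \deg_h(\omega_Y) + O(1)$ is controlled by $\deg_h(L)$, $\mult(L)$ and the bounded invariants of $Y$. Applying the $h^0$-bound of the previous paragraph to $L^D$ bounds $h^2(L) = h^0(L^D)$, and combining the two estimates bounds $\chi(L)$ from above.

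The main obstacles are technical rather than conceptual. First, one must justify integrality (or at least reducedness) of the general hyperplane-section curves $C_j$ in arbitrary characteristic, where Bertini-type irreducibility is delicate; this is the place where the characteristic-free nature of the argument is genuinely tested. Second, one must verify that the duality identification $h^2(L) = h^0(L^D)$ and the claim that $L^D$ is rank-one torsion-free hold over the possibly badly singular $Y$, which requires the purity formalism for $\mathcal{E}xt$-duals rather than na\"ive Serre duality on $Y$. Finally, all the $O(1)$ corrections and the curve-level estimates must be made uniform across the bounded family of surfaces of degree $\mult(L)$, which is exactly what boundedness of that family supplies.
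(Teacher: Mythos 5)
Your proof is correct, but it takes a genuinely different route from the paper's. The paper argues by pushing forward: it chooses a finite linear projection $\pi : Y \to \P^2$, notes that a rank-one sheaf on an integral surface is automatically slope-semistable, invokes the Bogomolov-type discriminant bound $\Delta(\pi_*L) \ge -d^6 - \beta_d$ of \cite[Lemma 2.12]{Pavel2022Thesis}, and then extracts the upper bound on $\chi(L)$ (the constant Hilbert coefficient) from $\ch(\pi_*L)$ on $\P^2$, since the linear Hilbert coefficient pins down $\ch_1(\pi_*L)\xi$ and hence $\ch_1(\pi_*L)^2$. You instead split $\chi(L)\le h^0(L)+h^2(L)$, bound $h^0$ by hyperplane d\'evissage down to integral curves (with termination forced by $\deg_h(L(-N))\ge \deg_h(\cO_Y)$ and Grothendieck's boundedness of the Chow family of supports), and bound $h^2$ through $h^2(L)=h^0(X,L^D)$, feeding $L^D$ back into the $h^0$ estimate. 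The paper's route is short but rests on a nontrivial external input whose characteristic-$p$ sensitivity shows in the correction term $\beta_d$; yours is elementary and self-contained, and it dovetails with the duality formalism the paper develops anyway (Lemma \ref{lem:duality}, Corollary \ref{cor:E^D}) --- indeed it is close in spirit to Claim 1 in the proof of Theorem \ref{thm:boundedness2Dim}. There is no circularity in borrowing that formalism here, since it is proved independently of the present lemma.

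Of the difficulties you flag, two deserve an explicit answer. First, the Bertini issue is not actually characteristic-sensitive in your setting: for hyperplane sections of an embedded integral surface, irreducibility of the general section holds in all characteristics (Bertini--Jouanolou, as $\dim Y\ge 2$); generic reducedness follows because $Y$ is generically smooth over a perfect field and the incidence-variety dimension count for hyperplane sections of the smooth locus needs no characteristic-zero hypothesis; and $S_1$ holds once the general $H$ avoids the finitely many non-$S_2$ points of $Y$. So the general $C_j$ is integral in any characteristic, and your argument is genuinely characteristic-free. Second, your formula $\deg_h(L^D)=-\deg_h(L)+\deg_h(\omega_Y)+O(1)$ mixes degree conventions; with the paper's convention $\deg_h=h\theta_1$ the relation is exact and no $\omega_Y$-term appears: purity of $L$ gives $L^D\cong (L^{DD})^D$ with $L^{DD}/L$ zero-dimensional, so $\theta_1(L^{DD})=\theta_1(L)$, and Corollary \ref{cor:E^D} applied to the reflexive sheaf $L^{DD}$ yields $\theta_1(L^D)=-\theta_1(L)$, hence $\deg_h(L^D)=-\deg_h(L)$ and $\mult(L^D)=\mult(L)$. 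With that bookkeeping fixed, your two estimates combine as claimed to bound $\chi(L)$ from above in terms of $\mult(L)$, $\deg_h(L)$ and the bounded family of supports.
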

\begin{proof}
Consider a linear projection $\pi : Y \to \P^{2}$ with respect to $\cO_{X}(1)$, which is finite and surjective. As $L$ is $\mu^{h}$-semistable, by \cite[Lemma 2.12]{Pavel2022Thesis} we have 
\begin{equation}\label{eq:bogomolov}
\Delta(\pi_{*}L_{E}) \ge -d^{6} - \beta_d,
\end{equation} 
where $d = \deg_{h^{2}}(Y)$,
\[
    \beta_d = \begin{cases} 
          0 & \text{if }\car(k) = 0, \\
          \frac{d^2(d-1)^2}{(p-1)^2} & \text{if }\car(k) = p > 0 
       \end{cases}
\]
and
\begin{equation}\label{eq:delta}
\Delta := \ch_{1}^{2} - 2d\ch_{2}.
\end{equation} 

The Hilbert polynomial of $L$ with respect to $\cO_{X}(1)$ coincides with that of $\pi_{*}L$ with respect to $\cO_{\P^{2}}(1)$. 
Hence the last coefficient $a_{0}$ of $P_{L}$ equals 
\begin{align*}\label{eq:Hilbcoeff}
a_{0} ={}& \ch_{0}(\pi_{*}L)\Todd_{2}(\P^{2}) + \ch_{1}(\pi_{*}L)\Todd_{1}(\P^{2}) + \ch_{2}(\pi_{*}L)\Todd_{0}(\P^{2}) \\ ={}& d\xi^{2} + \frac{3}{2}\ch_{1}(\pi_{*}L)\xi + \ch_{2}(\pi_{*}L),
\end{align*}
where $\xi = [\cO_{\P^{2}}(1)]$. Replacing \eqref{eq:bogomolov} and \eqref{eq:delta} in the above expression we get $$a_{0} \leq d\xi^{2} + \frac{3}{2}\ch_{1}(\pi_{*}L)\xi + \frac{1}{2d}(\ch_{1}(\pi_{*}L)^{2}+d^{6} + \beta_d).$$
Note that the second coefficient of $P_{L}$ equals $$\ch_{1}(\pi_{*}L)\xi + \frac{3}{2}d \xi^{2},$$ and therefore $\ch_{1}(\pi_{*}L)\xi$ and $\ch_{1}(\pi_{*}L)^{2}$ are bounded. 
This shows that $a_{0}$ is upper bounded as claimed.
\end{proof}

\begin{notation}
Fix  $a\in\R$, 
$K\subset \Amp^2(X)\times\Amp^1(X)$
a compact subset, $r$ and $b$ rational numbers and a class $\beta \in N^{2n-2}(X)$. 
Let $$\Sigma = \Sigma(r,\beta,b,a,K)$$ denote the set of isomorphism classes of $(a,K)$-semistable two-dimensional coherent sheaves $E$ on $X$ of multiplicity $r$, with $\theta_{1}(E)=\beta$ and $\chi(E) \geq b$. 
\end{notation}

The rest of this Section is devoted to the proof of the following.

\begin{thm}\label{thm:boundedness2Dim}
For $r,\beta,b,a,K$ fixed as above, the set $\Sigma(r,\beta,b,a,K)$ is bounded.
\end{thm}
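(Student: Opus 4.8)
## Proof Proposal

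The plan is to establish boundedness of $\Sigma(r,\beta,b,a,K)$ by reducing to the rank-one case handled in Lemma \ref{lem:rank1inequality} via a devissage through the Jordan--Hölder-type filtration adapted to $(a,K)$-semistability. The overall strategy follows Maruyama's inductive scheme: one controls boundedness by bounding the Hilbert polynomial, and since $r = \mult(E)$ and $\theta_1(E) = \beta$ are already fixed (hence the two leading coefficients of $P_E^h$ are fixed), it remains only to bound the constant term $\theta_0(E)$, equivalently $\chi(E)$, \emph{from above} (the lower bound $\chi(E) \ge b$ is built into the definition of $\Sigma$). By the standard criterion (see \cite[Theorem 1.7.8, Lemma 1.7.6]{HL}), a set of sheaves with fixed multiplicity and a uniform bound on $\chi$ together with a uniform bound on the dimensions of cohomology of twists — or equivalently a uniform Castelnuovo--Mumford-type regularity bound — is bounded. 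So the heart of the matter is a two-sided estimate on $\chi(E)$.

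First I would use Remark \ref{rem:compacta} together with Lemma \ref{lem:changingDegs} to replace the arbitrary compact $K = K_2 \times K_1$ by $K' = \{h^2\} \times K_1$ at the cost of enlarging the slack parameter $a$ to some $a'$; this normalizes the denominator of the slope to $\mult(E)$ and lets me work with the degree functions $\deg_\alpha(E) = \alpha\theta_1(E)$ for $\alpha \in K_1$. Since $\theta_1(E) = \beta$ is fixed and $K_1$ is compact, the numerator $\deg_\alpha(E)$ takes values in a bounded range uniformly over $E \in \Sigma$ and $\alpha \in K_1$, so all the slopes $\mu^\alpha(E)$ are uniformly bounded. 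The next step is to produce, for each $E$, a filtration $0 = E_0 \subset E_1 \subset \cdots \subset E_\ell = E$ by saturated subsheaves whose successive quotients $E_i/E_{i-1}$ are pure two-dimensional sheaves of rank one on integral surface supports. Such a filtration exists because any pure two-dimensional sheaf can be filtered so that the graded pieces are torsion-free rank-one sheaves over integral components of the support (passing to the reduced irreducible components of $\Supp(E)$ and using that rank drops strictly). The multiplicities of the pieces sum to $r$, so their number $\ell$ and the individual multiplicities are bounded in terms of $r$.

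The key estimate is then to bound $\chi(E) = \sum_i \chi(E_i/E_{i-1})$ from above. Each graded piece $G_i := E_i/E_{i-1}$ is rank one on an integral surface, so Lemma \ref{lem:rank1inequality} gives an upper bound on $\chi(G_i)$ \emph{provided} I can bound $\mult(G_i)$ and $\deg_h(G_i) = h\theta_1(G_i)$. The multiplicities are already controlled; the degrees $\deg_h(G_i)$ must be shown to lie in a bounded range. Here the $(a',h^2,\alpha)$-semistability of $E$ enters: for each quotient $E \twoheadrightarrow E/E_{i-1}$ the slope inequality bounds $\mu^\alpha(E/E_{i-1})$ from below by $\mu^\alpha(E) - a'$, and dually, applying semistability to the subsheaves, one bounds the slopes $\mu^\alpha(E_i)$ from above. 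Combining these telescoping inequalities across the filtration, together with the fixed total $\theta_1(E) = \beta$ and fixed total multiplicity $r$, confines each partial degree $\deg_\alpha(E_i)$ — and hence each $\deg_h(G_i)$ — to a bounded interval. The main obstacle I anticipate is precisely this propagation of bounds through the filtration: semistability only directly controls sub/quotient slopes relative to $E$, so one must argue carefully (likely by an extremal/convexity argument in the spirit of Lemma \ref{lem:convexity}, choosing $\alpha$ adversarially within $K_1$) that no individual graded piece can have arbitrarily large or small degree without violating the uniform slope bound on some intermediate quotient. Once each $\deg_h(G_i)$ is boxed in, Lemma \ref{lem:rank1inequality} bounds each $\chi(G_i)$ above, summing to an upper bound on $\chi(E)$; combined with $\chi(E) \ge b$ this yields a bounded Hilbert polynomial, and the boundedness of $\Sigma$ follows from the Grothendieck-type boundedness criterion.
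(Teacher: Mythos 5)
There is a genuine gap, and it sits exactly where you flag ``the main obstacle I anticipate'': the degrees of the graded pieces of your filtration cannot be controlled by the semistability of $E$ alone, and the convexity/adversarial-$\alpha$ argument you hope for does not exist. Semistability gives only \emph{one-sided} information: for a saturated subsheaf $E_i\subset E$ one gets $\mu^{\alpha}(E_i)\le \mu^{\alpha}(E)+\frac{\mult(E)-\mult(E_i)}{\mult(E_i)}a$, i.e.\ an \emph{upper} bound on $\deg_\alpha(E_i)$, and the quotient inequality $\mu^\alpha(E/E_i)\ge\mu^\alpha(E)-a$ is equivalent to the same upper bound. No lower bound on $\deg_\alpha(E_i)$ follows (indeed every pure sheaf has rank-one subsheaves of arbitrarily negative degree: twist any rank-one subsheaf down by a large divisor). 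Consequently $\deg_\alpha(G_i)=\deg_\alpha(E_i)-\deg_\alpha(E_{i-1})$ is a difference of two quantities each bounded only from above, so it is not confined in either direction, and Lemma \ref{lem:rank1inequality} cannot be applied to the pieces. The standard tool for lower bounds on subsheaf degrees (Grothendieck's lemma) requires the ambient family to be bounded already, which is what you are trying to prove --- so that route is circular. A secondary but real problem is your opening reduction: two-sided control of $\chi(E)$ together with the fixed classes $\theta_2,\theta_1$ fixes the Hilbert polynomial, but a fixed Hilbert polynomial does not imply boundedness (cf.\ $\cO(n)\oplus\cO(-n)$ on a curve); one needs actual boundedness of the constituent sheaves, not just of their numerical invariants.

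The paper's proof is built precisely to supply the missing lower bound, and it does so by \emph{constructing} the rank-one subsheaf rather than taking an arbitrary filtration. After reducing to $(S_2)$-sheaves, it proves (using the duality $H^2(X,E(m))\cong H^0(X,E^D(-m))^\vee$ of Lemma \ref{lem:duality} plus semistability to kill $H^2$, then Riemann--Roch) that $E(m)$ has a section for a uniform $m$; this yields an injection $\cO_{Y_1}\hookrightarrow E(m)$ from the structure sheaf of an integral component of the support, and $L_E$ is taken to be the saturation of $\cO_{Y_1}$. The crucial lower bound $\deg_{\tilde h}(L_E)\ge \deg_{\tilde h}(\theta_1(\cO_{Y_1}))$ is then automatic because $L_E$ \emph{contains} $\cO_{Y_1}$ with lower-dimensional quotient, and $\deg(\cO_{Y_1})$ is controlled because the support cycles form a bounded family (Lemma \ref{lem:boundedSupp}, via Chow's lemma); the upper bound comes from semistability as you describe, and Lemma \ref{lem:convexity} then bounds $\theta_1(L_E)$. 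Finally, instead of handling all pieces of a filtration simultaneously, the paper proceeds one step at a time by induction on the multiplicity $r$: it verifies by direct computation that the quotient $E_1=E(m)/L_E$ is again $(a_1,K)$-semistable for a uniform $a_1$, so the induction hypothesis (not a numerical criterion) bounds $E_1$, and Maruyama's theorem bounds the rank-one piece $L_E$ once its $\chi$ is pinned on both sides. This re-establishment of semistability for the quotient at each stage is what replaces the degree control you would need for the intermediate terms of your filtration, and it is the step your proposal has no substitute for.
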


As mentioned in Remark \ref{rem:compacta} it is enough to prove the above boundedness statement for compact sets of the form  $K:=\{h^2\}\times K_1$. We will therefore work with  compacta of this form for the rest of this Section.

We start with some preparations.   Note first that the condition for a pure sheaf $E$ to be $(a,K)$-semistable is equivalent to asking for the existence of some $\alpha \in K_1$ such that for any coherent subsheaf $F\subset E$ with $0 < \mult(F) < \mult(E)$ we have 
\begin{equation}\label{eq:subsheaf}
\mu^{\alpha}(F) \le \mu^{\alpha}(E) + \frac{\mult(E)-\mult(F)}{\mult(F)}a.
\end{equation}

{For any two-dimensional pure sheaf $E$ on $X$, we denote by
\[
    Y(E) = \sum \nu_{Y,E} [Y]
\]
the two-dimensional support cycle of $E$, where the sum is over the irreducible components of $\Supp(E)$. Here $\Supp(E)$ denotes the support of $E$, endowed with the reduced scheme structure. If $Z \subset X$ is a pure two-dimensional subscheme, we simply write $[Z] := [\cO_Z]$.

We will use the definition of the {\em dual sheaf } $E^D$ of a coherent sheaf of dimension $d$ on $X$ following \cite[Definition 1.1.7]{HL}, which is 
$E^D:=\cE xt_X^{n-d}(E,\omega_X).$ The sheaf $E$ will be called reflexive if the natural morphism $E\to E^{DD}$
is an isomorphism. We will also say that a sheaf $E$ has property $(S_2)$ if
\[
  (S_2): \  \depth(E_x) \ge \min\{2,\codim(x,\Supp(E)\} \text{ for all }x \in \Supp(E).
\]
We recall also the $(S_{2,c})$ property (discussed in \cite[Section 1]{HL}) for coherent sheaves $E$ of codimension $c:=n-\dim(E)$ on $X$, which is 
\[
 (S_{2,c}): \   \depth(E_x) \ge \min\{2,\dim(\cO_{X,x})-c\} \text{ for all }x \in \Supp(E).
\]
Note that the $(S_2)$ property is equivalent to the $(S_{2,c})$ property  for pure sheaves of codimension $c$ in $X$. Thus by \cite[Proposition 1.1.10]{HL} a sheaf $E$ of codimension $c$ on $X$ is reflexive if and only if $E$ has $(S_{2,c})$ if and only if $E$ is pure and has $(S_2)$.

\begin{Lem}\label{lem:duality}
Let $E$ be a 2-dimensional pure sheaf on $X$ satisfying $(S_{2})$. Then $$H^{i}(X,E^{D}) \cong H^{2-i}(X,E)^{\vee}$$ for $0 \leq i \leq 2$.
\end{Lem}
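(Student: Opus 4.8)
The plan is to compute both sides via Serre duality on the smooth projective variety $X$ together with the local-to-global spectral sequence for $\Ext$, the point being that the hypotheses force all but one row of that spectral sequence to vanish. Writing $c = n-2$ for the codimension of $E$, recall that by definition $E^D = \cE xt_X^{c}(E,\omega_X)$. I would begin from the local-to-global spectral sequence
$$H^p(X, \cE xt^q_X(E,\omega_X)) \Rightarrow \Ext^{p+q}_X(E,\omega_X)$$
and from Serre duality in the form $\Ext^{j}_X(E,\omega_X) \cong H^{n-j}(X,E)^\vee$, which is available since $X$ is smooth projective.

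The heart of the argument is to show that $\cE xt^q_X(E,\omega_X) = 0$ for every $q \neq c$. The vanishing for $q < c$ is the standard codimension estimate for the dual sheaves of a sheaf of codimension $c$, which I would cite from \cite[Proposition 1.1.6]{HL}. For the vanishing in the range $q > c$ I would argue pointwise. Since $E$ is pure of codimension $c$ and satisfies $(S_2)$, it has property $(S_{2,c})$, so $\depth(E_x) \geq \min\{2, \dim\mathcal{O}_{X,x} - c\}$ for all $x \in \Supp(E)$. Because $\Supp(E)$ has dimension $2$, every $x \in \Supp(E)$ satisfies $\dim\mathcal{O}_{X,x} - c = 2 - \dim\overline{\{x\}} \leq 2$, so the minimum is attained by its second entry and $\depth(E_x) \geq \dim\mathcal{O}_{X,x} - c$. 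Over the regular local ring $\mathcal{O}_{X,x}$ the Auslander--Buchsbaum formula then gives $\mathrm{pd}(E_x) = \dim\mathcal{O}_{X,x} - \depth(E_x) \leq c = n-2$, whence $\cE xt^q_X(E,\omega_X)_x \cong \Ext^q_{\mathcal{O}_{X,x}}(E_x, \mathcal{O}_{X,x})$ vanishes for $q > n-2$ (using that $\omega_X$ is locally free of rank one); for $x \notin \Supp(E)$ the stalk is zero anyway.

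With only the row $q = c$ surviving, the spectral sequence degenerates and yields $H^p(X, E^D) \cong \Ext^{p+c}_X(E,\omega_X)$. Combining this with Serre duality gives $\Ext^{p+c}_X(E,\omega_X) \cong H^{n-(p+c)}(X,E)^\vee = H^{2-p}(X,E)^\vee$, which is exactly the asserted isomorphism after renaming $p = i$ (both sides vanishing outside the range $0 \le i \le 2$).

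I expect the depth computation to be the only genuine obstacle, and it is precisely where the hypothesis $\dim E = 2$ is essential: the $(S_2)$ condition only bounds depth by $2$, and it is exactly when $d = 2$, so that $c = n-2$, that this bound is strong enough to kill every higher $\cE xt$ sheaf and make the spectral sequence collapse. For $d > 2$ the same estimate would only give vanishing for $q > n-2$, leaving additional nonzero rows and destroying the degeneration.
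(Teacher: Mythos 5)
Your proposal is correct and takes essentially the same route as the paper: the local-to-global Ext spectral sequence collapses to the single row $q = c$ because $\cE xt^q_X(E,\omega_X)=0$ for $q\neq c$, and then Serre duality on $X$ gives the claim. The only difference is that for $q>c$ you re-derive the vanishing by hand from $(S_{2,c})$ via depth and Auslander--Buchsbaum over the regular local rings $\cO_{X,x}$, whereas the paper obtains the whole vanishing statement directly from \cite[Proposition 1.1.6]{HL}; your local computation is a correct unwinding of that citation, including the observation that $d=2$ is exactly what makes the depth bound strong enough at every point of the support.
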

\begin{proof}
We have seen  that $E$ is pure and has $(S_{2})$ if and only if $E$ has $(S_{2,c})$ where $c$ is the codimension of the support of $E$ in $X$. By \cite[Proposition 1.1.6]{HL} we see that under the $(S_{2,c})$ assumption on $E$ $$\cE xt^{q}_{X}(E,\omega_{X}) = 0, \quad \text{for }q \neq c.$$

By the local-to-global Ext spectral sequence,
$$H^{p}(X,\cE xt^{q}_{X}(E,\omega_{X})) \implies \Ext^{p+q}_{X}(E,\omega_{X}),$$ we deduce that $$H^{i}(X,E^{D}) = H^{i}(X,\cE xt^{c}_{X}(E,\omega_{X})) \cong \Ext^{i+c}_{X}(E,\omega_{X})$$ for any $i$. By Serre Duality on $X$, the latter space is isomorphic to $H^{2-i}(X,E)^{\vee}$, which proves the claim.
\end{proof}

\begin{cor}\label{cor:E^D}
If $E$ is a two dimensional reflexive sheaf on $X$, one has $\theta_i(E^D)=(-1)^i\theta_i(E)$
for all $i\in\{0,1,2\}$. 
\end{cor}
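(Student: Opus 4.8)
The plan is to reduce the whole statement to a single identity in the rational Chow ring $A^*(X)_\Q \cong K(X)_\Q$ and then read off graded pieces. Writing $\psi(E):=\ch(E)\Todd(X)\in A^*(X)_\Q$, a direct reindexing of the defining formula $\theta_j(E)=\sum_{i\ge j}\ch_{n-i}(E)\Todd_{i-j}(X)$ shows that $\theta_i(E)$ is precisely the codimension-$(n-i)$ component $\psi(E)_{n-i}$. Let $(-)^\vee$ denote the involution of $A^*(X)_\Q$ that multiplies a codimension-$p$ class by $(-1)^p$; since it merely rescales graded pieces by signs it is a ring automorphism, so $(\xi\eta)^\vee=\xi^\vee\eta^\vee$. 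The Corollary will follow once I establish the global identity $\psi(E^D)=(-1)^{n-2}\psi(E)^\vee$, since taking codimension-$(n-i)$ parts then yields $\theta_i(E^D)=(-1)^{n-2}(-1)^{n-i}\theta_i(E)=(-1)^{2n-2-i}\theta_i(E)=(-1)^i\theta_i(E)$. I emphasise that one cannot simply extract this from Lemma \ref{lem:duality}: that lemma gives $\chi(E^D)=\chi(E)$, hence the case $i=0$, and twisting by line bundles would control the pairing of $\theta_2(E^D)$ against squares $c_1(M)^2$, which need not span $N^2(X)_\Q$; the clean route is therefore to work at the level of Chern characters.

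Second, I would compute $\ch(E^D)$ using reflexivity. As $E$ is reflexive it is pure of codimension $c=n-2$ with the $(S_{2,c})$ property, so — exactly as in the proof of Lemma \ref{lem:duality}, via \cite[Proposition 1.1.6]{HL} — one has $\cE xt^q_X(E,\omega_X)=0$ for $q\ne c$. Hence $R\cH om_X(E,\omega_X)$ has a single nonzero cohomology sheaf, in degree $c$, equal to $E^D$, giving a quasi-isomorphism $R\cH om_X(E,\omega_X)\cong E^D[-c]$ in $\Db(X)$ and therefore $\ch\!\big(R\cH om_X(E,\omega_X)\big)=(-1)^c\ch(E^D)$. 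On the other hand, choosing a finite locally free resolution of $E$ (available since $X$ is smooth) and dualizing it, additivity of $\ch$ together with the locally free case $\ch(P^\vee)=\ch(P)^\vee$ gives $\ch\!\big(R\cH om_X(E,\cO_X)\big)=\ch(E)^\vee$; twisting by the line bundle $\omega_X$ then yields $\ch\!\big(R\cH om_X(E,\omega_X)\big)=\ch(E)^\vee\ch(\omega_X)$. Combining the two computations gives $(-1)^c\ch(E^D)=\ch(E)^\vee\ch(\omega_X)$.

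Third, I would multiply through by $\Todd(X)$ and invoke the Todd duality identity $\ch(\omega_X)\Todd(X)=\Todd(X)^\vee$. This is the formal Chern-root computation: if $x_j$ are the Chern roots of $T_X$, then $\Todd(X)=\prod_j \tfrac{x_j}{1-e^{-x_j}}$ transforms under $x_j\mapsto -x_j$ into $e^{-\sum_j x_j}\Todd(X)$, and $\sum_j x_j=c_1(T_X)=-c_1(\omega_X)$, so $\Todd(X)^\vee=e^{c_1(\omega_X)}\Todd(X)=\ch(\omega_X)\Todd(X)$. Using multiplicativity of $(-)^\vee$ I then get $\psi(E^D)=\ch(E^D)\Todd(X)=(-1)^c\,\ch(E)^\vee\,\ch(\omega_X)\Todd(X)=(-1)^c\,\ch(E)^\vee\,\Todd(X)^\vee=(-1)^c\big(\ch(E)\Todd(X)\big)^\vee=(-1)^{n-2}\psi(E)^\vee$, which is exactly the identity needed in the first step. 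The argument is characteristic-free, since both the Chern-character isomorphism to the Chow ring and the local-to-global $\cE xt$-vanishing hold in any characteristic, matching the unrestricted hypotheses of the Corollary. The main thing to get right is the bookkeeping of signs across the three operations involved — the shift $[-c]$, the duality involution, and the twist by $\omega_X$ — together with a careful check of the Todd duality identity; everything else is routine.
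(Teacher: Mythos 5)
Your proof is correct, but it takes a genuinely different route from the paper's. The paper proves the Corollary in one line from Lemma \ref{lem:duality}: since twisting by a line bundle preserves reflexivity and $(E(m))^D=E^D(-m)$, one has $P^h_E(m)=\chi(E(m))=\chi(E^D(-m))=P^h_{E^D}(-m)$ for every integral ample class $h$ and every $m\in\Z$, and the signs $\theta_i(E^D)=(-1)^i\theta_i(E)$ are then read off from the coefficients of these polynomials. Your argument instead establishes the sheaf-level identity $\ch(E^D)=(-1)^{n-2}\ch(E)^\vee\ch(\omega_X)$, using the vanishing $\cE xt^q_X(E,\omega_X)=0$ for $q\ne c$ (the same input as Lemma \ref{lem:duality}) together with a finite locally free resolution, and then converts it into the statement about the $\theta_i$ via the Todd identity $\ch(\omega_X)\Todd(X)=\Todd(X)^\vee$; your sign bookkeeping for the shift $[-c]$, the involution $(-)^\vee$ and the twist by $\omega_X$ all checks out. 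What your route buys is exactly the point you flag at the outset, which is in fact a critique of the paper's own proof: the Hilbert-polynomial/twisting argument controls $\theta_2(E^D)-\theta_2(E)$ only through its pairings with products of two divisor classes (squares $h^2$, hence by polarization the image of $\mathrm{Sym}^2N^1(X)_\Q$ in $N^2(X)_\Q$), and this image need not be all of $N^2(X)_\Q$ — on a smooth quadric fourfold, for instance, $N^2$ has rank $2$ while $N^1$ has rank $1$. So for $i=2$ the paper's ``follows'' conceals a step, whereas your Chow-level identity gives equality of pairings against all of $N^2(X)_\Q$ at once, and indeed the stronger equality in $A^*(X)_\Q$. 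Two remarks for perspective: the weaker conclusion is all the paper ever uses (in the proof of Theorem \ref{thm:boundedness2Dim} only the pairings $\alpha\,\theta_1$, $\alpha h\,\theta_2$ and $h^2\theta_2$ occur), and the $i=2$ case can also be repaired more cheaply by observing that $\theta_2(E)=\ch_{n-2}(E)$ is the support cycle class of $E$, which is unchanged under $E\mapsto E^D$ since local duality preserves lengths at the generic points of the support.
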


\begin{proof}
By Lemma \ref{lem:duality} we have for any integer ample class $h\in\Amp^1(X)$ and any integer $m$:
$$P_E^{ h}(m)=\chi(E(m))=\chi(E^D(-m))=P_{E^D}^h(-m)$$
and the equality $\theta_i(E^D)=(-1)^i\theta_i(E)$ follows.
\end{proof}

\begin{Lem}\label{lem:boundedSupp}
    For $r,\beta,b,a,K$ fixed as above, the two-dimensional support cycles $Y(E)$ with $[E] \in \Sigma(r,\beta,b,a,K)$ are bounded.
\end{Lem}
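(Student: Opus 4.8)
The plan is to recognise that the support cycle $Y(E)$ is nothing but the leading (top-dimensional) part of the Chern character of $E$, so that fixing the multiplicity $r$ fixes the degree of $Y(E)$ with respect to the very ample class $h$; boundedness then follows from the finiteness of the space of effective cycles of bounded degree.

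First I would recall the identification of $\theta_2(E)$ with the support cycle. For a $2$-dimensional coherent sheaf $E$ on the $n$-dimensional $X$, the component $\ch_{n-2}(E)$ in $N^{n-2}(X)=N_2(X)$ is the leading term of the Chern character, and by Grothendieck--Riemann--Roch it coincides with the fundamental cycle class $\sum_Y \nu_{Y,E}[Y]$, where $\nu_{Y,E}=\mathrm{length}_{\cO_{X,\eta_Y}}(E_{\eta_Y})$; equivalently, it is the top-dimensional part of the homology Todd class $\tau(E)$ (cf. \cite[Section 18.2]{FultonIT}). Since $\Todd_0(X)=1$, this gives $\theta_2(E)=[Y(E)]$ as numerical classes --- precisely the identification already used implicitly in Lemma \ref{lem:rank1inequality}, where $\mult(L)=h^2\theta_2(L)=\deg_{h^2}(Y)$.

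Next I would read off the degree of the support cycle. Intersecting with $h^2$ and using the previous step, for every $[E]\in\Sigma(r,\beta,b,a,K)$ we obtain
\[
\deg_{h^2}(Y(E)) = h^2\cdot[Y(E)] = h^2\theta_2(E) = \mult(E) = r,
\]
which is independent of $E$. Thus all the cycles $Y(E)$ are effective $2$-cycles of one and the same degree $r$ with respect to the very ample class $h=[\cO_X(1)]$. Via the projective embedding $X\embeds\P^N$ determined by $\cO_X(1)$, these are effective $2$-cycles in $\P^N$ of degree $r$, hence they are parametrised by the Chow variety $\mathrm{Chow}_{2,r}(X)$, which is a projective scheme (of finite type) in arbitrary characteristic. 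Consequently the set $\{Y(E):[E]\in\Sigma\}$ lies in a scheme of finite type and is therefore bounded.

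I expect no serious obstacle here: the argument uses none of the semistability, nor the fixed value $\theta_1(E)=\beta$, nor the bound $\chi(E)\ge b$, these data being recorded only because $\Sigma$ is the set carried throughout the proof of Theorem \ref{thm:boundedness2Dim}. The single point deserving care is the identification $\theta_2(E)=[Y(E)]$ together with the resulting observation that fixing $\mult(E)$ fixes the degree of the support cycle; once this is granted, boundedness is immediate from the finiteness of the Chow variety of cycles of bounded degree.
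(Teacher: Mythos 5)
Your proposal is correct and takes essentially the same route as the paper: both arguments observe that fixing $\mult(E)=r$ fixes the $h$-degree of the support cycle (the paper writes this as $\mult(E)=\sum \nu_{Y,E}\mult(Y)=r$, you as $h^2\theta_2(E)=\deg_{h^2}(Y(E))=r$), and then appeal to a classical finiteness statement for effective cycles of bounded degree. The only difference is the reference used for that final step --- the paper cites Grothendieck's lemma \cite[Lemme 2.4]{Grothendieck-theHilbertScheme} on the boundedness of subvarieties of bounded degree, whereas you invoke projectivity of the Chow variety of $2$-cycles of degree $r$, which serves the same purpose here since only a set-theoretic, finite-type parametrisation of the cycles (with their multiplicities bounded by $r$) is required.
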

\begin{proof}
For any $E \in \Sigma$, the multiplicity $\mult(E)$ is given by
\begin{align*}
    \mult(E) = \sum \nu_{Y,E} \mult(Y).
\end{align*}
As the multiplicity of each $E \in \Sigma$ is fixed to be $r$, it follows that the multiplicities $\nu_{Y,E}$ and each $\mult(Y)$ in the sum is bounded. 

Let $\mathfrak{Y}$ be the set of all prime cycles in the two-dimensional support cycle of $E$ with $[E] \in \Sigma$. As the multiplicity function (with respect to $h^2$) is bounded on $\mathfrak{Y}$, it follows by Chow's Lemma (see \cite[Lemme 2.4]{Grothendieck-theHilbertScheme}) that $\mathfrak{Y}$ is a bounded family.
\end{proof}
}

\begin{proof}[Proof of Theorem \ref{thm:boundedness2Dim}]\  

\noindent \textbf{Step 1.} Reduction to the $(S_{2})$ case.

If $E^{DD}$ is the reflexive hull of $E$, then there is a short exact sequence
\begin{equation}\label{eq:reflexiveDD}
0 \to E \to E^{DD} \to T \to 0,
\end{equation} with $T$ supported in codimension $\ge 2$ in $\Supp(E)$. This gives $$\chi(E^{DD}) \ge \chi(E) \ge b.$$ Therefore $[E^{DD}] \in \Sigma(r,\beta,b,a,K)$ as soon as $[E] \in \Sigma$, and if we bound the classes of $E^{DD}$, this will bound $\chi(E)$ and $\chi(T)$. Thus the zero-dimensional sheaves $T$ will be bounded, and from \eqref{eq:reflexiveDD} the classes $[E] \in \Sigma$ will also be bounded.

Hence in what follows we may assume that each $[E] \in \Sigma$ is an $(S_2)$-sheaf.  We argue now by induction on $r=\mult(E)$. \medskip

\noindent\textbf{Step 2.} Base case $r=1$. \medskip

Since $r=1$, for any $[E] \in \Sigma(r,\beta,b,a,K)$ there is no subsheaf $F \subset E$ with $\mult(F) < \mult(E)$.  Hence the pure dimensional coherent sheaves $E$ with $[E] \in \Sigma(r,\beta,b,a,K)$ are slope semistable with respect to any ample polarization on $X$ and in particular with respect to $h$. Thus their isomorphism classes form a bounded set by \cite[Theorem 1.8]{Maruyama-Construction}. \medskip

\noindent\textbf{Step 3.} Induction step.\medskip

\noindent\textbf{Claim 1.} There is some $m_{0} = m_{0}(r,\beta,b,a,K)$ such that for every  $(S_2)$-sheaf $[E] \in \Sigma$ we have $$H^{0}(X,E(m)) \ne 0$$ for $m \geq m_{0}$.\medskip

To prove this, first we check that there is some $m' = m'(r,\beta,b,a,K)$ such that for every $[E] \in \Sigma$ we have $H^{2}(X,E(m)) = 0$ for $m \geq m'$. Let $Y$ be the scheme theoretic support of $E$.
By Lemma \ref{lem:duality} we have $$H^{2}(X,E(m)) \cong H^{0}(X,E^{D}(-m))^{\vee}.$$ Suppose there is a non-zero section in $H^{0}(X,E^{D}(-m))$. This section seen as a morphism $\cO_X\to E^{D}(-m)$ factors through a closed subscheme $Z$ of $Y$ and induces injective morphisms $\cO_{Z} \to E^{D}(-m)$ 
and $\cO_{Z}(m) \to E^{D}$. Denote by $F$ the image of the dual morphism $E\to (\cO_Z)^D(-m)$. We have on one hand 
$$\mu^{\alpha}(E) - a\le\mu^{\alpha}(F)$$
for some $\alpha\in K_1$, since $E$ is $(a,K)$-semistable. 
On the other hand from the inclusions $F\subset (\cO_Z)^D(-m)$, and $\cO_Z\subset(\cO_Z)^{DD}$,  we get
\begin{align*}
\mu^{\alpha}(F)\le {}& \mu^\alpha((\cO_Z)^D(-m))= -\mu^\alpha((\cO_Z)^{DD}(m)) 
\le 
-\mu^\alpha(\cO_Z(m))\\ = {}& -\mu^{\alpha}(\cO_Z) - m \frac{\alpha h \theta_2(\cO_Z)}{r'},
\end{align*}
where $r'$ is the multiplicity of $Z$.
Thus 
$$m \frac{\alpha h \theta_2(\cO_Z)}{r'}\le -\mu^{\alpha}(E) +a-\mu^{\alpha}(\cO_Z).$$
By Lemma \ref{lem:boundedSupp}
we know that the support cycle of $E$ is bounded, hence also the cycle $[Z]$ is bounded. Since $Z$ is moreover pure, it follows using Grothendieck's boundedness result \cite[Lemme 2.5]{Grothendieck-theHilbertScheme} that $\deg_{\tilde h}(\cO_Z)$ is bounded from below for any integer ample class $\tilde h\in\Amp^1(X)$. Any $\alpha \in K_1$ may be expressed  as a positive combination of finitely many fixed integer ample $1$-classes. We thus have a uniform lower bound, say $-C$, for $\deg_{\alpha}(\cO_Z)$, when $\alpha$ runs in $K$. Hence
$$m \frac{\alpha h \theta_2(\cO_Z)}{r'}\le -\mu^{\alpha}(E) +a+\frac{C}{r'},$$
 which leads to the existence of $m'$ as desired. 

Thus for $m \ge m'$, we have 
\begin{align*}
h^{0}(X,E(m)) ={}& \chi(E(m)) + h^{1}(X,E(m)) - h^{2}(X,E(m)) \\
\ge{}& \chi(E(m)) \geq r\frac{m^{2}}{2}+ m \beta h + b,
\end{align*}
hence the claim.\medskip

\textbf{Claim 2.} There is some $m_{1} = m_{1}(r,\beta,b,a,K)$ such that for every  $(S_2)$-sheaf $[E] \in \Sigma$ there is some irreducible component $Y_1$ of $\Supp(E)$ and a non-trivial morphism $\cO_{Y_1} \to E(m_1)$.\medskip

To prove this, let $\sigma\in H^{0}(X,E(m_0))$ be some non-zero section, which exists by Claim 1. Let $Y$ be the scheme theoretic support of $E$ and let $[Y]=\sum_{i=1}^p\nu_i[Y_i]$ be its associated two-dimensional cycle (with positive coefficients). 
 The section $\sigma$ seen as a morphism $\cO_X\to E(m_0)$ factors through a closed subscheme $Z$ of $Y$ and induces an injective morphism $\cO_{Z} \to E(m_0)$. The scheme $Z$ is pure two-dimensional, since $E$ is pure. We suppose that $Y_1$ is an irreducible component of the associated cycle $[Z]$ of $Z$.
 
We first check that the scheme $Z$ is bounded. 
We have seen that the support cycle $[Z]$ of $Z$ is bounded when $[E]$ belongs to $\Sigma$, by Lemma \ref{lem:boundedSupp}. Thus the function $\deg_{\tilde h^2}(\cO_Z)$ is bounded for any rational ample class $\tilde h\in\Amp^1(X)$. 
By Grothendieck's Lemma it follows that the function $\deg_{\tilde h}(\cO_Z)$ is lower bounded when $[E]$ belongs to $\Sigma$ and that an upper bound for such a function would imply boundedness for $(\cO_Z)_{(2)}=\cO_Z$. This will be achieved if we show that $\theta_1(\cO_Z)$ is bounded. 

The existence of an  injective morphism $\cO_{Z} \to E(m_0)$ combined with  the semistability condition on $E$  gives an upper bound for $\deg_{\alpha}(\cO_Z)$ as follows.
Let $r'$ be the multiplicity of $Z$ (with respect to $h$). By equation \eqref{eq:subsheaf} have 
$$\mu^{\alpha}(\cO_Z)= \mu^{\alpha}(\cO_Z(-m_0)) + m_0 \frac{\alpha h \theta_2(\cO_Z)}{r'} \le \mu^{\alpha}(E)+ m_0 \frac{\alpha h \theta_2(\cO_Z)}{r'}+\frac{(r-r')a}{r'},$$ 
hence
$$\deg_{\alpha}(\cO_Z) \le 
\frac{r'}{r}\deg_{\alpha}(E)+ m_0 \alpha h \theta_2(\cO_Z)+(r-r')a,$$ which gives 
an upper bound for $\deg_{\alpha}(\cO_Z)$. 
After possibly cutting $K$ into finitely many smaller pieces, we may assume that there exist
 ample classes $h_1,\ldots,h_\rho\in \Amp^1(X)$ spanning $N^1(X)$, such that $K$ lies in the interior of the convex cone generated by $h_1,\ldots,h_\rho$. In particular we may write $\alpha $ as a combination $\sum_{j=1}^\rho a_jh_j$ with positive coefficients $a_j>0$. 
 Using the lower bounds on the degrees of $\theta_1(\cO_Z)$ with respect to 
 the classes $h_j$ as well as the upper bound with respect to $\alpha$ and Lemma \ref{lem:convexity} we get the desired boundedness for $\theta_{1}(\cO_Z)$ and consequently for $\cO_Z$.

Let now $Y_1$ be any irreducible component of $Z$. Then the sheaf $\cH om_X(\cO_{Y_1}, \cO_Z)$ is non-trivial and is bounded since $\cO_{Y_1}$ and $ \cO_Z$ are bounded. There exists therefore some integer $m''$ depending only on $(r,\beta,b,a,K)$ such that $$H^0(X, \cH om(\cO_{Y_1}, \cO_Z)(m''))\ne0.$$ We thus get a  non-trivial morphism $\cO_{Y_1}\to \cO_Z(m'')$, whence the desired non-trivial morphism  $\cO_{Y_1}\to E(m_0+m'')$. Claim 2 is proved. \medskip

\textbf{Claim 3.} Putting $m=m_1$ as in Claim 2, one can associate to each $E$ with $[E]\in\Sigma$  a saturated subsheaf $L_{E}$ of $ E(m)$ such that the scheme theoretic support $\SchSupp(L_E)$ of $L_E$ is integral, $L_E$ has rank one on its support, $\theta_{1}(L_{E})$ is bounded  and  $\chi(L_{E})$  is upper bounded in terms of $(r,\beta,b,a,K)$.\medskip

For the proof we take up the notations of  Claim 2, and consider the saturation $L_E$ of $\cO_{Y_1}$ in $E(m)$, where $m=m_1$, and let $E_{1} := E(m)/L_{E}$.  Then the multiplicity of  $L_{E}$ (with respect to $\cO_{X}(1)$) 
is bounded since $L_{E}$ is a subsheaf of $E(m)$ and the mutiplicity of  $E(m)$ is bounded. 

For the boundedness of $\theta_{1}(L_{E})$, we will use Lemma \ref{lem:convexity}. For this we have to bound degrees of $\theta_{1}(L_{E})$ as we have done for the degrees of $\theta_{1}(\cO_Z)$
in the proof of Claim 2. 
Note first that we easily get a lower bound for the degree of $\theta_{1}(L_{E})$ with respect to any ample class $\tilde h\in\Amp^1(X)$, since   $\cO_{Y_1}$ injects into $L_{E}$ with at most $1$-dimensional quotient and therefore $\deg_{\tilde h}(L_E)\ge\deg_{\tilde h}\theta_1(\cO_{Y_1})$,  the latter degree being bounded by Lemma \ref{lem:boundedSupp}.  We next obtain an upper bound on $\deg_\alpha(L_{E})$ for some $\alpha\in K$ using the fact that $E$ is $(a,K)$-semistable exactly 
as
in the proof of Claim 2 and the desired boundedness for $\theta_{1}(L_{E})$ follows as before. 

Finally $\chi(L_E)$ is upper bounded by Lemma \ref{lem:rank1inequality} and Claim 3 is proved.\medskip

\textbf{Claim 4.} The set of isomorphism classes of sheaves $E_{1} = E(m)/L_{E}$ with $[E] \in \Sigma$ is bounded in terms of $(r,\beta,b,a,K)$.\medskip

For the proof we first note that $E_1$ is either zero or purely $2$-dimensional, since $L_E$ is saturated in $E(m)$. The case $E_1=0$ being clear, we place ourselves in the case when $E_1$ is pure of dimension $2$.

Next note that the multiplicity $r_{1}$ of $E_{1}$ equals $r - \deg_{h^{2}}(L_{E})$, hence is bounded. Similarly we get boundedness for $\theta_{1}(E_{1})$ which equals  $$\theta_{1}(E(m))-\theta_{1}(L_{E}) = \theta_{1}(E)+mh\theta_2(E) - \theta_{1}(L_{E}).$$ 
As the last coefficient of the Hilbert coefficient of $L_{E}$ (with respect to $h$) is upper bounded, and that of $E(m)$ is lower bounded, we get a lower bound $b_{1}$ for the last coefficient of the Hilbert polynomial of $E_{1}$. 

Let $F$ be a pure quotient of $E_{1}$. This is also a quotient of $E(m)$, so since $E$ is $(a,K)$-semistable, there exists some $\alpha \in K_1$ such that $$\mu^{\alpha}(F(-m)) \ge \mu^{\alpha}(E) - a.$$ Using $\deg_{\alpha}(F(-m)) = \deg_{\alpha}(F) - m\alpha h \theta_2(F)$ and the additivity of the degree function $\deg_\alpha$ in the short exact sequence
\[
    0 \to L_E(-m) \to E \to E_1(-m) \to 0,
\]
we obtain

\begin{align*}
\mu^{\alpha}(F) \ge m \frac{\alpha h \theta_2(F)}{\mult(F)} + \frac{1}{r}(\deg_{\alpha}(E_{1}(-m)) + \deg_{\alpha}(L_{E}(-m))-a.
\end{align*}
Using the identity $\deg_{\alpha}(E_{1}(-m)) = \deg_{\alpha}(E_{1}) - m\alpha h \theta_2(E_1)$, we get by a straightforward computation that $\mu^{\alpha}(F)$ is greater or equal than
\[
 \mu^{\alpha}(E_{1}) - \left[\frac{r-r_{1}}{r}\mu^{\alpha}(E_{1}) + m\frac{\alpha h \theta_2(E_{1})}{r} - m\frac{\alpha h \theta_2(F)}{\mult(F)} - \frac{\deg_{\alpha}(L_{E}(-m))}{r} + a\right].
\]

Since the last bracket is bounded, we now get that $E_{1}$ is $(a_{1},K)$-semistable for a suitable $a_{1}$, hence $[E_{1}]$ is in a finite union of sets of type $\Sigma(r_{1},\beta_{1},b_{1},a_{1},K)$ and hence bounded by the induction hypothesis.\medskip

\textbf{Claim 5.} The set $\Sigma$ is bounded.\medskip

Keeping our previous notations we see that by Claim 4 we obtain also a lower bound on $\chi(L_E)$, hence boundedness for the sheaves $L_E$ by Step 2. Together with the boundedness for $E_1$ this implies the boundedness of $\Sigma$ and we are done.
\end{proof}

\begin{cor}\label{cor:UniformBoundednessD=2}
    Let $\gamma$ be a numerical class of $2$-dimensional coherent sheaves on $X$. Then any compact set $K\subset \Amp^2(X)\times\Amp^1(X)$ is a bounded stability parameter set for $\gamma$. 
\end{cor}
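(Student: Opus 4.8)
The plan is to obtain the corollary as an immediate consequence of Theorem \ref{thm:boundedness2Dim}, once we observe that fixing the numerical class $\gamma$ pins down every discrete invariant appearing in the definition of $\Sigma$. First I would record that, as explained in Section \ref{sec:preparations}, the isomorphism $\ch : K(X)_{\Q\,\num} \to N^*(X)_\Q$ shows that the numerical class $\gamma \in K(X)_{\num}$ of a $2$-dimensional sheaf determines its full numerical Chern character, and hence all of the classes $\theta_j(\gamma)$. In particular, for every coherent sheaf $E$ of class $\gamma$ the multiplicity $r := \mult(E) = h^2\theta_2(\gamma)$, the class $\beta := \theta_1(E) = \theta_1(\gamma)$, and the Euler characteristic $\chi_0 := \chi(E) = \theta_0(\gamma)$ are all fixed, independent of the particular sheaf $E$.

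Next, to verify Definition \ref{def:BoundedStabilitySet} I would fix an arbitrary $a \in \R$ and consider the set $\mathcal S_a$ of isomorphism classes of $2$-dimensional $(a,K)$-semistable sheaves of class $\gamma$ on $X$. By the previous paragraph, every $E$ with $[E] \in \mathcal S_a$ is $(a,K)$-semistable of multiplicity $r$, satisfies $\theta_1(E) = \beta$, and has $\chi(E) = \chi_0 \ge \chi_0$. Comparing with the Notation preceding Theorem \ref{thm:boundedness2Dim}, this yields the inclusion $\mathcal S_a \subseteq \Sigma(r,\beta,\chi_0,a,K)$. Since $K \subset \Amp^2(X)\times\Amp^1(X)$ is an arbitrary compact set and Theorem \ref{thm:boundedness2Dim} is stated at this level of generality, no further reduction to the form $\{h^2\}\times K_1$ is needed here. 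Invoking that theorem with the fixed data $(r,\beta,\chi_0,a,K)$ shows $\Sigma(r,\beta,\chi_0,a,K)$ is bounded, and hence so is its subfamily $\mathcal S_a$ (a subset of a bounded family is bounded, using the same parametrising scheme of finite type). As $a \in \R$ was arbitrary, this establishes that $K$ is a bounded stability parameter set for $\gamma$.

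I expect essentially no genuine obstacle beyond bookkeeping, since all of the real work is already contained in the proof of Theorem \ref{thm:boundedness2Dim}. The only point requiring a moment of care is the identification of $r$, $\beta$, and $\chi$ as invariants of the class $\gamma$ rather than of an individual sheaf, together with the elementary observation that the condition $\chi(E)\ge \chi_0$ defining $\Sigma$ is satisfied (as an equality) by every sheaf of class $\gamma$, so that the slightly larger set $\Sigma$ indeed contains $\mathcal S_a$.
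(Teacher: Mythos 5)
Your proposal is correct and is exactly the deduction the paper intends: Corollary \ref{cor:UniformBoundednessD=2} is stated without proof because it follows immediately from Theorem \ref{thm:boundedness2Dim} once one notes that the class $\gamma$ fixes $r=h^2\theta_2(\gamma)$, $\beta=\theta_1(\gamma)$ and $\chi=\theta_0(\gamma)$, so the $(a,K)$-semistable sheaves of class $\gamma$ form a subset of $\Sigma(r,\beta,\chi,a,K)$. Your observations that no reduction to $\{h^2\}\times K_1$ is needed at this stage (it is internal to the theorem's proof) and that $\chi(E)\ge b$ holds with equality are both accurate.
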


{As a consequence we also obtain a Bogomolov type inequality as in \cite[Corollary 2.10]{Maruyama81boundedness}. For the statement we need some notation. Let $E$ be a pure 2-dimensional sheaf on $X$ and denote by $Y_E$ its scheme-theoretic support. Consider any (finite, surjective) linear projection $\pi : Y_E \to \P^2$ with respect to $\cO_X(1)$, and set
\[
    \Delta_h(E) := 2\rk(\pi_*E)c_2(\pi_*E) - (\rk(\pi_*E)-1)c_1(\pi_*E)^2.
\]
This is well defined since the Chern character of $\pi_*E$ depends only on (the numerical type of) $E$. In fact one can also write $\Delta_h(E)$ in terms of the classes $\theta_j(E)$, $0 \le j \le 2$.

\begin{cor}\label{cor:BogomolovIneq} 
Let $a \in \R$ and let $K \subset \Amp^1(X)$ be a compact. There is a constant $C(r,a,K)$ depending only on $r, a$ and $K$ such that 
\[
    \Delta_h(E) \ge C(r,a,K)
\]
for all 2-dimensional $(a,K)$-semistable sheaves $E$ of fixed multiplicity $r$ on $X$.
\end{cor}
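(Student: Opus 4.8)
The plan is to read the statement as a uniform Bogomolov inequality for the pushforwards $G:=\pi_{*}E$ to the smooth surface $\P^2$. By construction $\Delta_h(E)=\Delta(G)$ is the ordinary Bogomolov discriminant of $G$, whose rank is $\rk(G)=\mult(E)=r$; moreover $\Delta_h$ is invariant under twisting $E$ by a power of $\cO_X(1)$, since $\pi_{*}(E(m))=G(m)$ and $\Delta$ is twist-invariant. As in Remark \ref{rem:compacta} I may assume $K=\{h^2\}\times K_1$. Conceptually the inequality should hold because $\Delta$ is independent of the chosen polarization, so that $\mu^\alpha$-semistability with $\alpha$ ample already forces a Bogomolov bound on the (possibly singular) support; the passage to $\pi_{*}E$ on $\P^2$ is what makes this rigorous in the singular case, exactly as the $d^6$-term in \eqref{eq:bogomolov} accounts for. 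Technically I would argue by induction on the multiplicity $r$.

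The base case $r=1$ is Lemma \ref{lem:rank1inequality}: here $E$ has rank one on its integral support and is automatically $\mu^h$-semistable, and \eqref{eq:bogomolov} (i.e.\ \cite[Lemma 2.12]{Pavel2022Thesis}) gives $\Delta_h(E)=\Delta(\pi_{*}E)\ge -1$, a constant. More generally, the same inequality bounds $\Delta(\pi_{*}L)$ below by a constant depending only on $\mult(L)$ whenever $L$ is of rank one on an integral surface.

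For the induction step I would reuse the construction from the proof of Theorem \ref{thm:boundedness2Dim}: after a harmless twist, Claims 2--3 produce a saturated subsheaf $L\subset E(m)$ of rank one on an integral surface, with purely two-dimensional quotient $Q=E(m)/L$ of strictly smaller multiplicity $r_1=r-\mult(L)$, which is $(a_1,K)$-semistable for some $a_1=a_1(r,a,K)$ (Claim 4). Pushing $0\to \pi_{*}L\to \pi_{*}(E(m))\to \pi_{*}Q\to 0$ forward to $\P^2$ and applying the additivity of the discriminant on a surface, I obtain
\[
  \frac{\Delta_h(E)}{r}=\frac{\Delta(\pi_{*}L)}{\mult(L)}+\frac{\Delta_h(Q)}{r_1}-\frac{\mult(L)\,r_1}{r}\bigl(\mu^h(L)-\mu^h(Q)\bigr)^2,
\]
where the normalising shift in $\mu_{\P^2}(\pi_{*}F)=\mu^h(F)-\tfrac32$ cancels in the slope difference. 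The first term is controlled by the base case and the second by the induction hypothesis, so everything reduces to bounding the cross-term.

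I expect this cross-term to be the main obstacle: one must bound $\bigl(\mu^h(L)-\mu^h(Q)\bigr)^2$ by a constant depending only on $r,a,K$. Semistability of $E(m)$ bounds this slope difference from above immediately, but the lower bound is delicate, because $(a,K)$-semistability is phrased through the varying classes $\alpha\in K_1$ while the discriminant only sees the fixed class $h$, and a sheaf can be $\mu^\alpha$-semistable yet far from $\mu^h$-semistable. I would resolve this by converting $\mu^\alpha$-control into $\mu^h$-control using the boundedness of the support cycles (Lemma \ref{lem:boundedSupp}, available since $\mult$ is fixed) together with Theorem \ref{thm:boundedness2Dim}: these confine the relevant reduced degrees to a compact set and so bound the discrepancy between the $\alpha$- and $h$-slopes of $L$ and $Q$. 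This uniform cross-term control, which is precisely where the compactness of $K$ and the already-established boundedness enter, is the technical heart of the argument.
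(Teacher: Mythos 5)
Your route (induction on the multiplicity $r$ through an extension $0\to L\to E(m)\to Q\to 0$ plus the discriminant additivity identity on $\P^2$) is genuinely different from the paper's, which instead applies Maruyama's boundedness-implies-Bogomolov argument \cite[Corollary 2.10]{Maruyama81boundedness} to the pushforward family $\{\pi_*E\}$ on $\P^2$, with Theorem \ref{thm:boundedness2Dim} as the boundedness input. Your additivity identity is correct (on $\P^2$ the Hodge-index term is exactly the square of the slope difference, and the Todd shift $3/2$ cancels), and the base case is fine apart from the constant being $-d^6-\beta_d$ rather than $-1$. The gap is in the induction step, and it occurs before the cross-term you single out: you invoke Claims 2--4 of the proof of Theorem \ref{thm:boundedness2Dim} with constants $m$ and $a_1$ ``depending only on $(r,a,K)$'', whereas in the paper those constants are $m_1(r,\beta,b,a,K)$ and $a_1(r,\beta,b,a,K)$: they are uniform only over $\Sigma(r,\beta,b,a,K)$, i.e.\ only after fixing the class $\theta_1(E)=\beta$ and a lower bound $b\le\chi(E)$. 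The corollary quantifies over \emph{all} $(a,K)$-semistable sheaves of multiplicity $r$, a family in which $\theta_1$ runs through infinitely many classes and $\chi$ is unbounded below; this family is genuinely unbounded, and no ``harmless twist'' repairs that: twisting can only normalize $\deg_h(E)$, it neither confines $\theta_1(E)$ as a class nor bounds $\chi$ from below, and in the pure setting a twist by $\cO_X(m)$ shifts the slope of a quotient $F$ by $m\,\alpha h\theta_2(F)/h^2\theta_2(F)$, a quantity depending on the support direction of $F$, so when $\Supp E$ is reducible it does not even preserve $(a,K)$-semistability with the same constant $a$.

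The same problem makes your cross-term plan circular: you propose to control $\bigl(\mu^h(L)-\mu^h(Q)\bigr)^2$ by using Theorem \ref{thm:boundedness2Dim} to ``confine the relevant reduced degrees to a compact set'', but that theorem yields boundedness only under the hypotheses (fixed $\theta_1$, $\chi$ bounded below) that are precisely what is unavailable over the corollary's family. What your sketch is missing is a mechanism for handling the unbounded directions of the family, and that is exactly what the argument of \cite[Corollary 2.10]{Maruyama81boundedness}, cited by the paper, is built on: push forward to $\P^2$, where $\Pic(\P^2)=\Z\xi$ lets a twist put $c_1(\pi_*E)$ into finitely many values; then Riemann--Roch on $\P^2$ gives $\Delta_h(E)=c_1^2+3r\,c_1\xi+2r^2-2r\chi(E)$, so either $\chi(E)<b$, in which case $\Delta_h(E)$ is automatically bounded below, or $\chi(E)\ge b$, in which case one is in the regime where Theorem \ref{thm:boundedness2Dim} produces a bounded family and hence finitely many values of $\Delta_h$. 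Without a dichotomy of this kind --- splitting according to the size of $\chi$, with the unbounded regime being the one where the inequality is automatic --- your induction cannot be run uniformly.
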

\begin{proof}
Let $\cF'$ be the family of 2-dimensional $(a,K)$-semistable sheaves $E$ of fixed multiplicity $r$ on $X$, and let $\cF = \{ (\pi : Y_E \to \P^2)_*E \mid E \in \cF' \}$ be the induced family of sheaves on $\P^2$. Then the result follows by applying the argument in \cite[Corollary 2.10]{Maruyama81boundedness} to the family $\cF$ and using Theorem \ref{thm:boundedness2Dim}. We omit the details.
\end{proof}
\begin{Rem}
If $a = 0$ and $K = \{ h \}$, then one can take $C(r,a,K) = -r^6-\beta_r$, cf. \cite[Lemma 2.12]{Pavel2022Thesis}, where $\beta_r$ is as defined in Lemma \ref{lem:rank1inequality}.
\end{Rem}
}

\section{A generalized Grauert-Mülich-Spindler Theorem}\label{section:GrauertMuelich}

{From now on we assume that $X$ is defined over an algebraically closed field of characteristic zero.} {In this section we prove an analogue of the Grauert-Mülich-Spindler Theorem for $(a,\alpha h^2,\alpha h)$-semistable  $d$-dimensional sheaves on $X$, where $\alpha \in \Amp^{d-2}(X)$ and $a \in \R$ are fixed. This is our main ingredient in showing uniform boundedness results in higher dimensions.}

Set $\Pi = |\mathcal O_X(1)|$ and consider the 
 incidence variety $Z = \{ ([D],x) \in \Pi \times X  \mid x \in D \}$ with its natural projections:
\begin{center}
\begin{tikzcd}[column sep=normal]
Z \ar[d,"p"] \ar[r,"q"] &  X \\  \Pi
\end{tikzcd}
\end{center}

One can show that $q: Z \to X$ is a projective bundle. Also, if $[D] \in \Pi$ is a closed point, then the fibre $p^{-1}([D])$ is identified by $q$ with $D \subset X$. 

\subsection*{Relative HN filtration.} Let $E$ be a purely $d$-dimensional sheaf on $X$. By the analogue \cite[Theorem 3.2]{MegyPavelToma} of \cite[Thm.~2.3.2]{HL} for our notion of $\mu^\alpha_\beta$-semistability, there exists a relative Harder-Narasimhan filtration of $\cF = q^*E$
\begin{align*}
    0 = \cF_0 \subset \cF_1 \subset \ldots \subset \cF_l = \cF
\end{align*}
and an open subset $U \subset \Pi$ such that

1. $\cF_i/\cF_{i-1}$ is flat over $U$ for each $i=1,\ldots,l$,

2. $0 = \cF_0|_{D} \subset \cF_1|_{D}  \subset \ldots \subset \cF_l|_{D} = E|_{D}$ is the Harder-Narasimhan filtration of $E|_{D}$ for each $D \in U$, with $\mu_1 > \ldots > \mu_l$, where $\mu_i = \mu^\alpha_{\alpha h}((\cF_i/\cF_{i-1})|_{D})$. As $U$ is connected and the $\cF_i/\cF_{i-1}$ are flat over $U$, the $\mu_i$ do not depend on $D \in U$. Moreover, by Bertini's Theorem, we may assume that each $D \in U$ is an intregral smooth divisor on $X$.

\begin{thm}\label{thm:GMSthmHD}
    Under the above notation, assume further that $E$ is $d$-dimensional and $(a,\alpha h^2,\alpha h)$-semistable on $X$. Then, for general $D \in |\mathcal O_X(1)|$ and $i = 1,\ldots,l-1$, one of the following holds:
    \begin{enumerate}
        \item $\mu^\alpha_{\alpha h}((q^*E/\cF_i)|_D) \ge \mu^\alpha_{\alpha h}(E) - \frac{1}{2} - a$, or
        \item $\mu_i - \mu_{i+1} \le 1$.
    \end{enumerate}
\end{thm}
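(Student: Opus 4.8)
The plan is to adapt the classical Grauert-Mülich-Spindler argument from \cite[Section 3.1]{HL} to the $\mu^\alpha_{\alpha h}$-formalism, working on the incidence variety $Z$. First I would fix a general $D \in U$, which by Bertini we may take to be a smooth integral divisor, so that the restrictions $G_m := (\cF_m/\cF_{m-1})|_D$ are $\mu^\alpha_{\alpha h}$-semistable pure $(d-1)$-dimensional sheaves on $D$ with strictly decreasing slopes $\mu_1 > \cdots > \mu_l$. Fix $i \in \{1,\dots,l-1\}$; the goal is to show that either the descended-quotient bound (1) or the gap bound (2) holds.

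The technical core is the construction of the second fundamental form of the relative filtration. Differentiating the relative Harder-Narasimhan subsheaf $\cF_i \subset q^*E$ in the normal direction $N_{D/X} \cong \cO_D(1)$---equivalently, measuring the failure of $\cF_i$ to be constant along the fibres of the projective bundle $q \colon Z \to X$---should produce a canonical $\cO_D$-linear homomorphism
\[
  \phi_i \colon \cF_i|_D \longrightarrow (q^*E/\cF_i)|_D \otimes \cO_D(1).
\]
This is the step I expect to be the main obstacle, and the one place where $\car(k)=0$ is indispensable: in positive characteristic, Frobenius pull-backs create relative destabilising subsheaves that do not descend, and both the construction and the integrability used below break down.

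The argument then splits on whether $\phi_i$ vanishes. If $\phi_i = 0$, the subsheaf $\cF_i$ is horizontal for $q$ and (using $\car(k)=0$) descends to a subsheaf $F \subset E$ with $\cF_i = q^*F$, so that $(q^*E/\cF_i)|_D = (E/F)|_D$ is the restriction of the pure quotient $E/F$. Feeding $E/F$ into the $(a,\alpha h^2,\alpha h)$-semistability of $E$ gives $\mu^{\alpha h}_{\alpha h^2}(E/F) \ge \mu^{\alpha h}_{\alpha h^2}(E) - a$; combined with the restriction identity below (applied to $E/F$, and to $E$ on the right-hand side of (1), where $\mu^\alpha_{\alpha h}(E)$ is read as the $X$-slope $\mu^{\alpha h}_{\alpha h^2}(E)$) this yields (1). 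If instead $\phi_i \ne 0$, then passing to Harder-Narasimhan graded pieces produces a nonzero map $G_j \to G_k \otimes \cO_D(1)$ with $j \le i < k$; since the $G_m$ are semistable, this forces $\mu_j \le \mu_k + 1$, and combining with $\mu_i \le \mu_j$ and $\mu_k \le \mu_{i+1}$ gives $\mu_i - \mu_{i+1} \le 1$, which is (2).

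Finally I would pin down the two numerical constants through slope identities. For a $d$-dimensional sheaf $Q$ on $X$, Grothendieck-Riemann-Roch for $\iota \colon D \hookrightarrow X$ gives $\ch(\iota_*(Q|_D)) = \ch(Q)(1 - e^{-h})$; expanding this, pairing against $\alpha$ and $\alpha h$, and using $\theta_j(Q) = 0$ for $j > d$ yields $\deg_\alpha(Q|_D) = \deg_{\alpha h}(Q) - \tfrac12\deg_{\alpha h^2}(Q)$ and $\deg_{\alpha h}(Q|_D) = \deg_{\alpha h^2}(Q)$, hence $\mu^\alpha_{\alpha h}(Q|_D) = \mu^{\alpha h}_{\alpha h^2}(Q) - \tfrac12$---this is the source of the $-\tfrac12$ in (1). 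Likewise, twisting a $(d-1)$-dimensional sheaf $G$ on $D$ by $\cO_D(1)$ leaves its multiplicity $\deg_{\alpha h}(G)$ unchanged while increasing $\deg_\alpha(G)$ by exactly $\deg_{\alpha h}(G)$, so $\mu^\alpha_{\alpha h}(G \otimes \cO_D(1)) = \mu^\alpha_{\alpha h}(G) + 1$---this is the source of the $+1$ in (2). The routine bookkeeping in these Chern/Todd expansions I would relegate to a short computation.
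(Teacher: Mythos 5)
Your skeleton matches the paper's proof: relative Harder--Narasimhan filtration on the incidence variety, a dichotomy between ``$\cF_i$ descends to $X$'' (which, fed into $(a,\alpha h^2,\alpha h)$-semistability, gives (1)) and ``there is a nonzero second-fundamental-form-type map'' (which gives (2)), with the constants $-\tfrac12$ and $+1$ coming from restriction identities. Your slope bookkeeping is exactly the paper's Lemma \ref{lemma:destabilizingRestriction}, i.e.\ $\mu^\alpha_{\alpha h}(Q|_D)=\mu^{\alpha h}_{\alpha h^2}(Q)-\tfrac12$, and your reading of $\mu^\alpha_{\alpha h}(E)$ in (1) as the $X$-slope occurring in the semistability hypothesis is the interpretation the paper itself uses (compare Corollary \ref{cor:restrictionOfType}).

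However, the step you yourself flag as the main obstacle is a genuine gap, and the route you sketch for it would fail as stated. There is no canonical map $\phi_i\colon \cF_i|_D\to (q^*E/\cF_i)|_D\otimes \cO_D(1)$ obtained by ``differentiating in the normal direction $N_{D/X}\cong\cO_D(1)$''. The canonical object is the relative second fundamental form $\cT_{Z/X}\otimes\cF_i\to q^*E/\cF_i$ restricted to $D=p^{-1}([D])$, and its source involves $\cT_{Z/X}|_D$, whose fibre at $x\in D$ consists of first-order deformations of $D$ \emph{fixing the point} $x$ (sections of $\cO_X(1)$ vanishing at $x$, modulo $s_D$); this is a bundle of rank $h^0(X,\cO_X(1))-2$, not the rank-one normal bundle. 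Indeed, differentiating the filtration along a normal field $\nu$ is not $\cO_D$-linear in $\nu$, so it does not sheafify to an $N_{D/X}$-twisted map; and if you force such tensoriality you get a pairing $N_{D/X}\otimes\cF_i|_D\to (q^*E/\cF_i)|_D$, i.e.\ maps $\cF_i|_D\to (q^*E/\cF_i)|_D\otimes\cO_D(-1)$ --- the \emph{opposite} twist, which would yield the absurd bound $\mu_i+1\le\mu_{i+1}$. The two ingredients the paper uses to fill this hole are: (i) Flenner's descent criterion \cite[Prop.~1.11]{flenner1984restrictions} applied to $q\colon Z\to X$, which says that $\Hom_{\cO_Z}(\cT_{Z/X}\otimes\cF_i,\,q^*E/\cF_i)=0$ forces $\cF_i=q^*F$ (this is also exactly where $\car(k)=0$ enters, via $\Q$-schemes); and (ii) the Euler sequence of the projective bundle $q$ combined with the Koszul complex of the evaluation map, giving a surjection $\bigl(\Lambda^2H^0(X,\cO_X(1))\otimes\cO_X(-1)\bigr)\big|_D\twoheadrightarrow\cT_{Z/X}|_D$; it is this surjection, not the normal bundle, that converts a nonzero second fundamental form into a nonzero map $\cF_i|_D\to (q^*E/\cF_i)|_D\otimes\cO_D(1)$ and hence produces the $+1$ in (2). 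A smaller point: you split on the vanishing of $\phi_i$ for one fixed general $D$, but descent requires vanishing of the global form on $Z$; the paper sidesteps this by arguing contrapositively --- if (1) fails for some $D$, semistability forbids descent, so the global form is nonzero, and its restriction to a general fibre of $p$ is then nonzero.
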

\begin{proof}
Suppose that $\mu^\alpha_{\alpha h}((q^*E/\cF_i)|_D) < \mu^\alpha_{\alpha h}(E) - \frac{1}{2} - a$ for some $i$, and set $F' = \cF_i$, $F'' = \cF/F'$. See \cite[Prop.~1.11]{flenner1984restrictions} for the following result.
\begin{prop}
Let $Z$, $Y$ be $\Q$-schemes and $q: Z \to Y$ a surjective smooth mapping of finite presentation with geometrically connected fibers. Let $\cT_{Z/Y}$ be the relative tangent sheaf. If $E$ is a coherent sheaf on $Y$ and $F'\subseteq q^*E$ is a coherent subsheaf such that
\begin{align*}
    \Hom_{\cO_Z}(\cT_{Z/Y} \otimes F',F'') = 0, \quad F'' = q^*E/F',
\end{align*}
then there exists a unique coherent subsheaf $F$ of $E$ with $F' = q^*F$.
\end{prop}

We apply this result to the morphism $q : Z \to X$. Suppose for contradiction that $\Hom(\cT_{Z/X} \otimes F',F'') = 0$. Then there exists a subsheaf $F \subseteq E$ such that $F' = q^*F$, hence there is also a quotient $E \to Q$ such that $F'' = q^*Q$. As $Q|_D = F''|_D$, we have
\[
    \mu^\alpha_{\alpha h}(Q|_D) < \mu^\alpha_{\alpha h}(E) - \frac{1}{2} - a,
\]
which gives
\[
    \mu^\alpha_{\alpha h}(Q) < \mu^\alpha_{\alpha h}(E) - a,
\]
by using Lemma \ref{lemma:destabilizingRestriction} below. This contradicts the fact that $E$ is $(a,\alpha h^2,\alpha h)$-semistable.

\begin{Lem}\label{lemma:destabilizingRestriction}
We have $\mu^\alpha_{\alpha h}(F|_D) = \mu^{\alpha h}_{\alpha h^2}(F) - 1/2$ for general $D \in |\mathcal O_X(1)|$.
\end{Lem}
\begin{proof}
This uses the additivity of degree functions in short exact sequences. By using the short exact sequence
\begin{align*}
   0 \to F(-1) \to F \to F|_D \to 0,
\end{align*}
with $D \in |\mathcal O_X(1)|$ general, we obtain
\begin{align*}
    \deg_{\alpha h}(F|_D) &= \deg_{\alpha h^2}(F)  \\
     \deg_\alpha(F|_D)    &= \deg_{\alpha h}(F) - \frac{1}{2}\deg_{\alpha h^2}(F).
\end{align*}
Hence
\begin{align*}
    \mu^\alpha_{\alpha h}(F|_D) = \frac{\deg_{\alpha}(F|_D) }{\deg_{\alpha h}(F|_D)} = \mu^{\alpha h}_{\alpha h^2}(F) - \frac{1}{2}. 
\end{align*}
\end{proof}

Thus we can find a nonzero map $\cT_{Z/X} \otimes F' \to F''$, which restricted to a general fiber of $p$ gives a nonzero map
\begin{align*}
    \cT_{Z/X}|_{D} \otimes F'|_{D} \to F''|_{D}.
\end{align*}
Hence
\begin{equation}\label{eq: slopeIneqI}
    \mu^\alpha_{\alpha h,\min}(\cT_{Z/X}|_{D} \otimes F'|_{D}) \leq \mu^\alpha_{\alpha h,\max}(F''|_{D}).
\end{equation}
From here the proof follows exactly as in \cite[p. 66]{HL}.  Let $\cK$ be the kernel of the evaluation map
\[
    \ev: H^0(X,\cO_X(1)) \otimes \cO_X \to \cO_X(1). 
\]
Then $Z = \P(\cK^\vee)$ and we have the Euler sequence
\[
    0 \to \cO_{Z} \to q^* \cK \otimes p^*\cO(1) \to \cT_{Z/X} \to 0. 
\]
From this and the Koszul complex corresponding to the evaluation map $\ev$ above, we obtain a surjection
\[
 \left(\Lambda^2 H^0(X,\cO_X(1)) \otimes \cO_X(-1)\right)|_D \to \cT_{Z/X}|_{D}.
\]
Hence
\begin{align*}
     \mu^\alpha_{\alpha h,\min}(\cT_{Z/X}|_{D} \otimes F'|_{D}) \ge{}&  \mu^\alpha_{\alpha h,\min}(\Lambda^2 H^0(X,\cO_X(a)) \otimes \cO_X(-1) \otimes F'|_D)\\
    ={}& \mu^\alpha_{\alpha h,\min}(\cO_X(-1) \otimes F'|_D) \\
    ={}& \mu^\alpha_{\alpha h,\min}(F'|_D) - 1.
\end{align*}
Putting this inequality together with \eqref{eq: slopeIneqI} yields
\[
    \mu_i - \mu_{i+1} = \mu^\alpha_{\alpha h,\min}(F'|_D) - \mu^\alpha_{\alpha h,\max}(F''|_D) \le 1,
\]
since $\mu_i = \mu^\alpha_{\alpha h,\min}(F'|_D)$ and $\mu_{i+1} = \mu^\alpha_{\alpha h,\max}(F''|_D)$ by construction.
\end{proof}

\begin{cor}\label{cor:restrictionOfType}
Let $E$ be a $d$-dimensional $(a,\alpha h^2,\alpha h)$-semistable sheaf. Then there is some $a' \in \R$ depending only on $a$ and $\deg_{h^d}(E)$ such that $E|_D$ is  $(a',\alpha h,\alpha)$-semistable for general $D \in |\mathcal O_X(1)|$.
\end{cor}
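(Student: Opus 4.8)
The plan is to use the generalized Grauert-Mülich-Spindler Theorem \ref{thm:GMSthmHD} to control the Harder-Narasimhan filtration of the restriction $E|_D$ for general $D$, and to show that its slopes cannot spread out by more than a controlled amount. Concretely, for general $D \in |\mathcal O_X(1)|$ I would take the relative Harder-Narasimhan filtration $0 = \cF_0 \subset \cdots \subset \cF_l = q^*E$, so that $0 = \cF_0|_D \subset \cdots \subset \cF_l|_D = E|_D$ is the Harder-Narasimhan filtration of the pure $(d-1)$-dimensional sheaf $E|_D$ with respect to $\mu^\alpha_{\alpha h}$, with slopes $\mu_1 > \cdots > \mu_l$. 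Writing $s := \mu^\alpha_{\alpha h}(E|_D)$, Lemma \ref{lemma:destabilizingRestriction} identifies $s$ with $\mu^\alpha_{\alpha h}(E) - \tfrac12$, so that alternative (1) in Theorem \ref{thm:GMSthmHD} reads $\mu^\alpha_{\alpha h}((q^*E/\cF_i)|_D) \ge s - a$.

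Since every pure $(d-1)$-dimensional quotient $G$ of $E|_D$ satisfies $\mu^\alpha_{\alpha h}(G) \ge \mu_l$ (the minimal Harder-Narasimhan slope is attained on quotients), it suffices to produce an $a'$, depending only on $a$ and $\deg_{h^d}(E)$, with $\mu_l \ge s - a'$. To this end I would let $i_0$ be the largest index in $\{0,1,\ldots,l-1\}$ for which alternative (1) holds (put $i_0 = 0$ if it never holds). For every $i > i_0$ alternative (2) gives $\mu_i - \mu_{i+1} \le 1$, so telescoping yields $\mu_{i_0+1} - \mu_l \le l - 1$. If $i_0 \ge 1$, then the quotient $(q^*E/\cF_{i_0})|_D$ has graded pieces of slopes $\mu_{i_0+1} > \cdots > \mu_l$, whence its slope is at most $\mu_{i_0+1}$; combined with alternative (1) this gives $\mu_{i_0+1} \ge s - a$, and therefore $\mu_l \ge s - a - (l-1)$. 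If $i_0 = 0$, then $\mu_1 \ge s$ since $\mu_1$ is the maximal slope, and again $\mu_l \ge s - (l-1)$.

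It then remains to bound the length $l$ purely in terms of $\deg_{h^d}(E)$, and this is the heart of the argument. Each graded piece $(\cF_i/\cF_{i-1})|_D$ is a nonzero $(d-1)$-dimensional sheaf, so its multiplicity $\mult((\cF_i/\cF_{i-1})|_D) = h^{d-1}\theta_{d-1}((\cF_i/\cF_{i-1})|_D)$ is at least $1$, because $\theta_{d-1}$ of a nonzero $(d-1)$-dimensional sheaf is an effective cycle with positive integral coefficients and $h$ is very ample. Summing over $i$, using additivity of $\theta_{d-1}$ in the filtration together with $\theta_{d-1}(E|_D) = h\,\theta_d(E)$, gives $\sum_i \mult((\cF_i/\cF_{i-1})|_D) = h^{d-1}\cdot h\cdot \theta_d(E) = \deg_{h^d}(E)$, hence $l \le \deg_{h^d}(E)$. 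Therefore $a' := a + \deg_{h^d}(E) - 1$ depends only on $a$ and $\deg_{h^d}(E)$ and satisfies $\mu_l \ge s - a'$, which is exactly the assertion that $E|_D$ is $(a',\alpha h,\alpha)$-semistable.

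The main obstacle is this last step: without the uniform lower bound $\mult((\cF_i/\cF_{i-1})|_D) \ge 1$ on each Harder-Narasimhan factor, which converts the fixed total multiplicity $\deg_{h^d}(E)$ into a bound on the number $l$ of factors, the spread $s - \mu_l$ could not be controlled independently of $E$. The remaining verifications, namely purity of $E|_D$ for general $D$ and the standard fact that quotient slopes are bounded below by $\mu_l$, are routine within the Harder-Narasimhan formalism of \cite{MegyPavelToma}.
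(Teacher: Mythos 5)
Your proposal is correct and follows essentially the same route as the paper's proof: both run the dichotomy of Theorem \ref{thm:GMSthmHD} down the Harder--Narasimhan filtration of $E|_D$, telescope alternative (2), anchor either with alternative (1) or with $\mu^\alpha_{\alpha h,\max}(E|_D)\ge \mu^\alpha_{\alpha h}(E|_D)$ when (1) never holds, and your explicit bound $l\le \deg_{h^d}(E)$ (each factor having integral multiplicity at least $1$) is precisely what the paper leaves implicit behind its constant $\max(l-1,\,l-1+a)$. The only blemish is your final constant $a'=a+\deg_{h^d}(E)-1$, which fails in the $i_0=0$ branch when $a<0$ (there you only obtain $\mu_l\ge s-(l-1)$, not $\mu_l\ge s-a'$); your own intermediate inequalities give $\mu_l\ge s-\max\bigl(l-1,\,a+l-1\bigr)$, so taking $a'=\max(a,0)+\deg_{h^d}(E)-1$, in line with the paper's $\max(l-1,\,l-1+a)$, repairs this immediately.
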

\begin{proof}
We choose a general divisor $D \in |\mathcal O_X(1)|$ such that $D$ avoids the associated points of $E$ and $E|_D$ remains pure. Also denote by 
\[
    0 = E_0 \subset E_1 \subset \ldots \subset E_l = E|_D
\]
the Harder-Narasimhan filtration of $E|_D$. Now consider a pure quotient $E|_D \to F$, and note that by construction $\mu^\alpha_{\alpha h}(F) \ge \mu^\alpha_{\alpha h,\min}(E|_D)$.
By Theorem \ref{thm:GMSthmHD}, for general $D \in |\mathcal O_X(1)|$ and $i = 1,\ldots,l-1$ we have
\begin{enumerate}
    \item $\mu^\alpha_{\alpha h}(E|_D/E_i) \ge \mu^\alpha_{\alpha h}(E|_D) - a$, or
    \item $\mu^\alpha_{\alpha h}(E_{i+1}/E_{i}) \ge \mu^\alpha_{\alpha h}(E_{i}/E_{i-1})  - 1$.
\end{enumerate}
Using the fact that $\mu^\alpha_{\alpha h}(E_{i+1}/E_{i}) \ge \mu^\alpha_{\alpha h}(E|_D/E_{i})$, we deduce that for each $i$
\begin{enumerate}
    \item[(1')] $\mu^\alpha_{\alpha h}(E_{i+1}/E_{i}) \ge \mu^\alpha_{\alpha h}(E|_D) - a$, or
    \item[(2)] $\mu^\alpha_{\alpha h}(E_{i+1}/E_{i}) \ge \mu^\alpha_{\alpha h}(E_{i}/E_{i-1})  - 1$.
\end{enumerate}
Applying $(1')$ and $(2)$ for $i$ starting with $i = l-1$, we find successively $\mu^\alpha_{\alpha h,\min}(E|_D) = \mu^\alpha_{\alpha h}(E_{l}/E_{l-1}) \ge \mu^\alpha_{\alpha h}(E|_D) - a$ and in this case we stop, or we find
$\mu^\alpha_{\alpha h}(E_{l}/E_{l-1}) \ge \mu^\alpha_{\alpha h}(E_{l-1}/E_{l-2}) -1$ and in this case we make $i = l-2$ and continue. Eventually we get
\[
    \mu^\alpha_{\alpha h}(F) \ge \mu^\alpha_{\alpha h,\min}(E|_D) \ge \mu^\alpha_{\alpha h}(E|_D) - \max(l-1,l-1+a).
\]
We have also used that $\mu^\alpha_{\alpha h}(E_{1}/E_{0}) = \mu^\alpha_{\alpha h,\max}(E|_D) \ge \mu^\alpha_{\alpha h}(E|_D)$.
\end{proof}

\section{Boundedness results in higher dimension}

{We are now ready to prove our main boundedness result for higher dimensional pure sheaves.}
{
\begin{thm}[Uniform boundedness of pure sheaves]\label{thm:boundednessHD}
 Let $d \ge 2$, let $K_1 \subset \Amp^1(X)$ and $K_d \subset \Amp^d(X)$ be compact sets, and let $K_{d-1} = K_1 h_1 \cdots h_{d-2} \subset \Amp^{d-1}(X)$ for some fixed $h_1,\ldots,h_{d-2} \in \Amp^1(X)$ rational ample divisor classes. Fix $a \in \R$, set $K = K_{d} \times K_{d-1}$, and let $\gE$ be the set of isomorphism classes of $d$-dimensional sheaves of fixed numerical type on $X$ which are $(a,K)$-semistable. Then $\gE$ is bounded.
\end{thm}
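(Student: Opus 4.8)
The plan is to reduce the $d$-dimensional boundedness problem to the already-established two-dimensional case (Theorem~\ref{thm:boundedness2Dim}/Corollary~\ref{cor:UniformBoundednessD=2}) by repeatedly restricting to general hyperplane sections, using the generalized Grauert--M\"ulich--Spindler result (Corollary~\ref{cor:restrictionOfType}) to control how semistability transforms under restriction. First I would use Remark~\ref{rem:compacta} together with Lemma~\ref{lem:changingDegs} to replace the compact $K_d \subset \Amp^d(X)$ by the single class $h^d$; thus it suffices to prove boundedness for $K = \{h^d\} \times K_{d-1}$ with $K_{d-1} = K_1 h_1 \cdots h_{d-2}$. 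Writing $\alpha \in K_1$, the hypothesis becomes $(a, h^d, \alpha h_1\cdots h_{d-2})$-semistability. The key geometric input is that restriction to a general member $D \in |\mathcal{O}_X(1)|$ reduces the dimension of the support by one, while Corollary~\ref{cor:restrictionOfType} guarantees that the restriction remains $(a', \cdot, \cdot)$-semistable for a controlled $a'$.

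\textbf{Inductive setup.} I would argue by descending induction, cutting $X$ by $d-2$ general hyperplanes $D_1, \ldots, D_{d-2}$ to land on a two-dimensional situation. At each step the support dimension drops by one, and the degree-type invariants $\theta_j$ of the restriction are determined by those of $E$ (via the short exact sequences $0 \to F(-1) \to F \to F|_D \to 0$ as in Lemma~\ref{lemma:destabilizingRestriction}), so the numerical type stays fixed within a finite set as $D$ varies generically. The crucial subtlety is that Corollary~\ref{cor:restrictionOfType} is stated for polarizations of the special ``complete intersection'' shape $\alpha h^2, \alpha h$; this is precisely why the hypothesis forces $K_{d-1}$ to have the factored form $K_1 h_1 \cdots h_{d-2}$. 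By absorbing successive factors $h_i$ into the role of $\alpha$ (treating $\alpha h_1 \cdots h_{j}$ as the new ``$\alpha$'' at stage $j$), each restriction step takes an $(a_j, \alpha h_1\cdots h_{d-2-j}\,h^2, \alpha h_1 \cdots h_{d-2-j}\,h)$-semistable sheaf on the partial intersection to an $(a_{j+1}, \cdot h, \cdot)$-semistable sheaf on the next hyperplane section, with $a_{j+1}$ depending only on $a_j$ and the (fixed) top degree. After $d-2$ such steps one obtains a family of two-dimensional $(a'', K_1 h_1\cdots h_{d-2}|_{D_1 \cap \cdots \cap D_{d-2}})$-semistable sheaves of fixed numerical type.

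\textbf{Descending from the restriction.} Applying Corollary~\ref{cor:UniformBoundednessD=2} (through Theorem~\ref{thm:boundedness2Dim}) to this two-dimensional family shows that the set of restrictions $\{E|_{D_1 \cap \cdots \cap D_{d-2}}\}$ is bounded. The final move is the standard ``boundedness descends through generic restriction'' principle: since a bounded family of sheaves has uniformly bounded Castelnuovo--Mumford regularity, and the regularity of $E$ on $X$ is controlled by that of its general $(d-2)$-fold hyperplane restriction together with the fixed numerical type, one concludes that $\gE$ itself is bounded (this is the analogue of the Grothendieck-type argument used via \cite[Lemme 2.5]{Grothendieck-theHilbertScheme} in the surface case).

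\textbf{Main obstacle.} The hardest part is making the inductive bookkeeping precise: one must verify that the auxiliary constants $a_j$ remain finite and depend only on the fixed data $(a, \gamma, K_1, h_1, \ldots, h_{d-2})$ across all $d-2$ restriction steps, and that the ``general divisor'' choices at each stage can be made compatibly so that a single general complete intersection $D_1 \cap \cdots \cap D_{d-2}$ simultaneously preserves purity, avoids associated points, and realizes the Harder--Narasimhan behaviour demanded by Theorem~\ref{thm:GMSthmHD} at \emph{every} intermediate stage. A secondary technical point is confirming that lifting boundedness from the hyperplane section back to $X$ respects the compactly-varying polarization $\alpha \in K_1$ uniformly; here one invokes that $K_1$ lies in the interior of a finitely-generated cone (as in the convexity Lemma~\ref{lem:convexity}) so that the regularity bound can be taken uniform over $\alpha$.
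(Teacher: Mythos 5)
Your overall plan --- iterated general divisor restrictions via Corollary \ref{cor:restrictionOfType} down to dimension two, the two-dimensional Theorem \ref{thm:boundedness2Dim} as the base, and then transporting boundedness back up to $X$ --- is the same circle of ideas as the paper's proof (which organizes it as an induction on $d$, one restriction per step). However, there is a genuine gap in how you set up the restrictions: you cut by divisors $D\in|\cO_X(1)|$, i.e.\ in the class $h$, keeping $h_1,\ldots,h_{d-2}$ as spectator factors. For Corollary \ref{cor:restrictionOfType} to apply to a restriction along $h$, the sheaf must be semistable with respect to a pair of the precise shape $(\gamma h^2,\gamma h)$ with $\gamma\in\Amp^{d-2}(X)$; in particular the $(d-1)$-class must be divisible by $h$. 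The hypothesis only provides semistability with respect to $(\beta,\alpha h_1\cdots h_{d-2})$, $\alpha\in K_1$, and $\alpha h_1\cdots h_{d-2}$ is not of the form $\gamma h$. Lemma \ref{lem:changingDegs} cannot repair this: it lets you replace the top class $\beta$ (at the cost of enlarging $a$) but keeps the $(d-1)$-class fixed --- and necessarily so, since changing the $(d-1)$-class is genuine wall-crossing, not controlled by any uniform $a'$. The same illegitimate swap recurs at every hand-off of your induction: your stage-$j$ pairs $(\alpha h_1\cdots h_{d-2-j}h^2,\ \alpha h_1\cdots h_{d-2-j}h)$ carry a factor $h$ in the lower class where only the factors $h_i$ inherited from $K_{d-1}$ (minus those already used for cutting) can legitimately appear. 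So, as written, the argument cannot even perform its first restriction.

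The repair is exactly the point you flagged as ``the crucial subtlety'' but then implemented incorrectly: the restrictions must be taken along (very ample multiples of) the classes $h_1,\ldots,h_{d-2}$ occurring in the factorization of $K_{d-1}$, not along $h$. Concretely, one uses Lemma \ref{lem:changingDegs} to replace $\beta$ by $\alpha h_1^2h_2\cdots h_{d-2}=(\alpha h_2\cdots h_{d-2})h_1^2$ and applies Corollary \ref{cor:restrictionOfType} with ``$h$''$\,=h_1$ and ``$\alpha$''$\,=\alpha h_2\cdots h_{d-2}$; the given lower class $\alpha h_1\cdots h_{d-2}=(\alpha h_2\cdots h_{d-2})h_1$ then has exactly the required shape, and after restricting, the new lower class $\alpha h_2\cdots h_{d-2}$ is divisible by $h_2$, so the process continues. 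This is the paper's proof, phrased as induction on $d$: the restricted sheaves are $(b,K')$-semistable with $K'=\{h_1^{d-1}\}\times(K_1h_2\cdots h_{d-2})$, which again has the product shape required by the inductive hypothesis. Note also that the paper closes each step differently from you: instead of a Castelnuovo--Mumford regularity/Kleiman argument, it observes that boundedness of the restricted family bounds $\mu^{h_1^{d-2}}_{h_1^{d-1},\max}$, hence by the slope-shift identity of Lemma \ref{lemma:destabilizingRestriction} bounds $\mu^{h_1^{d-1}}_{h_1^{d},\max}$ on $\gE$, and then invokes the standard criterion of Langer/Huybrechts--Lehn. Your regularity-based lifting could likely be made rigorous (it is essentially the proof of Kleiman's criterion and needs purity plus the fixed numerical type), but it is a secondary issue compared with the choice of restriction classes.
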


\begin{cor}\label{cor:higherDimension}
 Let $d \ge 2$ and let $h_1,\ldots,h_{d-2} \in \Amp^1(X)$ be rational ample divisor classes.  Suppose also $K_1 \subset \Amp^1(X)$ and $K_d \subset \Amp^d(X)$ are compact sets, and define 
 $$K_{d-1} = K_1 h_1 \cdots h_{d-2} \subset \Amp^{d-1}(X).$$

 Then the set of isomorphism classes of $d$-dimensional coherent sheaves of fixed numerical type and  slope-semistable with respect to some $(\beta,\alpha)\in K_{d}\times K_{d-1}$ is bounded.
\end{cor}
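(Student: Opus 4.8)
The plan is to prove Theorem \ref{thm:boundednessHD} by induction on the dimension $d$, from which Corollary \ref{cor:higherDimension} follows immediately as the special case $a=0$: a sheaf that is slope-semistable with respect to some $(\beta,\alpha)\in K_d\times K_{d-1}$ is exactly a $(0,K)$-semistable sheaf. The base case $d=2$ is precisely Corollary \ref{cor:UniformBoundednessD=2}, where $K_{d-1}=K_1$ is an arbitrary compact subset of $\Amp^1(X)$ (the product $h_1\cdots h_{d-2}$ being empty). For the inductive step I would assume the statement in dimension $d-1$ on every smooth projective variety, applying it in particular to a general hyperplane section of $X$.

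The first task is to bring an $(a,K)$-semistable sheaf into the precise form required by the Grauert--M\"ulich--Spindler theorem of Section \ref{section:GrauertMuelich}, namely $(a',\alpha h^2,\alpha h)$-semistability with $\alpha\in\Amp^{d-2}(X)$. Since the $h_i$ are rational ample, I would choose the very ample polarisation so that $h=[\cO_X(1)]$ is a positive multiple of $h_{d-2}$ (rescaling alters slopes and $a$ only by positive constants). If $E\in\gE$ is $(a,K)$-semistable witnessed by $(\beta,\alpha_{d-1})\in K$, then $\alpha_{d-1}=\alpha_1 h_1\cdots h_{d-2}$ with $\alpha_1\in K_1$, and after this choice $\alpha_{d-1}=\alpha h$ where $\alpha:=\alpha_1 h_1\cdots h_{d-3}$ ranges over the compactum $\mathcal K:=K_1 h_1\cdots h_{d-3}\subset\Amp^{d-2}(X)$. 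The proof of Lemma \ref{lem:changingDegs} keeps the witnessing $(d-1)$-class $\alpha h$ fixed while allowing the $d$-class to be replaced by any element of a prescribed compactum; applying it with $K_d':=\mathcal K h^2$ I would replace $\beta$ by $\alpha h^2$ and conclude that $E$ is $(a',\alpha h^2,\alpha h)$-semistable for a uniform $a'$. Corollary \ref{cor:restrictionOfType} then applies: for general $D\in|\cO_X(1)|$ the restriction $E|_D$ is an $(a'',\alpha h,\alpha)$-semistable purely $(d-1)$-dimensional sheaf on the smooth variety $D$, with $a''$ depending only on $a'$ and the fixed degree $\deg_{h^d}(E)$.

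Next I would verify that $E|_D$ has exactly the shape to which the inductive hypothesis applies: its degree class is $\alpha|_D=(\alpha_1|_D)(h_1|_D)\cdots(h_{d-3}|_D)$, which is of the complete-intersection form $K_1^D h_1^D\cdots h_{d-3}^D$ for dimension $d-1$ on $D$ with $K_1^D=K_1|_D\subset\Amp^1(D)$ compact and $h_i^D=h_i|_D$ rational ample, while its top class $\alpha h|_D$ lies in the compactum $\mathcal K h|_D\subset\Amp^{d-1}(D)$; the numerical type of $E|_D$ is fixed since it is determined by that of $E$ and by $[D]=h$. By the inductive hypothesis the family $\{E|_D\}$ is bounded, uniformly in $D$ and in $E$ (all numerical data and compacta vary algebraically over the projective parameter space $|\cO_X(1)|$), so there is a constant $C'$ with $\mu_{\max}(E|_D)\le C'$ for the $h|_D$-slope $\mu^{(h|_D)^{d-2}}_{(h|_D)^{d-1}}$. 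To descend to $X$, let $\mu_{\max}(E)$ be the maximal $\mu$-slope among the Harder--Narasimhan factors of $E$ and let $G\subseteq E$ realise it. For general $D$ the inclusion restricts to $G|_D\subseteq E|_D$ with $G|_D$ pure of dimension $d-1$, and Lemma \ref{lemma:destabilizingRestriction} (with $\alpha=h^{d-2}$) gives $\mu^{(h|_D)^{d-2}}_{(h|_D)^{d-1}}(G|_D)=\mu(G)-\tfrac12$. Since the total slope of a sheaf never exceeds its maximal Harder--Narasimhan slope, and maximal slope does not increase on subsheaves,
\[
 \mu(G)-\tfrac12=\mu^{(h|_D)^{d-2}}_{(h|_D)^{d-1}}(G|_D)\le \mu_{\max}(G|_D)\le \mu_{\max}(E|_D)\le C'.
\]
Thus $\mu_{\max}(E)\le C'+\tfrac12$ uniformly over $\gE$, and Kleiman's boundedness criterion (a family of purely $d$-dimensional sheaves of fixed numerical type with uniformly bounded $\mu_{\max}$ is bounded, cf.\ \cite{HL}) yields boundedness of $\gE$.

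I expect the main obstacle to be this final descent: relating the $h$-slope of the maximal destabiliser of $E$ to the bounded restrictions in a manner that is genuinely uniform as $D$ varies in $|\cO_X(1)|$ (which is why one must either work relatively over the incidence variety $Z\to\Pi$ or argue that all invariants vary algebraically over the compact base), and invoking a boundedness criterion valid for pure rather than merely torsion-free sheaves. The matching step through Lemma \ref{lem:changingDegs} is delicate but formal; the feature that makes the induction close is that the restriction theorem of Section \ref{section:GrauertMuelich} returns a stability condition of exactly the complete-intersection shape $K_1^D h_1^D\cdots h_{d-3}^D$ demanded by the inductive hypothesis, which is precisely why the hypothesis is framed in terms of the auxiliary classes $h_1,\dots,h_{d-2}$.
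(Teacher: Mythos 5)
Your overall architecture (induction on $d$, Lemma \ref{lem:changingDegs} to reach the $(a',\alpha h^2,\alpha h)$ shape, Corollary \ref{cor:restrictionOfType} to restrict to a general divisor, a slope-descent via Lemma \ref{lemma:destabilizingRestriction}, and Langer's boundedness criterion to conclude) is the same as in the paper's proof of Theorem \ref{thm:boundednessHD}, and your reduction of Corollary \ref{cor:higherDimension} to the case $a=0$ is correct. But you diverge at the decisive point: you regard $E|_D$ as a sheaf \emph{on the hyperplane section $D$} and invoke the inductive hypothesis ``on every smooth projective variety'', in particular on $D$. The paper instead keeps $E|_{D_1}$ as a $(d-1)$-dimensional pure sheaf \emph{on $X$ itself}: its class in $K(X)_{\num}$ is $[E]-[E(-H_1)]$, hence fixed, and it is $(b,K')$-semistable with $K'=\{h_1^{d-1}\}\times(K_1h_2\cdots h_{d-2})$, which is exactly the shape of Theorem \ref{thm:boundednessHD} in dimension $d-1$ on the same $X$. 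This is not a cosmetic difference, and it is where your proposal has a genuine gap.

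Two problems arise from working on $D$. First, uniformity in $D$: the inductive hypothesis applied on a fixed variety $D_0$ produces a constant $C'(D_0)$ bounding $\mu_{\max}$ of the relevant sheaves on $D_0$, but nothing in the statement being inducted makes $C'(D)$ uniform as $D$ moves in $|\cO_X(1)|$. Nor can you fix a single $D_0$: ``general $D$'' depends on $E$ (it must avoid the associated points of $E$ and satisfy the conclusion of Theorem \ref{thm:GMSthmHD} for $E$), and since $\gE$ is precisely the family not yet known to be bounded, no single divisor is general for all $E\in\gE$ simultaneously. Closing this would require a stronger, relative form of the induction hypothesis, uniform over the family of smooth members of $|\cO_X(1)|$ --- a different theorem, which you gesture at (``all invariants vary algebraically'') but do not prove. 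Second, ``fixed numerical type'' on $D$ is unjustified: the class of $E|_D$ in $K(D)_{\num}$ is \emph{not} determined by the class of $E$ in $K(X)_{\num}$ together with $[D]=h$; only the pushforward class $[E]-[E(-1)]\in K(X)_{\num}$ is. The groups $N^*(D)$ themselves vary with $D$ (e.g.\ by Noether--Lefschetz jumping), and even on a fixed $D$ two sheaves with equal pushforward classes can have distinct numerical classes on $D$. Both difficulties evaporate in the paper's formulation precisely because the induction never leaves $X$ --- this is the reason the theorem is stated for pure $d$-dimensional sheaves on a fixed ambient manifold rather than for sheaves on a variety of dimension $d$. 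Your argument becomes correct if, after one more application of Lemma \ref{lem:changingDegs}, you apply the inductive hypothesis on $X$ to the sheaves $E|_D$ viewed as $(d-1)$-dimensional sheaves on $X$, semistable with respect to $\{h^{d-1}\}\times(K_1h_1\cdots h_{d-3})$ in your normalization; your slope-descent step then goes through verbatim.
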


\begin{proof}[Proof of Theorem \ref{thm:boundednessHD}]
We assume that all sheaves in $\gE$ have fixed classes $\theta_j$, $0 \le j \le d$. We shall argue by induction on $d$. If $d = 2$, then for any $[E] \in \gE$, we have that $E$ is  $(a,\beta,\alpha)$-semistable for some $\beta \in K_2$ and some $\alpha \in K_1$. We can therefore apply Theorem \ref{thm:boundedness2Dim} and obtain that $\gE$ is bounded when $d = 2$.  

Now let $d > 2$. For the induction step, we may assume that the class $h_1$ corresponds to a very ample divisor $H_1$ on $X$, by replacing $h_1$ by some multiple $m h_1$ if necessary. Let $[E] \in \gE$. Then $E$ is  $(a,\beta,\alpha h_1 \cdots h_{d-2})$-semistable for some $\beta \in K_d$ and some $\alpha \in K_1$. According to Lemma \ref{lem:changingDegs} there is some $a' \in \R$, depending on $K, \theta_d, \theta_{d-1}$ and $a$, such that $E$ is  $(a',\alpha h_1^2 h_2 \cdots   h_{d-2},\alpha h_1 \cdots h_{d-2})$-semistable.

By Corollary \ref{cor:restrictionOfType} we obtain that $E|_{D_1}$ is  $(a'',\alpha h_1 \cdots h_{d-2},\alpha h_2\cdots h_{d-2})$-semistable for some general $D_1 \in |H_1|$ and $a''$ depending only on $K, \theta_d, \theta_{d-1}$ and $a$. 
Using again Lemma \ref{lem:changingDegs}, we can find $b \in \R$ (depending on $K, \theta_d, \theta_{d-1}$ and $a$) such that $E|_{D_1}$ is  $(b,h_1^{d-1},\alpha h_2\cdots h_{d-2})$-semistable. Also note that the numerical classes of the restricted sheaves $E|_{D_1}$, with $[E] \in \gE$, depend only on the $\theta_j$, $1 \le j \le d$, and $h_1$. 
Therefore all restricted sheaves $E|_{D_1}$, with $[E] \in \gE$, fit inside a suitable family $\Sigma$ of $(d-1)$-dimensional pure sheaves of fixed numerical type on $X$ which are  $(b,K')$-semistable, where $K' = \{ h_1^{d-1}\}\times  (K_1 h_2\cdots h_{d-2})$. By induction we have that $\Sigma$ is bounded, and so $\mu^{h_1^{d-2}}_{h_1^{d-1},\max}$ is bounded on $\Sigma$. As
\[
    \mu^{h_1^{d-2}}_{h_1^{d-1}}(E|_{D_1}) = \mu^{h_1^{d-1}}_{h_1^{d}}(E) - \frac{1}{2}
\]
for $[E] \in \gE$ and general $D_1 \in |H_1|$, we further get that $\mu^{h_1^{d-1}}_{h_1^{d},\max}$ is upper bounded on $\gE$. It follows that $\gE$ is also bounded, cf. \cite{Langer,HL}.
\end{proof}
}

\begin{rem}
In the proof of Theorem \ref{thm:boundednessHD} our Theorem \ref{thm:GMSthmHD} of Grauert-M\"ullich type and its Corollary \ref{cor:restrictionOfType} are used for the  induction step. Their statements impose that we start the induction at $d=2$, hence the logical need of a separate proof of a boundedness result in the two-dimensional case.
\end{rem}

\begin{cor}\label{cor:boundednessSegments}
Let $\gamma$ be a numerical class of $d$-dimensional coherent sheaves on $X$, let $\alpha, \alpha' \in \Amp^1(X)$ and let $h_1,\ldots,h_{d-2},h_1',\ldots,h'_{d-2} \in \Amp^1(X)$ be rational ample classes. Then there is a connected compact subset $K_{d-1} \subset \Amp^{d-1}(X)$ containing $\alpha h_1\cdots h_{d-2}$ and $\alpha' h'_1\cdots h'_{d-2}$ such that for any non-empty compact set $K_d \subset \Amp^{d}(X)$, the set $K_{d} \times K_{d-1} $ is a bounded stability parameter set for $\gamma$. 
\end{cor}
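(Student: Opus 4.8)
The plan is to realise $K_{d-1}$ as a \emph{finite union of complete-intersection segments}, each of the precise shape required by Theorem \ref{thm:boundednessHD}, arranged into a connected chain that joins $\alpha h_1\cdots h_{d-2}$ to $\alpha' h_1'\cdots h_{d-2}'$. The substantive work (the Grauert--M\"ulich--Spindler argument) is already packaged inside Theorem \ref{thm:boundednessHD}, so my task reduces to bookkeeping: I must interpolate between the two prescribed classes one factor at a time, always keeping $d-2$ of the factors frozen at \emph{rational} ample $1$-classes while letting the remaining factor sweep out a line segment in $\Amp^1(X)$. The point that makes this legitimate is that Theorem \ref{thm:boundednessHD} allows the varying compact set $K_1\subset\Amp^1(X)$ to be arbitrary (in particular to contain the possibly irrational endpoints $\alpha,\alpha'$), and only insists that the $h_1,\ldots,h_{d-2}$ be rational.

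Concretely, write $h$ for the fixed very ample class and use that $\Amp^1(X)$ is a convex cone, so the line segment $[\beta_0,\beta_1]\subset\Amp^1(X)$ between any two of its points is a compact subset of $\Amp^1(X)$. I would set
\begin{align*}
S_0 &= [\alpha,h]\cdot h_1\cdots h_{d-2},\\
S_j &= [h_j,h_j']\cdot h\, h_1'\cdots h_{j-1}'\, h_{j+1}\cdots h_{d-2}\qquad (1\le j\le d-2),\\
S_{d-1} &= [h,\alpha']\cdot h_1'\cdots h_{d-2}',
\end{align*}
so that $S_0$ turns $\alpha$ into $h$, then $S_1,\ldots,S_{d-2}$ turn each $h_j$ into $h_j'$ one at a time, and finally $S_{d-1}$ turns $h$ into $\alpha'$ (for $d=2$ the middle segments are absent and the chain is just $[\alpha,h]\cup[h,\alpha']\subset\Amp^1(X)$). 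Every $S_j$ has exactly $d-2$ frozen factors, all rational ample, together with a one-parameter compact family in $\Amp^1(X)$, hence is of the form to which Theorem \ref{thm:boundednessHD} applies. Consecutive segments share an endpoint (e.g.\ both $S_0$ and $S_1$ pass through $h\,h_1\cdots h_{d-2}$, and $S_{d-2}$, $S_{d-1}$ through $h\,h_1'\cdots h_{d-2}'$), so $K_{d-1}:=\bigcup_{j=0}^{d-1}S_j$ is a connected compact subset of $\Amp^{d-1}(X)$ containing both prescribed points, and it depends only on $\alpha,\alpha',h,h_i,h_i'$, not on $K_d$.

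To conclude, fix any nonempty compact $K_d\subset\Amp^d(X)$ and any $a\in\R$. Since $K_{d-1}$ is a finite union, $K_d\times K_{d-1}=\bigcup_j\,(K_d\times S_j)$, and by Definition \ref{def:stability} a sheaf is $(a,K_d\times K_{d-1})$-semistable exactly when its defining witness $(\beta,\alpha)$ lies in one of the pieces, i.e.\ precisely when it is $(a,K_d\times S_j)$-semistable for some $j$. Thus the set of $(a,K_d\times K_{d-1})$-semistable sheaves of class $\gamma$ is the union over the finitely many $j$ of the sets of $(a,K_d\times S_j)$-semistable sheaves of class $\gamma$; each of these is bounded by Theorem \ref{thm:boundednessHD} (with $K_1$ the interpolating segment and the frozen rational classes playing the role of $h_1,\ldots,h_{d-2}$), and a finite union of bounded families is bounded. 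As this holds for every $a$, the set $K_d\times K_{d-1}$ is a bounded stability parameter set for $\gamma$. I do not expect a genuine obstacle here; the only points demanding care are verifying that each $S_j$ really matches the rigid complete-intersection form of Theorem \ref{thm:boundednessHD} (only rational factors frozen), and the clean observation that semistability against a union of parameter sets decomposes as the union of the individual semistability loci, which is what allows a single $a$ to suffice on each piece.
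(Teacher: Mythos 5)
Your proposal is correct and follows essentially the same route as the paper: the paper likewise fixes a rational auxiliary class $h$, forms the chain of segments $\ell h_1\cdots h_{d-2}$, $\ell_j$ times the frozen rational factors (switching one $h_j$ to $h_j'$ at a time), and $\ell' h_1'\cdots h_{d-2}'$, each of the exact complete-intersection shape covered by Theorem \ref{thm:boundednessHD}, and takes $K_{d-1}$ to be their union. Your explicit remark that $(a,K_d\times K_{d-1})$-semistability decomposes as the union of the $(a,K_d\times S_j)$-semistability loci is left implicit in the paper but is exactly the observation that makes the final step work.
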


\begin{proof}

Fix another rational $h\in \Amp^{1}(X)$ and denote by $\ell \subset \Amp^1(X)$ the segment between $\alpha$ and $h$.  Then the set $\kappa: = \ell h_1\cdots h_{d-2}\subset \Amp^{d-1}(X)$ joins $\alpha h_1\cdots h_{d-2}$ and $h h_1\cdots h_{d-2}$.     And for any non-empty compact set $K_d \subset \Amp^{d}(X)$, the set $ K_d\times\kappa$ is a bounded stability parameter set for $\gamma$ by Theorem \ref{thm:boundednessHD}.

Similarly using the line $\ell'$ joining $\alpha'$ and $h$ and the lines $\ell_i$ joining $h_i$ and $h_i'$ in $\Amp^1(X)$ we construct sets $\kappa'$ and $\kappa_i$ for $i=1,\ldots,d-2$.   Now set $K_{d-1} = \kappa\cup \kappa'\cup \kappa_1\cup \cdots \cup \kappa_{d-2}$ and we are done (see Figure \ref{fig:linesofboundedstabilityparameters} for an illustration in the case $d=4$).
\end{proof}

\begin{rem}
    In the special case when $d=n$ the results of this section remain valid also over a ground field of arbitrary characteristic.
    In this case their proof is based on Langer's boundedness result from \cite{Langer} and the technique used in \cite{GRT1} to prove uniform boundedness. Indeed, to get a proof of Corollary \ref{cor:higherDimension} for instance for $d=n$ in arbitrary characteristic it suffices to use in \cite[Theorem 6.8]{GRT1} a slightly modified definition of the set $C^+(X)$ considered therein, by putting 
    $$K^+_{h_1\ldots h_{n-2}}:=\{\beta\in N^1(X)_\R \ | \ \beta^2h_1\ldots h_{n-2}>0, \ \beta h h_1\ldots h_{n-2}>0\}, $$
    $$ C^+_{h_1\ldots h_{n-2}}:= h_1\ldots h_{n-2} K^+_{h_1\ldots h_{n-2}}, $$
    $$ C^+(X):=\bigcup_{(h_1,\ldots,h_{n-2})\in (\Amp^1(X))^{n-2}}C^+_{h_1\ldots h_{n-2}}. $$
    \end{rem}

\section{Applications}\label{sec:corollaries}

As announced in the introduction, the main application of our boundedness results are to wall-crossing with respect to variation of stability notions. With the exception of Proposition \ref{prop:MPT} and Corollary \ref{cor:wallsD=2}, which are valid in any characteristic, for the results of this section we need to assume that our base field is of characteristic zero. We briefly recall the setup proposed in \cite{MegyPavelToma}.

Fix $d$ an integer with $2\le d\le n$ and let $E$ be a $d$-dimensional coherent sheaf on $X$.
For a fixed element $\alpha = (\alpha_d,\ldots,\alpha_0)\in \Amp^d(X)\times\ldots\times\Amp^0(X)$ the {\em $\alpha$-Hilbert polynomial} and the  \textit{reduced $\alpha$-Hilbert polynomial} of $E$ are defined by $$
	P_\alpha(E,m) = \sum_{i=r}^d \deg_{\alpha_i}(E)\frac{m^i}{i!} \ 
 \text{and  by} \ 
  p_\alpha(E) = \frac{P_\alpha(E)}{\deg_{\alpha_d}(E)}
$$
respectively. The sheaf $E$ is then said to be {\em $\alpha$-semistable} if it is pure  and
	for all pure $d$-dimensional quotients $E\to F$ one has 
	\[
		 p_\alpha(F,m) \geq p_\alpha(E,m) \text{ for }m \gg 0.
	\]
When $\alpha=(h^d,\ldots,h^0)$ the usual notion of semistabilty in the sense of Gieseker-Maruyama-Simpson is recovered by the above definition. Clearly  an $\alpha$-semi-stable sheaf in the above sense is necessarily $(\alpha_d,\alpha_{d-1})$-slope-semistable  in the sense of Definition \ref{def:stability}. 

Fix now $\gamma$ a numerical class of $d$-dimensional coherent sheaves and let $\alpha = (\alpha_d,\ldots,\alpha_0)\in \Amp^d(X)\times\ldots\times\Amp^0(X)$ be as above. Then the substack $\cC oh_X^{\gamma, \alpha ss}$ of $\alpha$-semistable coherent sheaves of numerical type $\gamma$ on $X$ is open in the algebraic stack $\cC oh_X$ of coherent sheaves on $X$, 
\cite[Proposition 4.1]{MegyPavelToma}. Under a suitable boundedness assumption and making use of \cite[Theorem 7.2.7]{AlperHLH} it is further proved in \cite[Theorem 5.1]{MegyPavelToma} that $\cC oh_X^{\gamma, \alpha ss}$ admits a proper good moduli space $M_X^{\gamma, \alpha ss}$ in the sense of \cite{Alper13}. Combining this with our Corollary \ref{cor:higherDimension} we get
\begin{cor}
    If $\gamma$ is a numerical class of $d$-dimensional coherent sheaves on $X$ and $\alpha\in \Amp^d(X)\times\ldots\times\Amp^0(X)$ is such that $\alpha_{d-1}$ is a complete intersection class $\alpha_{d-1}=h_1\cdots h_{d-1}$ with $h_1\in \Amp^1(X)$ real arbitrary and $h_2,\ldots,h_{d-1}\in \Amp^1(X)$ rational, then the stack $\cC oh_X^{\gamma, \alpha ss}$ of $\alpha$-semistable coherent sheaves of numerical type $\gamma$ on $X$ admits a proper good moduli space. 
\end{cor}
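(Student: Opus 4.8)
The plan is to reduce the statement to the boundedness already established in Corollary \ref{cor:higherDimension} and then to invoke the existence theorem for good moduli spaces recalled immediately above. Recall from \cite[Theorem 5.1]{MegyPavelToma} that, under a suitable boundedness assumption, the open substack $\cC oh_X^{\gamma, \alpha ss}$ admits a proper good moduli space in the sense of \cite{Alper13}; the relevant assumption is precisely that the family of $\alpha$-semistable sheaves of numerical type $\gamma$ be bounded. Thus the entire task reduces to verifying this one boundedness statement, after which the conclusion is immediate.

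To verify it, I would first use the implication noted just before the statement: every $\alpha$-semistable sheaf is automatically $(\alpha_d,\alpha_{d-1})$-slope-semistable in the sense of Definition \ref{def:stability}. Hence the $\alpha$-semistable locus of type $\gamma$ sits inside the a priori larger family of $d$-dimensional sheaves of type $\gamma$ that are slope-semistable with respect to the single pair $(\alpha_d,\alpha_{d-1})$, and it suffices to bound this larger family. At this point I would apply Corollary \ref{cor:higherDimension}, matching the complete-intersection hypothesis $\alpha_{d-1}=h_1\cdots h_{d-1}$ to the notation there: take the $d-2$ rational classes $h_2,\ldots,h_{d-1}$ as the fixed rational divisor classes, take $K_1=\{h_1\}$ (the arbitrary real class regarded as a one-point compact set), and take $K_d=\{\alpha_d\}$. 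Then $K_{d-1}=K_1 h_2\cdots h_{d-1}=\{\alpha_{d-1}\}$, and Corollary \ref{cor:higherDimension} yields that the set of slope-semistable sheaves of type $\gamma$ with respect to $(\alpha_d,\alpha_{d-1})$ is bounded. Consequently the $\alpha$-semistable locus is bounded, the hypothesis of \cite[Theorem 5.1]{MegyPavelToma} is met, and the proper good moduli space exists.

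The only point requiring genuine care is the matching of hypotheses in this reduction, rather than any new analytic input. Specifically, one must confirm that the single class $h_1$ may be taken \emph{real} and arbitrary: this is admissible because Corollary \ref{cor:higherDimension} only demands that $K_1$ be compact, not rational, whereas the remaining $d-2$ classes $h_2,\ldots,h_{d-1}$ must indeed be rational in order to feed the Grauert--M\"ulich--Spindler argument underlying that corollary (which is also why the characteristic-zero hypothesis of this section is needed). One should also check that the boundedness condition invoked by \cite[Theorem 5.1]{MegyPavelToma} is exactly boundedness of the $\alpha$-semistable locus, so that the slope-semistable boundedness just proved is more than enough. Beyond these bookkeeping checks, no further difficulty arises, since the substantive content is entirely contained in Corollary \ref{cor:higherDimension}.
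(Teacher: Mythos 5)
Your proposal is correct and follows essentially the same route as the paper: the authors likewise note that $\alpha$-semistability implies $(\alpha_d,\alpha_{d-1})$-slope-semistability, apply Corollary \ref{cor:higherDimension} with the singleton sets $K_1=\{h_1\}$, $K_d=\{\alpha_d\}$ and the rational classes $h_2,\ldots,h_{d-1}$ as the fixed complete-intersection factors, and then invoke the existence theorem of \cite[Theorem 5.1]{MegyPavelToma}. Your bookkeeping about which class may be real (only the compact factor $K_1$) and which must be rational matches the paper's hypotheses exactly.
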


Again under suitable boundedness assumptions, the following existence result for locally finite rational chamber structures for the variation of semistability was proved in \cite{MegyPavelToma}. 

\begin{prop}[\cite{MegyPavelToma}]\label{prop:MPT}
    Let $d$ and $\gamma$ be as above and let $K\subset\Amp^d(X)\times\ldots\times\Amp^0(X)$ be a compact set whose projection on $N^d(X)\times N^{d-1}(X)$ is a bounded stability parameter set for $\gamma$.
    Then there is a finite set $W_{\gamma,K}$ of  hypersurfaces in $N^d(X)_\R\times\ldots\times N^0(X)_\R$ given by rational bilinear forms accounting for the variation of $\alpha$-semistability for coherent sheaves in the numerical class $\gamma$  for $\alpha\in K$ in the following sense. If $E$ is a coherent sheaf in $\gamma$ and $\alpha', \alpha''\in K$ lie in the same connected component of the complement of the hypersurfaces from $W_{\gamma,K}$  in $N^d(X)_\R\times\ldots\times N^0(X)_\R$, then $E$ is $\alpha'$-semistable if and only if it is $\alpha''$-semistable.
\end{prop}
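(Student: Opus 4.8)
The plan is to show that the variation of $\alpha$-semistability within the class $\gamma$ is governed by finitely many numerical walls, each the zero locus of a rational bilinear form obtained by comparing, degree by degree, the reduced Hilbert polynomial of $\gamma$ with that of a potential quotient class. First I would dispose of the sheaves that can never be semistable: if $E$ has class $\gamma$ and is $\alpha$-unstable for every $\alpha\in K$, then for $\alpha',\alpha''\in K$ both sides of the asserted biconditional are false, so such $E$ may be ignored. Since every $\alpha$-semistable sheaf is $(\alpha_d,\alpha_{d-1})$-slope-semistable and the projection of $K$ to $N^d(X)\times N^{d-1}(X)$ is by hypothesis a bounded stability parameter set for $\gamma$, the family $\mathcal B$ of sheaves of class $\gamma$ that are $\alpha$-semistable for at least one $\alpha\in K$ is bounded, and it therefore suffices to control the members of $\mathcal B$.

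For two $d$-dimensional numerical classes $\gamma,\gamma'$ and each $0\le i\le d-1$ I introduce the form
\[
 \Phi^{\gamma'}_i(\alpha):=\deg_{\alpha_i}(\gamma')\deg_{\alpha_d}(\gamma)-\deg_{\alpha_i}(\gamma)\deg_{\alpha_d}(\gamma'),
\]
which depends only on the coordinates $(\alpha_i,\alpha_d)$, is bilinear in them, and has rational coefficients because $\gamma,\gamma'$ are integral and the classes $\Todd_j(X)$ are rational. Its vanishing locus is exactly where the coefficients of $m^i$ in the reduced Hilbert polynomials $p_\alpha(\gamma')$ and $p_\alpha(\gamma)$ coincide. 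Since the comparison $p_\alpha(F,m)\ge p_\alpha(E,m)$ for $m\gg0$ is the lexicographic comparison of these coefficients from $i=d-1$ downwards, it can only change outcome when $\alpha$ crosses one of the hypersurfaces $\{\Phi^{\gamma'}_i=0\}$.

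The heart of the proof is a finiteness statement. I would let $\Gamma$ be the set of numerical types of pure $d$-dimensional quotients $E\to F$, with $[E]\in\mathcal B$, such that $p_\alpha(F)\le p_\alpha(E)$ (in the above sense) for some $\alpha\in K$; these are exactly the quotients that destabilize, or sit on a wall for, some member of $\mathcal B$ at some point of $K$. For any such $F$ the leading-coefficient comparison gives $\mu^{\alpha_{d-1}}_{\alpha_d}(F)\le\mu^{\alpha_{d-1}}_{\alpha_d}(E)$ at the corresponding $\alpha$; as $\mathcal B$ is bounded and $K$ compact, the right-hand side is bounded above, while $\deg_{\alpha_d}(F)\le\deg_{\alpha_d}(E)$ is bounded. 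Hence the kernels $\ker(E\to F)$ have $\alpha_{d-1}$-degree bounded below, so by Grothendieck's boundedness of subsheaves of bounded slope in a bounded family (cf.\ \cite[Lemme 2.5]{Grothendieck-theHilbertScheme} and \cite[Lemma 1.7.9]{HL}) they form a bounded family; therefore so do the quotients $F$, and only finitely many numerical types occur. This produces the finite collection
\[
 W_{\gamma,K}:=\bigl\{\,\{\Phi^{\gamma'}_i=0\}\ :\ \gamma'\in\Gamma,\ 0\le i\le d-1\,\bigr\}.
\]

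It remains to conclude. Let $\alpha',\alpha''\in K$ lie in one connected component of the complement of $\bigcup W_{\gamma,K}$ in $N^d(X)_\R\times\cdots\times N^0(X)_\R$, and fix $E$ of class $\gamma$. If $[E]\notin\mathcal B$ the biconditional holds trivially. Otherwise, take any pure quotient $E\to F$ of class $\gamma'$: if $\gamma'\notin\Gamma$ then $p_\alpha(F)>p_\alpha(E)$ strictly for all $\alpha\in K$, so this quotient respects semistability at both $\alpha'$ and $\alpha''$; if $\gamma'\in\Gamma$ then along any path in the component joining $\alpha'$ to $\alpha''$ none of the forms $\Phi^{\gamma'}_i$ changes sign, so the lexicographic comparison of $p_\alpha(F)$ with $p_\alpha(E)$ gives the same answer at $\alpha'$ and $\alpha''$. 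As this holds for every quotient, $E$ is $\alpha'$-semistable if and only if it is $\alpha''$-semistable. The main obstacle is the finiteness of $\Gamma$: this is precisely where the boundedness hypothesis on the projection of $K$ is indispensable, and one must take care to apply Grothendieck's lemma uniformly over the compact family of polarizations in $K$ (after subdividing $K$ and invoking a convexity argument as in Lemma \ref{lem:convexity}) rather than with respect to a single ample class.
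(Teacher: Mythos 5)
You should first be aware that the paper itself contains no proof of this proposition: it is imported verbatim from \cite{MegyPavelToma}, so your argument can only be judged on its own merits against the statement and the paper's definitions. Your architecture is the natural one: discard sheaves that are nowhere semistable on $K$, reduce to the bounded family $\mathcal B$ (boundedness follows from the hypothesis with $a=0$, since $\alpha$-semistability implies $(\alpha_d,\alpha_{d-1})$-slope-semistability), extract a finite set $\Gamma$ of numerical types of potentially destabilizing pure quotients, define walls by the rational bilinear forms $\Phi^{\gamma'}_i$, and conclude by constancy of the sign vector of $(\Phi^{\gamma'}_{d-1},\ldots,\Phi^{\gamma'}_0)$ on connected components, which determines the lexicographic comparison of reduced Hilbert polynomials. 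That last step is correct once finiteness of $\Gamma$ is granted (with the minor caveat that identically vanishing forms, e.g.\ for $\gamma'=\gamma$, must be excluded from $W_{\gamma,K}$, otherwise the complement of the walls is empty).

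The genuine gap is in the finiteness of $\Gamma$ when $d\ge 3$. Your argument gives an upper bound on $\deg_{\alpha_{d-1}}(F)$ for \emph{some} $\alpha_{d-1}$ in the compact projection of $K$, and you then invoke Grothendieck's lemma ``uniformly, after subdividing and using Lemma \ref{lem:convexity}''. But Lemma \ref{lem:convexity} requires lower bounds $\deg_{e_j}(F)\ge c_j$ for a \emph{basis} $e_1,\ldots,e_m$ of $N^{d-1}(X)_\R$ consisting of fixed positive classes whose cone contains the compact set. Grothendieck's lemma (and elementary Harder--Narasimhan theory) supplies such lower bounds on degrees of pure quotients of a bounded family only for complete-intersection classes, i.e.\ products of ample divisor classes, and these do \emph{not} span $N^{d-1}(X)_\R$ in general: on a smooth quadric fourfold, $N^2(X)_\R$ has rank two while products of divisor classes span only $\R h^2$, so for $d=3$ no admissible basis exists. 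Degree bounds for pure sheaves in non-complete-intersection directions are precisely the difficulty the paper identifies as open (it is why Theorem \ref{thm:boundednessHD} restricts $K_{d-1}$ to the form $K_1h_1\cdots h_{d-2}$), so this step cannot be waved through; for $d=2$ your argument is fine and is essentially the technique of Theorem \ref{thm:boundedness2Dim}, Claims 2--3. A telltale symptom of the gap is that you never use the hypothesis in its full strength: Definition \ref{def:BoundedStabilitySet} demands boundedness of the $(a,K')$-semistable sheaves of class $\gamma$ for \emph{every} $a\in\R$, whereas your proof uses only $a=0$. That stronger hypothesis is exactly the kind of input needed to bound near-destabilizing data without proving new degree estimates (for instance, graded objects assembled from a destabilizing quotient and the Harder--Narasimhan factors of its kernel are again of class $\gamma$ and are $(a,K')$-semistable for controlled $a$, hence lie in the bounded families the hypothesis provides); without exploiting it, the finiteness of $\Gamma$ does not follow from the tools you cite.
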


Due to the boundedness result of Corollary \ref{cor:boundednessSegments}, we obtain the following.
\begin{cor}
Let $d$ be an integer with $2 \le d \le n$ and let $\gamma$ be a numerical class of $d$-dimensional coherent sheaves on $X$. Let $h_1, h_2 \in \Amp^1(X)$ be two distinct rational ample classes. Then the points $(h_1^d,\ldots,h_1^0)$ and $(h_2^d,\ldots,h_2^0)$ in $N^d(X)_\R \times \ldots \times N^0(X)_\R$ can be connected by a polygonal chain $L\subset \Amp^d(X)\times\ldots\times\Amp^0(X)$ and there are finitely many rational points $\alpha^{(1)},\ldots, \alpha^{(m)}$ on $L$ accounting for the variation of $\alpha$-semistability for coherent sheaves in the numerical class $\gamma$ for $\alpha \in L$. Moreover, $L$ may be chosen such that for each $i$ with $1\le i\le m$ all components $\alpha_j^{(i)}$,  $d\ge j\ge0$, of the point $\alpha^{(i)}$ are complete intersections of rational classes on the segment $[h_1,h_2]\subset  \Amp^1(X)$.
\end{cor}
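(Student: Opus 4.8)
The plan is to combine the finite wall-and-chamber structure of Proposition \ref{prop:MPT} with the boundedness supplied by Corollary \ref{cor:boundednessSegments}, applied to an explicit polygonal chain that stays on the complete-intersection locus. First I would build the chain $L$ by changing one factor at a time along $[h_1,h_2]$. Concretely, for $k=0,\ldots,d-1$ and $g\in[h_1,h_2]$, let $\alpha^{(k)}(g)$ be the point whose $\Amp^j(X)$-component is $h_2^{\min(j,k)}\,g^{\,\epsilon_{j,k}}\,h_1^{\max(0,\,j-k-1)}$, where $\epsilon_{j,k}=1$ if $j>k$ and $\epsilon_{j,k}=0$ otherwise. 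Each component is a product of exactly $j$ rational ample classes lying on $[h_1,h_2]$, hence lies in $\Amp^j(X)$ by convexity of $\Amp^1(X)$, and it is linear in $g$, so each $\alpha^{(k)}([h_1,h_2])$ is a genuine segment. Since $\alpha^{(0)}(h_1)=(h_1^d,\ldots,h_1^0)$, $\alpha^{(d-1)}(h_2)=(h_2^d,\ldots,h_2^0)$ and $\alpha^{(k)}(h_2)=\alpha^{(k+1)}(h_1)$, these concatenate to a connected polygonal chain $L$ joining the two prescribed points; moreover every point of $L$ is a complete intersection of classes on $[h_1,h_2]$, and rational $g$ gives a complete intersection of \emph{rational} classes on $[h_1,h_2]$.

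Second, I would check the hypothesis of Proposition \ref{prop:MPT} for $K:=L$. As $L$ is compact, $K_d:=\mathrm{proj}_{N^d(X)}(L)\subset\Amp^d(X)$ is compact, while $\mathrm{proj}_{N^{d-1}(X)}(L)$ is by construction a union of segments each moving a single factor along $[h_1,h_2]$, i.e. a set of precisely the type produced in Corollary \ref{cor:boundednessSegments} (take $\alpha=h_1$, $\alpha'=h_2$, all $h_i=h_1$, $h_i'=h_2$). Hence $\mathrm{proj}_{N^d(X)\times N^{d-1}(X)}(L)$ is contained in a bounded stability parameter set for $\gamma$, and a compact subset of a bounded stability parameter set is again one (an $(a,K')$-semistable sheaf with $K'\subseteq K$ is $(a,K)$-semistable). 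Therefore Proposition \ref{prop:MPT} applies and yields a finite family $W_{\gamma,L}$ of rational hypersurfaces such that $\alpha$-semistability of sheaves in $\gamma$ is constant on each connected component of the complement of $\bigcup W_{\gamma,L}$.

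Third, since $W_{\gamma,L}$ is finite and $L$ is a finite union of segments, the trace of $\bigcup W_{\gamma,L}$ on $L$ divides $L$ into finitely many relatively open arcs, on each of which $\alpha$-semistability is constant. On every such arc the rational complete-intersection points (those with rational $g$ on some edge) are dense, so I can select one representative $\alpha^{(i)}$ per arc; these finitely many points are rational, have all components complete intersections of rational classes on $[h_1,h_2]$, and together account for the variation of $\alpha$-semistability along $L$.

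The main obstacle is the interaction of the complete-intersection constraint with the walls. Restricted to a moving edge, a wall arising from a comparison $\deg_{\alpha_i}(F)\deg_{\alpha_d}(E)=\deg_{\alpha_i}(E)\deg_{\alpha_d}(F)$ is bilinear in $(\alpha_i,\alpha_d)$, and since both may vary linearly in $g$ it becomes merely quadratic in $g$; its zeros on the edge need not be rational. This is exactly why one should take the $\alpha^{(i)}$ to be rational complete-intersection \emph{representatives} of the finitely many chambers cut out on $L$, exploiting density of complete intersections on each edge, rather than trying to place marked points on the walls themselves. The remaining point needing care is that the projection of $L$ lands in a bounded stability parameter set so that Proposition \ref{prop:MPT} is applicable, and this is precisely what the one-factor-at-a-time segment construction underlying Corollary \ref{cor:boundednessSegments} guarantees.
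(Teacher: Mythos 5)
Your chain construction, your verification that $\mathrm{proj}_{N^d(X)\times N^{d-1}(X)}(L)$ is contained in (hence is) a bounded stability parameter set via Theorem \ref{thm:boundednessHD}/Corollary \ref{cor:boundednessSegments}, and your appeal to Proposition \ref{prop:MPT} all follow exactly the route the paper intends (the paper offers no argument beyond citing these two results). The genuine gap is in your final step. In this paper, ``accounting for the variation'' has the precise meaning fixed in Proposition \ref{prop:MPT}: semistability must be constant on the connected components of the complement --- here, on the connected components of $L\setminus\{\alpha^{(1)},\ldots,\alpha^{(m)}\}$. So the $\alpha^{(i)}$ must contain the points where the walls actually meet $L$. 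Your $\alpha^{(i)}$ are interior chamber representatives: deleting one interior point from each arc leaves connected components of $L$ that straddle genuine walls, across which semistability changes, so these points do not account for the variation in the required sense. Even the weaker reading (``every semistability notion occurring along $L$ occurs at some $\alpha^{(i)}$'') fails, because semistability at a wall point of $L$ is in general realized \emph{only} on the wall (e.g.\ a direct sum whose summands have equal reduced Hilbert polynomials precisely on the wall is semistable only there), and wall points belong to $L$.

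The obstruction you correctly identified --- restricted to one of your edges, a rational bilinear wall becomes a quadratic in the parameter, whose roots need not be rational --- should be resolved by changing the chain, not by reinterpreting the $\alpha^{(i)}$. Refine $L$ so that along each edge only \emph{one} component $\alpha_j$ of the tuple varies, all other components being held at fixed rational complete intersections of powers of $h_1,h_2$; note that moving $\alpha_j$ affinely from $h_2^k h_1^{j-k}$ to $h_2^{k+1}h_1^{j-k-1}$ is the same as setting $\alpha_j = h_2^k g\, h_1^{j-k-1}$ with $g\in[h_1,h_2]$, so every point of the refined chain still has complete-intersection components, and your boundedness verification goes through verbatim (the projections are finite unions of sets of the form covered by Theorem \ref{thm:boundednessHD}). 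Each wall of Proposition \ref{prop:MPT} is given by a rational form bilinear in a pair $(\alpha_i,\alpha_d)$; on an edge where only one component varies affinely, its restriction is an affine rational function of the parameter, so it either contains the edge or meets it in a single \emph{rational} point, which is then a complete intersection of rational classes on $[h_1,h_2]$. Taking the $\alpha^{(i)}$ to be these finitely many crossing points yields the statement as written. One residual point that any argument using Proposition \ref{prop:MPT} as a black box must address is the degenerate case of a wall containing an entire edge: such points lie on a wall and are therefore outside the scope of Proposition \ref{prop:MPT}; one handles this either by perturbing the chain or by observing, from the proof of the proposition in \cite{MegyPavelToma}, that a comparison form vanishing identically along an edge forces no change of semistability there.
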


 In the particular case when  $d=2$, we have most comprehensive uniform boundedness results, Theorem \ref{thm:boundedness2Dim} and Corollary \ref{cor:UniformBoundednessD=2}. In particular, in this case any compact set  $K\subset \Amp^2(X)\times\Amp^1(X)\times \Amp^0(X)$ fulfills the boundedness condition in Proposition \ref{prop:MPT}. Hence we get the following.

 \begin{cor}\label{cor:wallsD=2}
 Let $d=2$ and let $K\subset \Amp^2(X)\times\Amp^1(X)\times \Amp^0(X)$  be a compact subset. Let $\gamma$ be as above. Then there is a finite set $W_{\gamma,K}$ of  hypersurfaces described by rational bilinear forms on  $N^2(X)_\R\times N^1(X)_\R$ accounting for the variation of $\alpha$-semistability for coherent sheaves in the numerical class $\gamma$ for $\alpha\in K$.
\end{cor}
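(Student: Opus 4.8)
The plan is to obtain this as an essentially formal consequence of Proposition \ref{prop:MPT} (imported from \cite{MegyPavelToma}), whose single nontrivial hypothesis is a boundedness assumption that the two-dimensional results of Section \ref{section:dimension2} now let us verify unconditionally. So the work splits into supplying that hypothesis and then reconciling the ambient space in which the walls live.

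First I would check the hypothesis. Let $\mathrm{pr} : \Amp^2(X) \times \Amp^1(X) \times \Amp^0(X) \to N^2(X) \times N^1(X)$ denote the projection onto the first two factors. Since $K$ is compact and $\mathrm{pr}$ is continuous, the image $\mathrm{pr}(K)$ is a compact subset of $\Amp^2(X) \times \Amp^1(X)$. By Corollary \ref{cor:UniformBoundednessD=2}, \emph{every} compact subset of $\Amp^2(X) \times \Amp^1(X)$ is a bounded stability parameter set for $\gamma$; in particular $\mathrm{pr}(K)$ is one. This is exactly the condition demanded in Proposition \ref{prop:MPT} (the projection on $N^d(X) \times N^{d-1}(X)$ being a bounded stability parameter set, with $d=2$). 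Hence the proposition applies and produces a finite family of hypersurfaces, cut out by rational bilinear forms, governing the variation of $\alpha$-semistability for $\alpha \in K$.

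The one point that needs a small additional observation is that for $d=2$ these walls may be described on $N^2(X)_\R \times N^1(X)_\R$ rather than on the full $N^2(X)_\R \times N^1(X)_\R \times N^0(X)_\R$ supplied by Proposition \ref{prop:MPT}. For this I would inspect the reduced $\alpha$-Hilbert polynomial of a pure quotient $E \to F$: after normalising the leading coefficient, the comparison is governed first by the slope $\mu^{\alpha_1}_{\alpha_2}$, whose wall equation $\deg_{\alpha_1}(F)\deg_{\alpha_2}(E) = \deg_{\alpha_1}(E)\deg_{\alpha_2}(F)$ is bilinear in $(\alpha_2,\alpha_1)$ and involves neither $\alpha_0$ nor higher data. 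When the slopes coincide identically the subleading comparison becomes decisive, and there the positive scalar $\alpha_0 \in N^0(X)_\R$ cancels out of the ratio $\deg_{\alpha_0}(F)/\deg_{\alpha_2}(F)$ against $\deg_{\alpha_0}(E)/\deg_{\alpha_2}(E)$, leaving a condition linear in $\alpha_2$ alone. In either case no genuine dependence on the $N^0$-coordinate survives, so $W_{\gamma,K}$ can be taken inside $N^2(X)_\R \times N^1(X)_\R$.

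I do not expect a genuine obstacle within this corollary: the substance is upstream, in Theorem \ref{thm:boundedness2Dim} and Corollary \ref{cor:UniformBoundednessD=2}, which remove entirely the complete-intersection restrictions required in higher dimensions and thereby make the boundedness hypothesis of Proposition \ref{prop:MPT} available for \emph{arbitrary} compact $K$. The only care needed here is the bookkeeping of the wall description and the verification that the $\alpha_0$-direction contributes no walls.
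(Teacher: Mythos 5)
Your proposal is correct and takes essentially the same route as the paper: the paper likewise observes that the projection of any compact $K\subset \Amp^2(X)\times\Amp^1(X)\times\Amp^0(X)$ to $\Amp^2(X)\times\Amp^1(X)$ is compact, hence a bounded stability parameter set for $\gamma$ by Corollary \ref{cor:UniformBoundednessD=2}, so that Proposition \ref{prop:MPT} applies directly. Your extra bookkeeping showing that the $\Amp^0(X)$-coordinate (a positive scalar multiple of $[X]$) cancels from the subleading comparison and so contributes no walls is left implicit in the paper, and is a correct and worthwhile clarification of why $W_{\gamma,K}$ may be taken on $N^2(X)_\R\times N^1(X)_\R$.
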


This applies in particular to compact sets of the form $\{(\omega^2,\omega,\omega^0)\in N^2(X)_\R\times N^1(X)_\R\times N^0(X)_\R \ | \ \omega\in K_1\}$, where $K_1\subset\Amp^1(X)$ is compact, as considered for instance in \cite[Theorem 7.24]{joyce2021enumerative}. 

Also for $d=2$ Theorem \ref{thm:boundedness2Dim} further allows one to apply the results of \cite{GRT1}, \cite{GrebRossToma-Semicontinuity}, \cite{GRT3}. In particular, in combination with \cite[Corollary 4.4]{GRT3} it gives 

\begin{cor}\label{cor:MasterSpace}
    Let $\gamma$ be a numerical class of $2$-dimensional coherent sheaves on $X$ and $h_1, h_2 \in \Amp^1(X)$ be two  rational ample classes. Then the moduli spaces $M_X^{\gamma, (h_1^2,h_1,h_1^0) ss}$, $M_X^{\gamma, (h_2^2,h_2,h_2^0) ss}$ of Gieseker-semistable  sheaves in $\gamma$ with respect to $h_1$ and $h_2$, respectively, are related by a finite number of Thaddeus flips.
\end{cor}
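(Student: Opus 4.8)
The plan is to deduce this from the variation-of-stability machinery of Greb--Ross--Toma, whose output in \cite[Corollary 4.4]{GRT3} is precisely a finite chain of Thaddeus flips relating the Gieseker moduli spaces attached to two polarizations, provided one has a \emph{single} finite-type family containing all the semistable sheaves met along a path joining those polarizations. The only ingredient of that machinery previously unavailable for pure (rather than torsion-free) two-dimensional sheaves is exactly this uniform boundedness, now supplied by Theorem \ref{thm:boundedness2Dim} and Corollary \ref{cor:UniformBoundednessD=2}. So the corollary is essentially a matter of feeding our boundedness result into the existing construction.

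Concretely, I would first join $h_1$ and $h_2$ by the segment $\ell = [h_1,h_2] \subset \Amp^1(X)$ and form the associated compact set of stability parameters $K = \{(\omega^2,\omega,\omega^0) \mid \omega \in \ell\} \subset \Amp^2(X)\times\Amp^1(X)\times\Amp^0(X)$. Its projection to $\Amp^2(X)\times\Amp^1(X)$ is compact, being the continuous image $\omega \mapsto (\omega^2,\omega)$ of a compact set, so by Corollary \ref{cor:UniformBoundednessD=2} it is a bounded stability parameter set for $\gamma$. In particular the set of isomorphism classes of sheaves of class $\gamma$ that are slope-semistable with respect to some $(\omega^2,\omega)$ with $\omega \in \ell$ is bounded, and \emph{a fortiori} so is the set of all $\omega$-Gieseker-semistable sheaves for $\omega \in \ell$, since Gieseker-semistability implies slope-semistability. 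This is exactly the finite-type bounding family required to run the master-space construction of \cite{GRT1,GrebRossToma-Semicontinuity,GRT3}.

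Next I would invoke Corollary \ref{cor:wallsD=2} (equivalently Proposition \ref{prop:MPT}) to record that along $\ell$ the variation of Gieseker-semistability is governed by a finite set of rational walls, subdividing $\ell$ into finitely many chambers separated by finitely many rational points $\alpha^{(1)},\dots,\alpha^{(m)}$. Across each such wall the boundedness established above lets one build the corresponding finite-type master space and realize the two adjacent chamber moduli spaces as GIT quotients of it for linearizations on opposite sides of the wall; \cite[Corollary 4.4]{GRT3} then identifies the resulting birational transformation as a Thaddeus flip. Chaining these finitely many local flips along $\ell$ yields the asserted finite sequence of Thaddeus flips connecting $M_X^{\gamma,(h_1^2,h_1,h_1^0)ss}$ and $M_X^{\gamma,(h_2^2,h_2,h_2^0)ss}$.

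The main obstacle is not any new geometric input but a compatibility check: one must verify that the boundedness furnished by Theorem \ref{thm:boundedness2Dim} is in precisely the form demanded by \cite[Corollary 4.4]{GRT3}, and that the master-space and variation-of-GIT arguments of \cite{GRT1,GrebRossToma-Semicontinuity,GRT3}, originally formulated for torsion-free sheaves on a smooth base, carry over unchanged to purely two-dimensional sheaves once a single finite-type bounding family is in hand. Since the GRT construction accesses the semistable sheaves only through such a bounding family together with the finite rational wall structure of Proposition \ref{prop:MPT}, both of which are now available in the pure two-dimensional setting, this verification is the sole real content of the argument, and the corollary follows.
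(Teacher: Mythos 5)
Your proposal is correct and takes essentially the same route as the paper, which proves this corollary simply by feeding Theorem \ref{thm:boundedness2Dim} (equivalently Corollary \ref{cor:UniformBoundednessD=2}, applied to a compact set of stability parameters containing those attached to $h_1$ and $h_2$) into the master-space machinery of \cite{GRT1}, \cite{GrebRossToma-Semicontinuity}, \cite{GRT3}, specifically \cite[Corollary 4.4]{GRT3}. The additional detail you spell out --- the segment $[h_1,h_2]$, the finite rational wall structure, and the check that the GRT construction only accesses sheaves through a bounding family --- is exactly what the paper leaves implicit in its one-line justification.
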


\bibliography{bib2021}
\bibliographystyle{amsalpha}

\medskip
\medskip
\begin{center}
\rule{0.4\textwidth}{0.4pt}
\end{center}
\medskip
\medskip
\end{document}